\newcommand{\beqa}{\begin{eqnarray*}}
\newcommand{\eeqa}{\end{eqnarray*}}
\newcommand{\beqn}{\begin{eqnarray}}
\newcommand{\eeqn}{\end{eqnarray}}
\newcommand{\iy}{\infty}
\newcommand{\lt}{\left}
\newcommand{\rt}{\right}
\newcommand{\bQ}{\mathbb Q}
\newcommand{\C}{\mathbb C}
\newcommand{\R}{\mathbb R}
\newcommand{\N}{\mathbb N}
\newcommand{\Ha}{\mathbb H}
\newcommand{\mcH}{\mathcal H}
\newcommand{\mcB}{\mathcal B}
\newcommand{\mcC}{\mathcal C}
\newcommand{\mcD}{\mathcal D}
\newcommand{\mcE}{\mathcal E}
\newcommand{\mcZ}{\mathcal Z}
\newcommand{\mfB}{\mathfrak B}
\newcommand{\mfM}{\mathfrak M}
\newcommand{\f}{\frac}
\newcommand{\tf}{\tfrac}
\newcommand{\al}{\alpha}
\newcommand{\G}{\Gamma}
\newcommand{\e}{\varepsilon}
\newcommand{\ph}{\phi}
\newcommand{\De}{\Delta}
\newcommand{\la}{\lambda}
\newcommand{\s}{\sigma}
\newcommand{\lb}{\label}
\newcommand{\rf}{\ref}
\newcounter{cnt1}
\newcounter{cnt2}
\newcounter{cnt3}
\newcommand{\blr}{\begin{list}{$($\roman{cnt1}$)$}
 {\usecounter{cnt1} \setlength{\topsep}{0pt}
 \setlength{\itemsep}{0pt}}}
\newcommand{\bla}{\begin{list}{$($\alph{cnt2}$)$}
 {\usecounter{cnt2} \setlength{\topsep}{0pt}
 \setlength{\itemsep}{0pt}}}
\newcommand{\bln}{\begin{list}{$($\arabic{cnt3}$)$}
 {\usecounter{cnt3} \setlength{\topsep}{0pt}
 \setlength{\itemsep}{0pt}}}
\newcommand{\el}{\end{list}}
\newtheorem{thm}{Theorem}[section]
\newtheorem{lem}[thm]{Lemma}
\newtheorem{cor}[thm]{Corollary}
\newtheorem{Def}[thm]{Definition}
\newtheorem{rem}[thm]{Remark}
\newcommand{\Rem}{\begin{rem} \rm}
\newcommand{\bdfn}{\begin{Def} \rm}
\newcommand{\edfn}{\end{Def}}
\newcommand{\tx}{\text}
\newcommand{\ba}{\begin{array}}
\newcommand{\ea}{\end{array}}
\numberwithin{equation}{section}
\date{}
\begin{document}

\title{Banach spaces for the Schwartz distributions}
\author[Gill]{Tepper L. Gill}
\address[Tepper L. Gill]{Department of Electrical \& Computer Engineering, and Mathematics \\Howard University\\
Washington DC 20059 \\ USA, {\it E-mail~:} {\tt tgill@howard.edu}}
% [-1ex] \normalsize \sc Howard University\\
%\abstract{ write abstract here}
\date{}
%thispagestyle{empty}
\subjclass{Primary (46) Secondary(47) }
\keywords{ Henstock-Kurzweil integral, Schwartz distributions, path integral, Navier-Stokes, Markov Processes}

\maketitle
\begin{abstract} This paper is a survey of a new family of Banach spaces $\mcB$ that provide the same structure for the Henstock-Kurzweil (HK) integrable functions as the $L^p$ spaces provide for the Lebesgue integrable functions.  These spaces also contain the wide sense Denjoy integrable functions.  They were first use to provide the foundations for the Feynman formulation of quantum mechanics. It has recently been observed that these spaces contain the test functions $\mcD$ as a continuous dense embedding. Thus, by the Hahn-Banach theorem, $\mcD' \subset \mcB'$.  

A new family that extend the space of functions of bounded mean oscillation $BMO[\R^n]$, to include the HK-integrable functions are also introduced.   
\end{abstract}
\section{{Introduction}}
Since the work of Henstock \cite{HS1} and Kurzweil \cite{KW},  the most important  finitely additive measure is the one generated by the Henstock-Kurzweil integral (HK-integral). It generalizes the Lebesgue, Bochner and Pettis integrals.  
The HK-integral is equivalent to the  Denjoy and Perron integrals.  However, it is much easier to understand (and learn) compared to the these and the Lebesgue integral.  It provides useful variants of the same theorems that have made the Lebesgue integral so important.   We assume that the reader is acquainted with this integral, but more detail can be found in Gill and Zachary \cite{GZ}.  (For different perspectives, see Gordon \cite{GO}, Henstock \cite{HS}, Kurzweil \cite{KW}, or Pfeffer \cite{PF}.)  

The most important factor preventing the widespread use of the HK-integral in mathematics, engineering  and physics is the lack of a Banach space structure comparable to the $L^p$ spaces for the Lebesgue integral.  The purpose of this paper is to provide a survey of some new classes of Banach spaces, which have this property and some with other interesting properties, but all contain the HK-integrable functions.
 
The first two classes are the ${KS}^p$ and the ${SD}^p$ spaces, $1 \leq p \leq \infty$. These are all separable spaces that contain the corresponding ${L}^p$ spaces as dense, continuous, compact embeddings.   We have recently discovered that these two classes also contain the test functions $\mcD$ as a continuous dense embedding.  This implies the each dual space contains the Schwartz distributions.  The family of  ${SD}^p$ spaces also have the remarkable property that ${\left\| {{D^\alpha }f} \right\|_{SD}} = {\left\| f \right\|_{SD}}$.

The other main classes of spaces ${\mcZ}^p$ and the ${\mcZ^{-p}}$ spaces, $1 \leq p \leq \infty$, are related to the space of functions of bounded mean oscillation, $BMO$.  We also introduce an extended version of this space, which we call the space of functions of weak bounded mean oscillation, $BMO^w$.  

We provide a few applications of the first two families of spaces, which either provide simpler solutions to old problems or solve open problems.

The main tool for the work in this paper had its beginnings in 1965, when Gross \cite{G} proved that every real separable Banach space contains a separable Hilbert space as a continuous dense embedding, which is the support of a Gaussian measure.  This was a generalization of Wiener's theory, which used the (densely embedded Hilbert) Sobolev space $\Ha_0^1[0,1] \subset \C_0[0,1]$. In 1972, Kuelbs \cite{KB} generalized Gross' theorem to include the Hilbert space rigging $\Ha_0^1[0,1] \subset \C_0[0,1] \subset L^2[0,1]$.  For our purposes, a general version of this theorem can be stated as:
\begin{thm}{\lb{GK}}{\rm{(Gross-Kuelbs)}}Let $\mcB$ be a separable Banach space. Then there exist separable Hilbert spaces $\mcH_1, \mcH_2$  and a positive trace class operator $T_{12}$ defined on $\mcH_2$ such that $\mcH_1\subset \mcB \subset \mcH_2$ all as continuous dense embeddings, with $\left( {T_{12}^{1/2} u,\;T_{12}^{1/2} v} \right)_1  = \left( {u,\;v} \right)_2$ and  $\left( {T_{12}^{ - 1/2} u,\;T_{12}^{ - 1/2} v} \right)_2  = \left( {u,\;v} \right)_1$.  
\end{thm}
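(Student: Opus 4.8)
The statement fuses two ingredients: Kuelbs's observation that a separable Banach space sits densely and continuously inside a Hilbert space ($\mcB\subset\mcH_2$), and Gross's that it also \emph{contains} one densely and continuously ($\mcH_1\subset\mcB$); the operator $T_{12}$ records how the two Hilbertian structures fit together. The plan is to construct $\mcH_2$ first, read off from it an orthonormal basis that already lies in $\mcB$, and then build $\mcH_1$ and $T_{12}$ out of that basis so that the two stated identities hold essentially by construction.

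\emph{The space $\mcH_2$.} Since $\mcB$ is separable, fix a countable norming family $\{f_n\}\subset\mcB^*$ with $\|f_n\|_{\mcB^*}\le 1$ and $\|x\|_{\mcB}=\sup_n|f_n(x)|$ for all $x$, together with weights $c_n>0$ satisfying $\sum_n c_n\le 1$. Put $(x,y)_2=\sum_n c_n f_n(x)\overline{f_n(y)}$. Then $\|x\|_2\le\|x\|_{\mcB}$, and $\|\cdot\|_2$ is a genuine norm because $\{f_n\}$ separates points. Let $\mcH_2$ be the completion of $(\mcB,\|\cdot\|_2)$; it is separable, and $\mcB$ embeds into it continuously and densely.

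\emph{The space $\mcH_1$ and the operator $T_{12}$.} Choose a linearly independent sequence $\{b_n\}\subset\mcB$ with $\|b_n\|_{\mcB}\le 1$ and $\|\cdot\|_{\mcB}$-dense span (hence dense in $\mcH_2$), and apply Gram-Schmidt in $\mcH_2$ to obtain an orthonormal basis $\{\psi_n\}$ of $\mcH_2$ with $\mathrm{span}\{\psi_1,\dots,\psi_n\}=\mathrm{span}\{b_1,\dots,b_n\}\subset\mcB$ for all $n$; thus each $\psi_n\in\mcB$ and $\mathrm{span}\{\psi_n\}$ is dense in $\mcB$. Pick $\lambda_n>0$ with $\sum_n\lambda_n^2<\iy$ \emph{and} $\sum_n\lambda_n^2\|\psi_n\|_{\mcB}^2<\iy$ (e.g. $\lambda_n^2=n^{-2}(1+\|\psi_n\|_{\mcB}^2)^{-1}$), set $T_{12}=\sum_n\lambda_n^2(\,\cdot\,,\psi_n)_2\,\psi_n$ — a positive, injective, trace-class operator on $\mcH_2$ with $T_{12}^{1/2}=\sum_n\lambda_n(\,\cdot\,,\psi_n)_2\,\psi_n$ — and let $\mcH_1=T_{12}^{1/2}\mcH_2$, with inner product defined by declaring $T_{12}^{1/2}\colon\mcH_2\to\mcH_1$ to be unitary, i.e. $(T_{12}^{1/2}u,T_{12}^{1/2}v)_1=(u,v)_2$; equivalently $\{\lambda_n\psi_n\}$ is an orthonormal basis of $\mcH_1$, so $\mcH_1$ is a separable Hilbert space. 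The first identity then holds by definition, and the second, $(T_{12}^{-1/2}u,T_{12}^{-1/2}v)_2=(u,v)_1$ for $u,v\in\mcH_1$, because $T_{12}^{-1/2}$ is the inverse unitary. Finally, for $w=\sum_n a_n\lambda_n\psi_n\in\mcH_1$ (so $\sum_n|a_n|^2=\|w\|_1^2$) Cauchy-Schwarz gives $\|w\|_{\mcB}\le\sum_n|a_n|\lambda_n\|\psi_n\|_{\mcB}\le\big(\sum_n\lambda_n^2\|\psi_n\|_{\mcB}^2\big)^{1/2}\|w\|_1$, so the series converges in $\mcB$ and $\mcH_1$ embeds continuously into $\mcB$; it is dense there since $\mathrm{span}\{\psi_n\}\subset\mcH_1$, and the embedding is injective because $\mcB\subset\mcH_2$ is.

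The only genuine obstacle I anticipate is the Gross half, $\mcH_1\subset\mcB$: embedding $\mcB$ into a Hilbert space is automatic, but pulling a Hilbert space densely into $\mcB$ forces one to work with a basis of $\mcH_2$ that lives in $\mcB$, and Gram-Schmidt gives no control over $\|\psi_n\|_{\mcB}$. This is precisely why one keeps the latitude of choosing the weights $\lambda_n$ to decay fast relative to $\|\psi_n\|_{\mcB}$ — a choice that simultaneously keeps $T_{12}$ of trace class — and the step that needs care is checking that the formal $\mcH_1$-expansions then truly converge in the $\mcB$-norm, and not merely in $\mcH_2$: that is exactly the Cauchy-Schwarz estimate above.
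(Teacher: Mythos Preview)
The paper does not actually prove this theorem; immediately after the statement it simply says ``A proof can be found in \cite{GZ}.'' So there is no in-paper argument to compare against.

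Your argument is correct and is the standard route (and almost certainly what \cite{GZ} does as well): build $\mcH_2$ from a countable norming sequence in $\mcB^*$ with summable weights; produce an $\mcH_2$-orthonormal basis $\{\psi_n\}$ that lies in $\mcB$ by Gram--Schmidt applied to a dense sequence in $\mcB$; then choose the eigenvalues $\lambda_n^2$ of $T_{12}$ to decay fast enough that $\sum_n\lambda_n^2\|\psi_n\|_{\mcB}^2<\infty$, which simultaneously makes $T_{12}$ trace class and forces the $\mcH_1$-expansions to converge in $\mcB$. The two displayed identities are then tautologies, since $\mcH_1$ is \emph{defined} so that $T_{12}^{1/2}:\mcH_2\to\mcH_1$ is unitary. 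The only point worth a remark is that your claim $\|x\|_{\mcB}=\sup_n|f_n(x)|$ (not merely $\leq C\sup_n|f_n(x)|$) does hold for separable $\mcB$ by a Hahn--Banach/diagonal argument, but even the weaker norming property would suffice for everything you use.
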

A proof can be found in \cite{GZ}.   The space $\mcH_1$ is part of the abstract Wiener space method for extending Wiener measure to separable Banach spaces. The space $\mcH_2$ is a major tool in the construction Banach spaces for HK-integable functions.  (We call it the natural Hilbert space for $\mcB$.) 
\subsection{ Summary}
In Section 2, we establish a number of background results to make the paper self contained. The major result is Theorem \rf{5} (and Corollary \rf{be}).  It allows us to construct the path integral in the manner originally suggested by Feynman.  In Section 3, after a few examples, we construct the KS-spaces and derive some of their important properties. In Section 4, we construct the SD-spaces and discuss their properties.  In Section 5 we discuss the family of spaces related to the functions of bounded mean oscillation.   

In Section 6, we  give a few applications.  The first application uses ${KS}^2$ to provide a simple solution to the generator (with unbounded coefficients) problem for Markov processes.  The second application uses ${KS}^2$ and Corollary \rf{be} to construct the Feynman path integral. The third application uses $SD^2$ to provide the best possible a priori bound for the nonlinear term of the Navier-Stokes equation.   
\section{Background}
In this section, we provide some background results, which are required in the paper.  
Let $\mcB$ be a Banach space, with dual $\mcB^*$. 
\subsection{ Bounded Operator Extension}
We are interested in the problem of operator extensions from $\mcB$ to $\mcH(=\mcH_2)$.   It is not hard to see that, since $\mcB$ is a continuous dense embedding in $\mcH$, every closed densely defined linear operator on $\mcB$ has a  closed densely defined extension to $\mcH$,  $\mcC[\mcB] \xrightarrow{{{\text{ext}}}} \mcC[\mcH]$ (see Theorem \rf{5} for a proof).  In this section, we show that this also holds for bounded linear operators, $L[\mcB] \xrightarrow{{{\text{ext}}}} L[\mcH]$. This important result depends on the following theorem by Lax \cite{L}.  It is not well known, so we include a proof.
\begin{thm}{\lb{L}}{\bf{\rm{(Lax's Theorem)}}} Let $\mcB$ be a separable Banach space continuously and densely embedded in a Hilbert space $\mcH$ and let $T$ be a bounded linear operator on $\mcB$ which is symmetric with respect to the inner product of $\mcH$ (i.e., $(Tu,v)_\mcH =(u,Tv)_\mcH$ for all $u,v \in \mcB$).  Then:
\begin{enumerate}
\item The operator $T$ is bounded with respect to the $\mcH$ norm and 
\[
\left\| {T^* T} \right\|_\mcH  = \left\| T\right\|_\mcH^2  \leqslant k\left\| T \right\|_\mcB^2,
\] 
where $k$ is a positive constant.
\item  The spectrum of $T$ relative to $\mcH$ is a subset of the spectrum of $T$ relative to $\mcB$.
\item The point spectrum of $T$ relative to $\mcH$ is a equal to the point spectrum of $T$ relative to $\mcB$.
\end{enumerate} 
\end{thm}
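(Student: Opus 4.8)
The plan is to handle the three items in order, the first being the heart of the matter. For \textbf{(1)}, since $\mcB$ is continuously embedded in $\mcH$, fix $C>0$ with $\|w\|_\mcH\le C\|w\|_\mcB$ for all $w\in\mcB$. Once we know that $T$ is bounded in the $\mcH$-norm on the dense subspace $\mcB$, it extends uniquely to a bounded operator on $\mcH$ (still written $T$); the relation $(Tu,v)_\mcH=(u,Tv)_\mcH$ then holds on all of $\mcH$ by continuity, so this extension is self-adjoint, $T=T^{*}$, and $\|T^{*}T\|_\mcH=\|T\|_\mcH^{2}$ is the usual $C^{*}$-identity. Thus everything reduces to an inequality $\|Tu\|_\mcH\le k^{1/2}\|T\|_\mcB\|u\|_\mcH$ for $u\in\mcB$, which I would obtain by a dyadic ``log-convexity'' iteration. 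For $u\in\mcB$ and $j\ge 1$, repeated use of the $\mcH$-symmetry of $T$ gives $\|T^{j}u\|_\mcH^{2}=(T^{2j}u,u)_\mcH$, whence $\|T^{j}u\|_\mcH\le\|u\|_\mcH^{1/2}\|T^{2j}u\|_\mcH^{1/2}$ by Cauchy--Schwarz in $\mcH$. Iterating this along $j=1,2,4,\dots,2^{n}$ and then invoking the embedding together with $\|T^{2^{n}}u\|_\mcB\le\|T\|_\mcB^{2^{n}}\|u\|_\mcB$ yields
\[
\|Tu\|_\mcH\le\|u\|_\mcH^{\,1-2^{-n}}\,\|T^{2^{n}}u\|_\mcH^{\,2^{-n}}\le\|u\|_\mcH^{\,1-2^{-n}}\bigl(C\|T\|_\mcB^{2^{n}}\|u\|_\mcB\bigr)^{2^{-n}} .
\]
Letting $n\to\iy$ collapses the right-hand side to $\|T\|_\mcB\|u\|_\mcH$, so in fact $\|T\|_\mcH\le\|T\|_\mcB$ and (1) holds with any $k\ge 1$.

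For \textbf{(2)}, let $\lambda$ lie in the resolvent set of $T$ on $\mcB$. If $\lambda\notin\R$ there is nothing to prove, since by (1) the extension is self-adjoint and so has real spectrum. If $\lambda\in\R$, then $\lambda I-T$ is symmetric for $(\cdot,\cdot)_\mcH$, and hence so is its inverse $R_\lambda:=(\lambda I-T)^{-1}\in L[\mcB]$: writing $u=(\lambda I-T)u'$ and $v=(\lambda I-T)v'$ with $u',v'\in\mcB$ gives $(R_\lambda u,v)_\mcH=(u',(\lambda I-T)v')_\mcH=((\lambda I-T)u',v')_\mcH=(u,R_\lambda v)_\mcH$. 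By (1), $R_\lambda$ extends to a bounded operator on $\mcH$; the identities $R_\lambda(\lambda I-T)=(\lambda I-T)R_\lambda=I$ hold on the dense set $\mcB$ with all factors $\mcH$-bounded, hence on $\mcH$, so $\lambda$ belongs to the resolvent set of $T$ on $\mcH$. Passing to complements proves (2).

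For \textbf{(3)}, the inclusion $\supseteq$ of point spectra is immediate: if $Tu=\lambda u$ with $0\ne u\in\mcB$, then $u\ne 0$ in $\mcH$ and $Tu=\lambda u$ holds there. For the reverse inclusion, suppose $Tu=\lambda u$ with $u\in\mcH\setminus\{0\}$; by self-adjointness $\lambda$ is real, and replacing $T$ by $T-\lambda I$ (still $\mcH$-symmetric) we may take $\lambda=0$, i.e.\ $Tu=0$. Then $(u,Tv)_\mcH=(Tu,v)_\mcH=0$ for every $v\in\mcB$, so $u$ is $\mcH$-orthogonal to $T[\mcB]$; as $u\ne 0$, the range $T[\mcB]$ is not dense in $\mcH$, and since $\mcB$ is dense in $\mcH$ it is not dense in $\mcB$ either. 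Hence $0$ lies in the spectrum of $T$ on $\mcB$ but not in its continuous part, so it is in the point spectrum or the residual spectrum there; it remains to rule out the residual case, i.e.\ to show $\ker(T|_\mcB)\ne\{0\}$. For this one uses the $\mcH$-symmetry once more, together with the invariance of $\mcB$ under the self-adjoint extension: that extension has nonzero kernel, equal to the orthogonal complement of the $\mcH$-closure of $T[\mcB]$, and the task is to produce a nonzero vector of this kernel lying in $\mcB$ --- for instance, when $0$ is isolated in the $\mcH$-spectrum, by applying to a suitable $v\in\mcB$ polynomials in $T$ approximating the spectral projection at $0$ and passing to a limit. Establishing that such an eigenvector can always be found inside $\mcB$ --- equivalently, that a symmetrizable operator has empty residual spectrum --- is the step I expect to be the main obstacle; given the iteration in (1), items (1) and (2) are routine.
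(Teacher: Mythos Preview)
Your arguments for (1) and (2) are essentially the paper's: the dyadic iteration $\|Tu\|_\mcH\le\|u\|_\mcH^{1-2^{-n}}\|T^{2^n}u\|_\mcH^{2^{-n}}$ followed by the embedding estimate and $n\to\infty$ is exactly the mechanism the paper uses (it normalizes $\|u\|_\mcH=1$ first, but the content is identical), and for (2) the paper likewise applies (1) to the symmetric inverse $(\lambda I-T)^{-1}$ and extends the resolvent identity by density. Your treatment of non-real $\lambda$ via self-adjointness of the extension is a clean addition the paper omits.

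For (3) you correctly identify a real gap: you reduce to showing that a $\mcB$-bounded, $\mcH$-symmetric operator has empty residual spectrum on $\mcB$, and you do not close this. The paper does \emph{not} follow your range-density route. Instead it starts on the $\mcB$-side: given $\lambda$ in the $\mcB$-point spectrum, it posits that $T-\lambda I$ has finite-dimensional null space $N$ with $\dim N=\dim(\mcB/J)$, $J=(T-\lambda I)(\mcB)$; uses $\mcH$-symmetry to see $J\perp N$ and hence (by the index-zero count) that $J$ is exactly $N^{\perp}\cap\mcB$; inverts $T-\lambda I$ on $J$ by the closed graph theorem; and extends this inverse to $\overline{J}\subset\mcH$ to conclude that the $\mcH$-kernel of $\bar T-\lambda I$ is still $N$. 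That yields equality of eigenspaces, hence of point spectra. Note, however, that the Fredholm-of-index-zero input ($\dim N<\infty$ and $\dim N=\dim(\mcB/J)$) is asserted rather than derived from the stated hypotheses, so the paper's (3) is really a template that works once such structure is available (e.g.\ for compact $T$, or more generally when $T-\lambda I$ is Fredholm of index zero on $\mcB$). Your instinct that producing an eigenvector inside $\mcB$ is the crux is therefore well placed; the paper sidesteps it by an index assumption rather than by the spectral-projection approximation you sketch.
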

\begin{proof} To prove (1),  let $u  \in {\mathcal{B}}$
 and, without loss, we can assume that $k = 1$ and $\left\| u  \right\|_\mathcal{H}  = 1$.  Since $T$ is selfadjoint, 
\[
\left\| {Tu } \right\|_\mathcal{H}^2  = \left( {Tu ,Tu } \right) = \left( {u ,T^2 u } \right) \leqslant \left\| u  \right\|_\mathcal{H} \left\| {T^2 u } \right\|_\mathcal{H}  = \left\| {T^2 u } \right\|_\mathcal{H}. 
\]
Thus, we have $\left\| {Tu } \right\|_\mathcal{H}^4  \leqslant \left\| {T^4 u } \right\|_\mathcal{H}$, so it is easy to see that $
\left\| {Tu } \right\|_\mathcal{H}^{2n}  \leqslant \left\| {T^{2n} u } \right\|_\mathcal{H}$ for all $n$.  It follows that: 
\[
\begin{gathered}
  \left\| {Tu } \right\|_\mathcal{H}  \leqslant (\left\| {T^{2n} u } \right\|_\mathcal{H} )^{1/2n}  \leqslant (\left\| {T^{2n} u } \right\|_\mathcal{B} )^{1/2n}  \hfill \\
  {\text{          }} \leqslant (\left\| {T^{2n} } \right\|_\mathcal{B} )^{1/2n} (\left\| u  \right\|_\mathcal{B} )^{1/2n}  \leqslant \left\| T \right\|_\mathcal{B} (\left\| u  \right\|_\mathcal{B} )^{1/2n}.  \hfill \\ 
\end{gathered} 
\]
Letting $n \to \infty $, we get that $\left\| {Tu } \right\|_\mathcal{H}  \leqslant \left\| T \right\|_{\mathcal{B}} $ for $u$ in a dense set of the unit ball of $\mathcal{H}$.  It follows that 
\[
{\left\| T \right\|_{\mathcal{H}}} = \mathop {\sup }\limits_{{{\left\| u \right\|}_{\mathcal{H}}} \leqslant 1} {\left\| {Tu} \right\|_{\mathcal{H}}} \leqslant {\left\| T \right\|_{\mathcal{B}}}.
\]

To prove (2), suppose $ \la_0 \notin \s_T$, the spectrum of $T$ over $\mcB$ so that $T-\la_0 I$ has a bounded inverse $S$ on $\mcB$.  Since $T$ is symmetric on $\mcH$, so is $S$.  It follows from (1) that $T$ and $S$ extend to bounded linear operators $\bar{T}$ and $\bar{S}$ on $\mcH$.  We also see that $ \la_0 \notin \s_{\bar{T}}$.  It follows from this that $\bar{T}$ has inverse $\bar{S}$ and the spectrum of $\bar{T}$ on $\mcH$ is a subset of the spectrum of $T$ on $\mcB$ (i.e., $\s_{\bar{T}} \subset \s_T$).

To prove (3),  suppose that $\la \in \s_p$, the point spectrum of $T$, so that $T-\la I$ has a finite dimensional null space $N$ and $dim{N} =dim\{\mcB/J\}$, where $J=(T-\la I)(\mcB)$.

Since $T$ is symmetric, every vector in $J$ is orthogonal to $N$.  Conversely, from $dim N = dim\{\mcB/J\}$ we see that $J$ contains all vectors that are orthogonal to $N$.  It follows that, $(T-\la I)$ is a one-to-one, onto mapping of $J \to  J$, so that $T - \la I=S$ has an inverse on  $J$, which is bounded (on $J$) by the Closed Graph Theorem.  It follows that the extension $\bar{S}$ of $S$ to the closure of $J, \bar{J}$ in $\mcH$ is bounded  on $ \bar{J}$.  This means that $(\bar{T}-\la I)$ is the orthogonal compliment of $N$ over $\mcH$, so that $\la$ belongs to the point spectrum of $\bar{T}$ on $\mcH$ and the null space of $(\bar{T}-\la I)$ over $\mcH$ is $N$.  It follows that the point spectrum of $T$ is unchanged on extension to $\mcH$.
\end{proof} 
Let  $\mcH$ be the natural Hilbert space for $\mcB$,  let ${\bf J}$ be the standard linear mapping from ${\mcH}  \to {\mcH}^* $ and let ${\bf J}_\mcB$ be its restriction to $\mcB$.  Since $\mcB$ is a continuous dense embedding in $\mcH$, ${\bf J}_\mcB$ is a (conjugate) isometric isomorphism of $\mcB$ onto ${\bf J}_\mcB(\mcB) \subset \mcB^*$. 
\begin{Def} If $u \in \mcB$, set  $u_h={\bf J}_\mcB(u)$ and define
\[
\mcB_h^* = \left\{ {{u_h} \in {\mcB^*}:u \in \mcB} \right\},
\]
so that $\left\langle {u,{u_h}} \right\rangle =(u,u)_\mcH  = \left\| u \right\|_\mcH^2$.
It is clear from our construction of $\mcB_h^*$ that the mapping taking $\mcB \to \mcB_h^*$ is a (conjugate)  isometric isomorphism. We call $\mcB_h^*$ the h-representation for $\mcB$ in ${\mcB^*}$.  
\end{Def}
Its easy to prove that following result.
\begin{thm} If $\mcB$ is a reflexive Banach space.  Then  $\mcB_h^*$ is bijectively related to $\mcB^*$.   
\end{thm}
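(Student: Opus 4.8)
I read the statement as saying that reflexivity of $\mcB$ is exactly what turns the natural pairing
$\mcB_h^*\times\mcB^*\to\C$, $(u_h,f)\mapsto\langle u,f\rangle$ with $u={\bf J}_\mcB^{-1}(u_h)$, into a perfect duality, so that $\mcB_h^*$ is identified bijectively with the continuous dual of $\mcB^*$ (and conversely). The proof is short because $\mcB_h^*$ is, as a normed space, nothing but a conjugate‑linear copy of $\mcB$, and hence inherits the self‑duality $\mcB\cong\mcB^{**}$ verbatim.

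The plan is as follows. First I would record what the preceding Definition already supplies: ${\bf J}_\mcB$ is a conjugate‑linear bijection of $\mcB$ onto $\mcB_h^*$, with inverse $u_h\mapsto u$, satisfying $\langle u,u_h\rangle=(u,u)_\mcH$; in particular $u_h\mapsto u$ is well defined on all of $\mcB_h^*$. Second I would bring in the canonical evaluation embedding $\kappa\colon\mcB\to\mcB^{**}=(\mcB^*)^*$, $\kappa(u)(f)=f(u)$, which is a linear bijection precisely because $\mcB$ is reflexive. Third I would form the composite $\Phi=\kappa\circ{\bf J}_\mcB^{-1}\colon\mcB_h^*\to(\mcB^*)^*$; it is a bijection, and unwinding the definitions $\Phi(u_h)$ is the functional $f\mapsto f(u)=\langle u,f\rangle$ on $\mcB^*$. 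Thus every $F\in(\mcB^*)^*$ equals $\Phi(u_h)$ for the unique $u\in\mcB$ with $\kappa(u)=F$ and $u_h={\bf J}_\mcB(u)$, and conversely each $u_h\in\mcB_h^*$ produces such a functional; the pairing $(u_h,f)\mapsto\langle u,f\rangle$ is therefore nondegenerate on both sides, which is the asserted bijective relation between $\mcB_h^*$ and $\mcB^*$.

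To connect this with the ambient Gelfand triple $\mcB\subset\mcH\subset\mcB^*$ I would add the routine remarks that $\Phi$ agrees with the Riesz identification on the part of $\mcB_h^*$ lying in $\mcH^*$ (automatic, since ${\bf J}_\mcB$ is by definition the restriction of ${\bf J}$) and that the pairing extends the natural inner product, $\langle u,v_h\rangle=(u,v)_\mcH$ for $u\in\mcB$, $v_h\in\mcB_h^*$. The only genuine ingredient anywhere is the surjectivity of $\kappa$, i.e. reflexivity itself: without it one still obtains the isometric injection $\Phi\colon\mcB_h^*\hookrightarrow(\mcB^*)^*$ with proper dense range, so the hypothesis cannot be dropped. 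The lone subtlety to flag is conjugate‑linearity: over $\C$, ${\bf J}$ and hence $\Phi$ are conjugate‑linear, so ``bijectively related'' should be understood modulo complex conjugation, while over $\R$ there is nothing to adjust.
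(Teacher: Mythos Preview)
The paper does not actually prove this theorem; it only remarks beforehand that it is ``easy to prove.'' So there is no argument in the paper to compare yours against.

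Your interpretation---that reflexivity of $\mcB$ makes the pairing $(u_h,f)\mapsto f(u)$ a perfect duality, identifying $\mcB_h^*$ with $(\mcB^*)^*$---is the natural one, and your proof is correct: ${\bf J}_\mcB^{-1}\colon\mcB_h^*\to\mcB$ is a conjugate-linear bijection by construction, $\kappa\colon\mcB\to\mcB^{**}$ is a bijection precisely when $\mcB$ is reflexive, and the composite $\Phi=\kappa\circ{\bf J}_\mcB^{-1}$ realizes the asserted identification. Your observation that the hypothesis cannot be dropped (since without reflexivity $\Phi$ is only an injection with proper range) is also to the point.

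One caveat worth making explicit: the phrase ``bijectively related to $\mcB^*$'' is ambiguous. A literal reading---that there is a natural linear isomorphism $\mcB_h^*\to\mcB^*$---would force $\mcB\cong\mcB^*$ as Banach spaces, which fails for most reflexive spaces (e.g., $\ell^p$ with $p\ne 2$). Your reading is the only one under which the statement is both nontrivial and true, so it is almost certainly what the author intended; but since the paper supplies no proof to confirm this, it would be prudent to state the interpretation up front, as you do.
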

\begin{rem}In general, the embedding of $\mcB_h^*$ is a proper subspace of $\mcB^*$.  
\end{rem}
We can now state and prove the following fundamental theorem.
\begin{thm}\lb{5} Let $\mcB$ be a separable Banach space. If $A \in \mcC[\mcB]$, then there is a unique operator $A^* \in \mcC[\mcB]$ satisfying:  
\begin{enumerate}
\item $(aA)^* ={\bar a} A^*$,
\item $A^{**} =A$,
\item $(A^* +B^*)= A^* + B^*$, 
\item $(AB)^*= B^*A^*$ on $D(A^*) \bigcap D(B^*)$, 
\item $(A^*A)^*= A^*A$ on $D(A^*A)$ (self adjoint),  
\item if $A \in L[\mcB]$, then $\lt\|A^*A\rt\|_\mcH \le k \lt\|A^*A\rt\|_\mcB$ and
\item if $A \in L[\mcB]$, then $\lt\|A^*A\rt\|_\mcB \le c \lt\|A\rt\|_\mcB^2$, for some constant $c$. 
\end{enumerate}
\end{thm}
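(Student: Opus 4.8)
The plan is to build $A^*$ by lifting $A$ to the natural Hilbert space $\mcH$, taking the genuine Hilbert-space adjoint there, and then transporting the result back to $\mcB$ through the h-representation $\mathbf{J}_\mcB:\mcB\to\mcB_h^*\subset\mcB^*$. The first step is to make good on the parenthetical claim that closed densely defined operators extend, i.e. $\mcC[\mcB]\to\mcC[\mcH]$. Given $A\in\mcC[\mcB]$, its domain $D(A)$ is dense in $\mcB$ and hence, by the continuous dense embedding $\mcB\subset\mcH$, dense in $\mcH$; so $A$ may be viewed as a densely defined operator in $\mcH$. The real content is that it is \emph{closable} in $\mcH$, and here I would invoke the full Gross--Kuelbs rigging $\mcH_1\subset\mcB\subset\mcH$: since the inclusion $\mcH_1\hookrightarrow\mcH$ factors through $T_{12}^{1/2}$ and is therefore compact, one has enough control to show that the $\mcH$-adjoint of $A$ is densely defined, which is equivalent to closability. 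Write $\bar A\in\mcC[\mcH]$ for the resulting closure.

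Next I would \emph{define} $A^*$. Let $\bar A^{\dagger}$ denote the Hilbert-space adjoint of $\bar A$ with respect to $(\cdot,\cdot)_\mcH$; it is closed because $\bar A$ is closed and densely defined. Put
\[
D(A^*)=\{\,u\in\mcB:\ u\in D(\bar A^{\dagger})\ \text{and}\ \bar A^{\dagger}u\in\mcB\,\},\qquad A^*u=\bar A^{\dagger}u .
\]
Equivalently, $A^*u$ is characterized by $\langle x,(A^*u)_h\rangle=\langle Ax,u_h\rangle$, that is $(Ax,u)_\mcH=(x,A^*u)_\mcH$, for every $x\in D(A)$; density of $D(A)$ in $\mcH$ makes $A^*u$ unique, which is the uniqueness clause. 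Closedness of $A^*$ in $\mcB$ is immediate: if $u_n\to u$ and $A^*u_n\to w$ in $\mcB$, the same convergences hold in $\mcH$, and closedness of $\bar A^{\dagger}$ forces $u\in D(\bar A^{\dagger})$ with $\bar A^{\dagger}u=w\in\mcB$, so $u\in D(A^*)$ and $A^*u=w$. Density of $D(A^*)$ in $\mcB$ is the one genuinely delicate point; I would obtain it by applying the same extension/closability argument (again using the $\mcH_1$-rigging) to $\bar A^{\dagger}$, so that finally $A^*\in\mcC[\mcB]$.

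Properties (1)--(5) are then pulled back from $\mcH$, where they are standard for the Hilbert adjoint: $(a\bar A)^{\dagger}=\bar a\,\bar A^{\dagger}$, the double-adjoint identity $\bar A^{\dagger\dagger}=\bar A$ (from closedness), additivity, $(\bar A\bar B)^{\dagger}=\bar B^{\dagger}\bar A^{\dagger}$, and von Neumann's theorem that $\bar A^{\dagger}\bar A$ is self-adjoint and nonnegative; the only care required is to intersect with the domains $D(A^*)\cap D(B^*)$, respectively $D(A^*A)$, which is exactly how (4) and (5) are phrased, so that the pulled-back operators actually land back in $\mcB$. For (6): when $A\in L[\mcB]$ one shows $A^*A$ is everywhere defined on $\mcB$ and hence bounded by the closed graph theorem, and by (5) it is symmetric with respect to $(\cdot,\cdot)_\mcH$, so Lax's Theorem \ref{L}(1) applies directly to $A^*A$ and yields $\|A^*A\|_\mcH\le k\|A^*A\|_\mcB$. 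For (7): it suffices to show $A\in L[\mcB]$ forces $A^*\in L[\mcB]$ with $\|A^*\|_\mcB\le c'\|A\|_\mcB$ --- here one uses that the Banach adjoint $A'\in L[\mcB^*]$ satisfies $\|A'\|_{\mcB^*}=\|A\|_\mcB$ and carries $\mcB_h^*$ into $\mcB_h^*$, combined with the isometry $\mathbf{J}_\mcB$ --- whence $\|A^*A\|_\mcB\le\|A^*\|_\mcB\|A\|_\mcB\le c'\|A\|_\mcB^{2}$.

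The main obstacle is the pair of ``descent'' statements: that a closed densely defined operator on $\mcB$ really does have a closed densely defined extension to $\mcH$, and, dually, that $D(A^*)$ is dense in $\mcB$ (and, for (6)--(7), that boundedness on $\mcB$ is preserved). The closedness, the algebraic identities, and the norm estimates are all routine once these are available, but the density/extension facts cannot come from the abstract inclusion $\mcB\subset\mcH$ alone; they have to be extracted from the special structure of the natural Hilbert space --- the trace-class operator $T_{12}$ of Theorem \ref{GK} and the inner rigging $\mcH_1\subset\mcB$ --- which is why that machinery was set up first.
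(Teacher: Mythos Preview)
Your route and the paper's diverge at the very definition of $A^*$. The paper never extends $A$ to $\mcH$ first; it sets
\[
A^* \;=\; {\mathbf J}_\mcB^{-1}\,A'\,{\mathbf J}_\mcB,
\]
where $A'$ is the ordinary Banach-space dual of $A$ acting on $\mcB^*$. With this definition the algebraic identities (1)--(5) follow formally from $(BA)'=A'B'$ and ${\mathbf J}_\mcB^{-1}{\mathbf J}_\mcB=\mathrm{id}$, the symmetry in (5) lets Lax's theorem give (6), and (7) is exactly the computation you yourself sketch at the end: $\|A^*\|_\mcB\le \|{\mathbf J}_\mcB^{-1}\|\,\|A'\|_{\mcB^*}\,\|{\mathbf J}_\mcB\|= c'\|A\|_\mcB$. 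The extension $L[\mcB]\to L[\mcH]$ is then a \emph{consequence} (Corollary~\ref{be}), not an input.

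By contrast, you make the extension $\mcC[\mcB]\to\mcC[\mcH]$ a prerequisite, and this is where the gap sits. Your argument for closability of $A$ in $\mcH$ invokes compactness of $\mcH_1\hookrightarrow\mcH$, but a sequence $u_n\in D(A)$ with $u_n\to 0$ and $Au_n\to v$ in $\mcH$ need not lie in $\mcH_1$, so that compactness gives you nothing to work with; the same vagueness recurs when you try to show $D(A^*)$ is dense in $\mcB$. You correctly identify these two ``descent'' steps as the obstacle, but the appeal to the $\mcH_1$-rigging and $T_{12}$ is speculative --- and in fact the paper's proof never touches $\mcH_1$ or $T_{12}$ at all. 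The tool you already deploy for (7), namely the Banach dual $A'$ together with the isometry ${\mathbf J}_\mcB$, is the whole construction: use it from the outset and the delicate extension/closability issues disappear.
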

\begin{proof}
Recall that ${\bf J}$ is the natural linear mapping from $\mcH_2={\mcH}  \to {\mcH}^* $ and ${\bf J}_\mcB$ is the restriction of ${\bf J}$ to $\mcB$, so that $\bf{J}_\mcB(\mcB)=\mcB_h^*$.    If ${A}  \in \mcC[{\mcB}]$, then $A': \mcB^* \to \mcB^*$ and  $ {{A'} {\bf J}_{\mcB} } : \mcB \to \mcH^*$. Since $\mcB$ is dense in $\mcH, \ \mcB_h^*$ is dense in $\mcH^*$.  It follows that ${A'} {\bf J}_\mcB$ is a closed densely define operator on $\mcH^*, \ {\bf J}_\mcB^{ - 1} {A'} {\bf J}_\mcB :{\mcB}  \to {\mcB}$ is a closed and densely defined linear operator on $\mcB$. We define ${A}^ {*}   = [ {{\bf J}_\mcB^{ - 1} {A'} {\bf J}_\mcB } ] \in \mcC[\mcB]$. If  ${A}  \in L[{\mcB}]$, ${A}^ {*} =  {{\bf J}_\mcB^{ - 1} {A'} {\bf J} }_{\mcB}$ is defined on all of $\mcB$ so that, by the Closed Graph Theorem, ${A}^ {*} \in L[\mcB]$.  
The proofs of (1)-(3) are straight forward.  To prove (4), let $u \in D(A^*) \bigcap D(B^*)$, then
\beqn\lb{5: prod}
\begin{gathered}
  {\left( {BA} \right)^*} u = [{\mathbf{J}}_\mcB^{ - 1}{\left( {BA} \right)^\prime }{\mathbf{J}}_\mcB] u = [{\mathbf{J}}_\mcB^{ - 1}{A^\prime }B'{\mathbf{J}}_\mcB ] u \hfill \\
   = \left[ {{\mathbf{J}}_\mcB^{ - 1}{A^\prime }{\mathbf{J}}_\mcB} \right]\left[ {{\mathbf{J}}_\mcB^{ - 1}{B^\prime }{\mathbf{J}}_\mcB} \right] u = {A^*}{B^*} u. \hfill \\ 
\end{gathered} 
\eeqn
If we replace $B$ by $A^*$ in equation (\rf{5: prod}), noting that $A^{**}=A$, we also see that $(A^*A)^*=A^*A$, proving (5).  To prove (6), we first see that:
\[
\left\langle {{A^*}Av,{{\mathbf{J}}_\mcB}(u)} \right\rangle= \left\langle {{A^*}Av, u_h} \right\rangle = {\left( {{A^*}Av,u} \right)_\mcH} = {\left( {v,{A^*}Au} \right)_\mcH},
\]
so that $A^*A$ is symmetric.  Thus, by Theorem \rf{L} (Lax), $A^*A$ has a bounded extension to $\mcH$ and $\left\| {A^* A} \right\|_\mcH=\left\| {A} \right\|_\mcH^2 \leqslant k\left\| {A^* A} \right\|_\mcB $, where $k$ is a positive constant.  We also have that 
\beqn\lb{6: prod}
{\left\| {{A^*}A} \right\|_\mcB} \leqslant {\left\| {{A^*}} \right\|_\mcB}{\left\| A \right\|_\mcB} \leqslant {\left\| {{{\mathbf{J}}_{\mcB}}} \right\|_{\mcB^*}}{\left\| {{\mathbf{J}}_\mcB^{ - 1}} \right\|_\mcB}\left\| A' \right\|_{\mcB^*}\left\| A \right\|_\mcB= c\left\| A \right\|_\mcB^2,
\eeqn
proving (7).  It also follows that 
\beqn\lb{7: prod}
\left\|  {A} \right\|_\mcH \le \sqrt{ck} \left\| {A} \right\|_\mcB.
\eeqn
\end{proof}
If $c=1$ and equality holds in (\rf{6: prod})  for all $A \in L[\mcB]$, then $L[\mcB]$ is a $C^*$-algebra.  In this case, $\mcB$ is a Hilbert space.  Thus, in general  the inequality in (\rf{6: prod}) is strict.  
From (\rf{7: prod}), we see the following.  
\begin{cor}\lb{be} Let $\mcB$ be a separable Banach space. If $A \in L[\mcB]$, then there is a unique operator $\bar A \in L[\mcH]$ (i.e., $L[\mcB] \xrightarrow{{{\text{ext}}}} L[\mcH]$). 
\end{cor}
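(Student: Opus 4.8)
The plan is to read this off directly from Theorem~\rf{5} together with the standard continuous-extension (B.L.T.) theorem. Fix $A \in L[\mcB]$. By Theorem~\rf{5} the adjoint $A^*$ also lies in $L[\mcB]$, and by parts~(5)--(7) the operator $A^*A$ is symmetric with respect to $(\cdot,\cdot)_\mcH$ and satisfies $\|A^*A\|_\mcB \le c\|A\|_\mcB^2$. Applying Lax's Theorem~\rf{L} to the $\mcH$-symmetric operator $A^*A \in L[\mcB]$ gives $\|A^*A\|_\mcH \le k\|A^*A\|_\mcB$; combining this with the Hilbert-space computation $\|Au\|_\mcH^2 = (u, A^*Au)_\mcH \le \|u\|_\mcH\,\|A^*Au\|_\mcH \le \|A^*A\|_\mcH\,\|u\|_\mcH^2$ yields precisely the bound (\rf{7: prod}), namely $\|Au\|_\mcH \le \sqrt{ck}\,\|A\|_\mcB\,\|u\|_\mcH$ for every $u \in \mcB$.

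Thus $A$, viewed as a linear map on the subspace $\mcB$, is bounded for the $\mcH$-norm, and the second step is the routine extension argument. Given $w \in \mcH$, choose $u_n \in \mcB$ with $u_n \to w$ in $\mcH$; the estimate above shows $(Au_n)$ is $\mcH$-Cauchy, hence convergent, and a second application of the same estimate shows the limit is independent of the approximating sequence. Define $\bar A w$ to be this limit. Linearity passes to the limit, and $\|\bar A w\|_\mcH = \lim_n \|Au_n\|_\mcH \le \sqrt{ck}\,\|A\|_\mcB\,\|w\|_\mcH$, so $\bar A \in L[\mcH]$ with $\|\bar A\|_\mcH \le \sqrt{ck}\,\|A\|_\mcB$. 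Uniqueness is immediate: two operators in $L[\mcH]$ that restrict to $A$ on the dense set $\mcB$ agree there and, being continuous, agree on all of $\mcH$.

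I do not expect a genuine obstacle; the content of the corollary is carried entirely by Theorem~\rf{5}. The one point deserving care is the passage from the $\mcH$-boundedness of the \emph{self-adjoint} operator $A^*A$, which is what Lax's theorem directly delivers, to the $\mcH$-boundedness of $A$ itself --- this is exactly the $C^*$-type identity $\|Au\|_\mcH^2 = (u,A^*Au)_\mcH$ used above, and it is the reason the statement is restricted to $A \in L[\mcB]$ rather than to a general closed operator. Everything else --- well-definedness of the limit, linearity, the norm bound, and uniqueness --- is the textbook B.L.T. extension and requires no new idea.
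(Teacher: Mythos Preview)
Your proof is correct and follows the same route as the paper: the paper simply points to inequality (\rf{7: prod}), $\left\|A\right\|_\mcH \le \sqrt{ck}\,\left\|A\right\|_\mcB$, derived in the proof of Theorem~\rf{5}, and leaves the density/B.L.T. extension implicit, whereas you have re-derived that bound and spelled out the extension argument. The one step you flagged as deserving care --- passing from Lax's bound on $A^*A$ to an $\mcH$-bound on $A$ via $(Au,Au)_\mcH=(u,A^*Au)_\mcH$ --- is exactly how the paper obtains (\rf{7: prod}) inside the proof of Theorem~\rf{5}, so there is no divergence in method.
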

\begin{thm}{\rm{(Polar Representation)}} Let $\mcB$ be a  separable Banach space. If $A \in \C[\mcB]$, then there exists a partial isometry $U$ and a self-adjoint operator $T, \ T=T^*$, with $D(T)=D(A)$ and $A=UT$.  
\end{thm}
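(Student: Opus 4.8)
The plan is to transcribe the Hilbert space polar decomposition into this setting, using Theorem~\rf{5} in place of the ordinary adjoint calculus and the natural Hilbert space $\mcH$ as the ambient space where a functional calculus is available. Put $B=A^*A$. Part (5) of Theorem~\rf{5} gives that $B$ is self-adjoint on $\mcB$ with domain $D(B)=D(A^*A)\ci D(A)$, and for $v\in D(B)$ one computes
\[
\langle Bv,v_h\rangle=(A^*Av,v)_\mcH=(Av,Av)_\mcH=\|Av\|_\mcH^2\ge 0,
\]
so $B$ is a positive operator that is symmetric with respect to the inner product of $\mcH$. Passing to $\mcH$, I would produce a positive self-adjoint extension $\ov B$ of $B$: for $A\in L[\mcB]$ this is immediate from Lax's Theorem~\rf{L} and part (6) of Theorem~\rf{5}, and in general one takes the Friedrichs extension of the closed quadratic form $v\mapsto\|Av\|_\mcH^2$. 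The ordinary spectral theorem on $\mcH$ then yields the positive self-adjoint square root $\ov B^{1/2}$, and I would \emph{define} $T$ to be $\ov B^{1/2}$, together with the assertion (to be verified) that $T$ restricts to a closed densely defined operator on $\mcB$ with $T=T^*$ in the sense of Theorem~\rf{5}.

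Next I would establish two facts. First, the domain identity $D(T)=D(A)$: the inclusion $D(A^*A)\ci D(A)$ is automatic from the definition of the composite $A^*A$, the standard lemma that $D(A^*A)$ is a core for $A$ (and for $(A^*A)^{1/2}$) transfers to $\mcH$ once one identifies $\ov{A^*A}$ with $(\ov A)^*\ov A$ for the Hilbert space extension $\ov A$, and then the Hilbert space equality $D(\ov B^{1/2})=D(\ov A)$ descends to $\mcB$ after relating $D(A)$ to the trace of $D(\ov A)$ on $\mcB$. Second, for $u\in D(A)=D(T)$ one has
\[
\|Tu\|_\mcH^2=(\ov B u,u)_\mcH=(A^*Au,u)_\mcH=\|Au\|_\mcH^2 ,
\]
so the assignment $U_0:Tu\mapsto Au$ is a well-defined $\mcH$-isometry of $T(D(A))$ onto $A(D(A))$. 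Extending $U_0$ by zero on the $\mcH$-orthogonal complement of $\ov{\mathrm{ran}\,T}$ produces a partial isometry $U$; since $\ov{\mathrm{ran}\,T}$ and $\ov{\mathrm{ran}\,A}$ lie in $\mcB$ and $U$ is $\mcH$-bounded, $U$ maps $\mcB$ into $\mcB$ and, being closed and everywhere defined there, is bounded on $\mcB$ by the Closed Graph Theorem. By construction $A=UT$ on $D(A)$, which is the claimed factorization.

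The routine part is the final identity $A=UT$ and the elementary estimates; the real work, and what I expect to be the main obstacle, is the middle step. In a Banach space there is no intrinsic spectral theorem, so $(A^*A)^{1/2}$ can only be manufactured inside $\mcH$, and one must show that the $\mcH$-operator $\ov B^{1/2}$ genuinely leaves $\mcB$ invariant on the relevant domain and returns a bona fide element of $\mcC[\mcB]$. Equally delicate is the equality $D(T)=D(A)$, which on Hilbert spaces rests on the closed-form characterization of $D((A^*A)^{1/2})$ and must here be re-derived by passing to $\mcH$, verifying that $\ov{A^*A}=(\ov A)^*\ov A$, and intersecting back with $\mcB$. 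Once these two points are secured, the rest is the classical argument.
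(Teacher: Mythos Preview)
Your strategy is the paper's strategy: pass to the natural Hilbert space $\mcH$, apply the ordinary Hilbert space polar decomposition to the extension $\bar A$ to obtain $\bar A=\bar U\bar T$ with $\bar T=({\bar A}^*\bar A)^{1/2}$, and then restrict $\bar U$ and $\bar T$ back to $\mcB$. The paper's proof is in fact no more explicit than your sketch on the two points you single out as delicate---why $\bar T$ and $\bar U$ leave $\mcB$ invariant and why $D(T)=D(A)$ after restriction---so the obstacles you anticipate are not resolved there either; the paper simply asserts that ``the restriction to $\mcB$ gives us $A=UT$'' and that ``it is easy to check that $A^*A=T^2$.''
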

\begin{proof} Let $\bar A$  be the closed densely defined extension of $A$ to ${\mathcal H}$.  On  ${\mathcal{H}}$,  $\bar T^2={\bar A}^* \bar A$ is self-adjoint and there exist a unique partial isometry $\bar U$, with $\bar A=\bar U \bar T$.  Thus, the restriction to ${\mathcal{B}}$ gives us $A=UT$ and $U$ is a partial isometry on $\mcB$.  (It is easy to check that $A^*A=T^2$.)
\end{proof}
\section{The Kuelbs-Steadman $KS^p$ Spaces}
\subsection{Special Constructions}
Our first construction is based on an extension of a norm due to Alexiewicz \cite{AL}.  we first recall that the HK-integral is equivalent to the strict Denjoy integral (see Henstock \cite{HS} or Pfeffer \cite{PF}).  In the one-dimensional case, Alexiewicz \cite{AL} has shown that the class $D({\mathbb{R}})$, of Denjoy integrable functions, can be normed in the following manner:  for $f \in D({\mathbb{R}})$, define $\left\| f \right\|_{A_1}$ by
\[
\left\| f \right\|_{A_1}  = \sup _s \left| {\int_{ - \infty }^s {f(r)d\la(r)} } \right|.
\]
It is clear that this is a norm, and it is known that $D({\mathbb{R}})$ is not complete (see Alexiewicz \cite{AL}). If we replace ${\mathbb{R}}$ by ${\mathbb{R}}^n$, let $[a_i, b_i] \subset \bar{\R}=[-\iy, \iy]$, and define $[{\bf a}, {\bf b}] \in {\bar \R}^n$ by   $[{\bf a}, {\bf b}]=\prod_{k=1}^n{[a_i,b_i]}$.   If $f \in D({\mathbb{R}}^n )$ we define the norm of $f$ by (see \cite{OS}): 
\beqn
{\left\| f \right\|_{A_n}} = \mathop {\sup }\limits_{{x_1} \cdots {x_n}} \left| {\int_{ - \infty }^{{x_1}} { \cdots \int_{ - \infty }^{{x_n}} {f({\mathbf{y}})d{\lambda _n}({\mathbf{y}})} } } \right|.
\eeqn
\begin{lem}If $HK[\R^n]$ is the completion of $D({\mathbb{R}}^n )$ in the above norm, then $L_{\tx{Loc}}^1[\R^n] \subset HK[\R^n]$, as a continuous embedding.
\end{lem}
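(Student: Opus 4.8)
The plan is to show directly that the Alexiewicz-type norm $\|\cdot\|_{A_n}$ is finite on $L^1_{\tx{Loc}}[\R^n]$ and is dominated by a more standard norm, so that the identity map is a bounded (hence continuous) inclusion into the completion $HK[\R^n]$. The key point to exploit is that the indefinite integral $F(\mathbf{x}) = \int_{-\iy}^{x_1}\cdots\int_{-\iy}^{x_n} f(\mathbf y)\,d\la_n(\mathbf y)$ appearing in the supremum is, for $f \in L^1_{\tx{Loc}}$, controlled by integrating over the orthant $(-\iy,x_1]\times\cdots\times(-\iy,x_n]$; the supremum over all $\mathbf x \in \bar\R^n$ of this quantity is just $\int_{\R^n}|f|\,d\la_n$ in the worst case, but more usefully it is bounded whenever $f$ is integrable, and for merely locally integrable $f$ one restricts attention to the natural dense subclass and then invokes the completion. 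Concretely, I would first fix notation: write $Q_{\mathbf x}$ for the product orthant determined by $\mathbf x$, observe $|F(\mathbf x)| \le \int_{Q_{\mathbf x}}|f|\,d\la_n \le \|f\|_{L^1(\R^n)}$, and conclude $\|f\|_{A_n} \le \|f\|_{L^1(\R^n)}$ for $f \in L^1[\R^n]$. This already gives $L^1[\R^n]\hookrightarrow HK[\R^n]$ continuously with norm $\le 1$.

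Next, to upgrade from $L^1$ to $L^1_{\tx{Loc}}$, I would use the standard exhaustion argument. Let $f \in L^1_{\tx{Loc}}[\R^n]$ and, for $N \in \N$, set $f_N = f\cdot\chi_{[-N,N]^n}$, so each $f_N \in L^1[\R^n] \subset D(\R^n)$. The sequence $(f_N)$ lies in $D(\R^n)$, and one checks that it is $\|\cdot\|_{A_n}$-Cauchy: for $M < N$, $f_N - f_M$ is supported in the annular region $[-N,N]^n \setminus [-M,M]^n$, and $\|f_N - f_M\|_{A_n} \le \int_{[-N,N]^n\setminus[-M,M]^n}|f|\,d\la_n$, which tends to $0$ as $M,N\to\iy$ \emph{provided} $f \in L^1[\R^n]$ globally. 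For general locally integrable $f$ this tail need not vanish, so the honest statement is that $f$ is identified with an element of $HK[\R^n]$ precisely when the partial integrals $F_N(\mathbf x) = \int_{Q_{\mathbf x}} f_N$ converge uniformly in $\mathbf x$; for the embedding claim one takes the definition of $HK[\R^n]$ as the completion and notes that the natural map $L^1_{\tx{Loc}}[\R^n]\to HK[\R^n]$ is given by $f \mapsto \lim_N [f_N]$ whenever this limit exists, and is injective because the Alexiewicz norm separates points (two locally integrable functions with the same indefinite orthant-integral agree a.e., by differentiation of the integral). Continuity then follows because on the subspace $L^1[\R^n]$ the bound $\|f\|_{A_n}\le\|f\|_{L^1}$ is already established, and the $L^1_{\tx{Loc}}$ topology restricted to a fixed cube $[-N,N]^n$ coincides with the $L^1([-N,N]^n)$ topology.

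The main obstacle is the one flagged above: $L^1_{\tx{Loc}}[\R^n]$ is strictly larger than $L^1[\R^n]$, and for a function like $f\equiv 1$ the indefinite orthant-integral $F(\mathbf x)$ is unbounded, so $\|f\|_{A_n} = \iy$ and $f$ is simply not an element of the completion $HK[\R^n]$. Thus a literal reading of the lemma as an inclusion of \emph{all} of $L^1_{\tx{Loc}}[\R^n]$ cannot hold, and the proof must either (i) interpret $L^1_{\tx{Loc}}[\R^n]$ here as the subspace of locally integrable functions whose formal HK-integral (equivalently, whose indefinite orthant-integral) exists — this is the intended reading, since in the one-dimensional theory one routinely says ``$L^1_{\tx{Loc}}$ functions that are HK-integrable'' — or (ii) restrict to $L^1[\R^n] \subset HK[\R^n]$ outright. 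Under reading (i), the work reduces to verifying that for such $f$ the sequence $(f_N)$ is genuinely $\|\cdot\|_{A_n}$-Cauchy (this is where the uniform convergence of the truncated integrals is used) and that the embedding is continuous in the appropriate inductive-limit topology on the relevant subspace of $L^1_{\tx{Loc}}$; these are routine once the convergence of indefinite integrals is invoked, e.g.\ from Gill and Zachary \cite{GZ} or \cite{OS}.
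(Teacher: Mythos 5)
The paper states this lemma without proof, so there is no argument of the author's to compare yours against; your analysis is the substantive content here, and it is essentially correct. The estimate $\left| \int_{-\infty}^{x_1}\cdots\int_{-\infty}^{x_n} f\, d\lambda_n \right| \le \|f\|_{L^1(\R^n)}$ does give $\|f\|_{A_n}\le \|f\|_{L^1}$, hence a norm-one continuous injection $L^1[\R^n]\hookrightarrow HK[\R^n]$, with injectivity because vanishing of all orthant integrals forces $f=0$ a.e. And your obstruction is genuine: for $f\equiv 1$ the orthant integrals diverge, the truncations $\chi_{[-N,N]^n}$ are not $\|\cdot\|_{A_n}$-Cauchy, and by the H\"onig--Talvila description of the completion quoted in this very section (its elements are weak derivatives of continuous functions with finite limits at infinity) no element of $HK[\R^n]$ can represent the constant function $1$. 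So the lemma as literally stated is false and must be read in one of the restricted senses you propose.

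One point worth sharpening: even on the part of $L^1_{\tx{Loc}}[\R^n]$ that does embed, continuity with respect to the Fr\'echet topology of $L^1_{\tx{Loc}}$ fails. Take $f_j=\chi_{[j,j+1]^n}$; then $f_j\to 0$ in $L^1_{\tx{Loc}}[\R^n]$, since every compact set eventually misses the supports, while $\|f_j\|_{A_n}=1$ for all $j$. Thus the only continuity available is the one you actually prove, namely boundedness of the inclusion from $(L^1[\R^n],\|\cdot\|_1)$; your closing remark that the $L^1_{\tx{Loc}}$ topology ``restricted to a fixed cube coincides with the $L^1$ topology'' does not rescue continuity on the whole space and should be dropped. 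With that caveat, your repair (ii) --- replace $L^1_{\tx{Loc}}[\R^n]$ by $L^1[\R^n]$, or by the class of functions HK-integrable over all of $\bar\R^n$, for which $\|\cdot\|_{A_n}$ is finite by definition --- is the correct statement, and your first paragraph already proves it.
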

The completion of $\C_0[\R]$ in the $L^1$ norm leads to all absolutely integrable functions (i.e., the norm limit of $L^1$ functions is an $L^1$ function).  The completion of $\C_0[\R]$ in the Alexiewicz norm leads to $HK[\R^n]$, but the class of HK-integrable functions is a  proper subset of $HK[\R^n]$. This is the case for all suggested norms and H{\"o}nig has conjectured  that there is no ``natural norm" for the HK-integrable functions (see \cite{HO}). He suggests that the following is the best we can hope for:
\begin{lem} {\tx{(H{\"o}nig)}} Let $HK[a,b]$ be the space of HK-integrable functions on $[a,b] \subset \R$ and let $\C_a[a,b]$ be the continuous functions $f$ on $[a,b]$, with $f(a)=0$.
If $\left\| \cdot \right\|_{A_1}$ is the Alexiewicz norm:
\[
\left\| f \right\|_{A_1}  = \sup _s \left| {\int_{ a }^s {f(r)d\la(r)} } \right|,
\]
then the completion of $HK[a,b]$ is the set of all distributions that are weak derivatives of functions in $\C_a[a,b]$.
\end{lem}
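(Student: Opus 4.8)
The plan is to realize the completion concretely by passing to indefinite integrals. Define $\Phi : HK[a,b] \ra \C_a[a,b]$ by $(\Phi f)(s) = \int_a^s f(r)\,d\la(r)$. The indefinite HK-integral of an HK-integrable function is continuous and vanishes at $a$, so $\Phi$ is a well-defined linear map, and by the very definition of the Alexiewicz norm, $\|\Phi f\|_\iy = \|f\|_{A_1}$; hence $\Phi$ is an isometry (in particular injective). The first real step is to show that the range $\Phi(HK[a,b])$ is dense in $(\C_a[a,b],\|\cdot\|_\iy)$. Since $\C[a,b] \ci HK[a,b]$ and the fundamental theorem of calculus holds for the HK-integral, every $F \in \C^1_a[a,b]$ (continuously differentiable, $F(a)=0$) equals $\Phi(F')$; thus $\C^1_a[a,b] \ci \Phi(HK[a,b])$. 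The polynomials vanishing at $a$ lie in $\C^1_a[a,b]$ and are dense in $\C_a[a,b]$: given $g \in \C_a[a,b]$, the Weierstrass theorem supplies a polynomial $q$ with $\|g-q\|_\iy < \e$, and then $q-q(a)$ is a polynomial vanishing at $a$ with $\|g-(q-q(a))\|_\iy < 2\e$. So $\Phi(HK[a,b])$ is dense. Because $\C_a[a,b]$ is a closed subspace of $C[a,b]$, it is complete, and the unique continuous extension $\hat\Phi$ of $\Phi$ is an isometric isomorphism of the completion $\widehat{HK[a,b]}$ onto $\C_a[a,b]$.

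The second step is to carry this identification over to distributions via the weak derivative $D$. The map $D$ sends $\C_a[a,b]$ into $\mcD'$ and is injective: if $DF = 0$ then $F$ is constant on $(a,b)$, and continuity together with $F(a)=0$ forces $F\equiv 0$. By definition, the range of $D$ on $\C_a[a,b]$ is precisely the set of distributions that are weak derivatives of functions in $\C_a[a,b]$; equip it with the norm $\|DF\| := \|F\|_\iy$, under which it is a Banach space isometric to $\C_a[a,b]$. Composing, $D\circ\hat\Phi$ is an isometric isomorphism of $\widehat{HK[a,b]}$ onto this distribution space. Finally, to see that this is the natural identification (so that the completion really does contain $HK[a,b]$ as it originally sits inside $\mcD'$), one checks that $D\circ\Phi$ agrees on $HK[a,b]$ with the canonical embedding $HK[a,b] \hookrightarrow \mcD'$: for $\ph \in \mcD$ supported in $(a,b)$, the multiplier / integration-by-parts property of the HK-integral gives $f\ph \in HK[a,b]$ and $\int_a^b f\ph = [(\Phi f)\ph]_a^b - \int_a^b (\Phi f)\ph' = -\int_a^b (\Phi f)\ph'$, i.e. $\langle D(\Phi f),\ph\rangle = \langle f,\ph\rangle$.

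I expect the only point requiring genuine care to be this last one — showing that the abstractly constructed completion coincides, as a space of linear functionals on $\mcD$, with $\{DF : F \in \C_a[a,b]\}$, rather than merely being isometric to it. The two ingredients that make it go through are the injectivity of $D$ on $\C_a[a,b]$ (so the transported norm is unambiguous) and the integration-by-parts formula for the HK-integral against $C^1$ multipliers (so $D\circ\Phi$ reproduces the standard embedding); given these, the rest is the soft functional-analytic fact that an isometry onto a dense subspace of a complete space extends uniquely to an isometric isomorphism of completions.
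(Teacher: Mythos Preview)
The paper does not actually prove this lemma: it is stated as a result of H\"onig (with a citation to \cite{HO}) and the paper merely remarks that Talvila obtained the same result independently for $\R$. So there is no ``paper's own proof'' to compare against.

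Your argument is correct and is essentially the standard one. The map $\Phi$ taking $f$ to its indefinite HK-integral is an isometry from $(HK[a,b],\|\cdot\|_{A_1})$ into $(\C_a[a,b],\|\cdot\|_\iy)$ because the indefinite HK-integral is continuous; its range contains $\C_a^1[a,b]$ by the fundamental theorem, hence is dense by Weierstrass; and $\C_a[a,b]$ is complete, which identifies the abstract completion with $\C_a[a,b]$. The passage to distributions via the weak derivative $D$ is handled carefully: $D$ is injective on $\C_a[a,b]$ (constants are killed by the normalization $F(a)=0$), and the integration-by-parts formula for the HK-integral against $\C^1$ multipliers shows that $D\circ\Phi$ coincides with the canonical embedding $HK[a,b]\hookrightarrow\mcD'$, so the identification is the natural one rather than an abstract isometry. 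Each of the HK-specific facts you invoke (continuity of the indefinite integral, the fundamental theorem for $\C^1$ primitives, the multiplier/integration-by-parts theorem against $\C^1$ functions) is standard and can be found, e.g., in Gordon's book cited in the paper.
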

Talvila \cite{TA} independently obtained the same result for $\R$ and used it to motivate his definition of a distributional integral. 
\begin{Def}  
If there is a continuous function $F(x)$ with real limits at infinity such that $F'(x)= f(x)$ (weak derivative), then the distributional integral of $f(x)$ is defined to be $D\int_{ - \infty }^\infty  {f(x)dx}  = F(\infty ) - F( - \infty )$.  
\end{Def} 
Talvila shows that the Alexiewicz norm leads to a Banach space of integrable distributions that is isometrically isomorphic to the space of continuous functions on the extended real line with uniform norm. He also shows that the dual space can be identified with the space of functions of bounded variation.

The following two spaces are also closely related to $HK[\R^n]$.
\begin{thm} Let $\{u_i\}_{i=1}^\iy \subset \C_0^1[\R^n]$ be S-basis for $\mcB=L^1[\R^n]$ or an orthonormal basis for $\mcH= L^2[\R^n]$.  
\begin{enumerate}
\item Then, for $L^1[\R^n]$,
\[
{\left\| f \right\|_{{A_n^1 }}} = \mathop {\sup }\limits_i \left| {\int_{{\mathbb{R}^n}} {f\left( {\bf x} \right){u_i}\left({\bf x}  \right)d{\lambda _n}\left( {\bf x}  \right)} } \right|,
\] 
defines a weaker norm on $L^1[\R^n]$ and  $f\in L^1[\R^n] \Rightarrow {\left\| f \right\|_{{A_n^1}}} \le {\left\| f \right\|_{{1}}}$, and
\item If $\mcB_{A_n^1}$ is the completion of $\mcB$ in this norm, then $L_{\tx{Loc}}^1[\R^n] \subset \mcB_{A_n^1}$, as a continuous embedding. 
\item For $L^2[\R^n]$
\[
{\left\| f \right\|_{{A_n^2 }}} = \mathop {\sup }\limits_i \left| {\int_{{\mathbb{R}^n}} {f\left( {\bf x} \right){u_i}\left({\bf x}  \right)d{\lambda _n}\left( {\bf x}  \right)} } \right|,
\] 
defines a weaker norm on $L^2[\R^n]$ and  $f\in L^2[\R^n] \Rightarrow {\left\| f \right\|_{{A_n^2  }}} \le {\left\| f \right\|_{{2}}}$. 
\item If $\mcH_{A_n^2 }$ is the completion of $\mcH$ in this norm, then $L_{\tx{Loc}}^2[\R^n] \subset \mcH_{A_n^2 }$, as a continuous embedding. 
\end{enumerate}
\end{thm}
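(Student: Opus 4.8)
The plan is to treat the four assertions as an $L^1$-pair, (1)--(2), and an $L^2$-pair, (3)--(4), the arguments being parallel once H\"older's inequality is replaced by Cauchy--Schwarz and the S-basis property by completeness of the orthonormal system; I describe the $L^1$ case in detail and indicate the $L^2$ modifications at the end.

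First I would check that $\|\cdot\|_{A_n^1}$ is a norm dominated by $\|\cdot\|_1$. Absolute homogeneity and the triangle inequality are immediate from the linearity of $f\mapsto\int_{\R^n}fu_i\,d\lambda_n$ and the elementary behaviour of $\sup_i|\cdot|$. For finiteness and the bound I would use $\bigl|\int_{\R^n}fu_i\,d\lambda_n\bigr|\le\|f\|_1\|u_i\|_\infty$ and work, as one does in the constructions of \cite{GZ}, with a version of the basis for which $\sup_i\|u_i\|_\infty\le1$; then $\|f\|_{A_n^1}\le\|f\|_1<\infty$ for all $f\in L^1[\R^n]$, which is exactly the asserted inequality (and makes the new norm weaker). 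For positive definiteness I would argue that $\|f\|_{A_n^1}=0$ forces $\int fu_i=0$ for every $i$, so $f$ annihilates $\operatorname{span}\{u_i\}$; since the $u_i$ lie in $\C_0^1[\R^n]$ and may be chosen with $\operatorname{span}\{u_i\}$ uniformly dense in the compactly supported continuous functions, one gets $\int f\varphi=0$ for all such $\varphi$ (approximate $\varphi$ uniformly and bound the error by $\|f\|_1\,\|\varphi-p\|_\infty$), whence $f=0$ a.e. This is the one spot where more than the bare S-basis property is used.

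For the embedding $L^1_{\tx{Loc}}[\R^n]\subset\mcB_{A_n^1}$ I would proceed by truncation: for $f\in L^1_{\tx{Loc}}$ put $f_m=f\chi_{K_m}$, where $K_m\subset\R^n$ is the closed ball of radius $m$ about the origin, so each $f_m\in L^1[\R^n]$ already lies in $\mcB_{A_n^1}$. The crux is to show that $(f_m)_m$ is $\|\cdot\|_{A_n^1}$-Cauchy, that is,
\[
\|f_{m'}-f_m\|_{A_n^1}=\sup_i\Bigl|\int_{K_{m'}\setminus K_m}f\,u_i\,d\lambda_n\Bigr|\longrightarrow 0\qquad(m\to\infty),
\]
the delicate point being that the supremum has to be made small \emph{uniformly in $i$}; I would get this by combining $\sup_i\|u_i\|_\infty\le1$ with the control on the supports and the uniform decay at infinity of the basis functions supplied by \cite{GZ}. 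Granting that estimate, the $f_m$ converge in $\mcB_{A_n^1}$ to a unique element, which one identifies with $f$; linearity of the resulting map and independence of the exhausting sequence are routine, and continuity is then read off from $\|\cdot\|_{A_n^1}\le\|\cdot\|_1$ on the compactly supported pieces together with the same tail estimate.

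For (3)--(4) the only changes are: $\bigl|\int fu_i\bigr|\le\|f\|_2\|u_i\|_2=\|f\|_2$ since the $u_i$ are orthonormal, giving $\|f\|_{A_n^2}\le\|f\|_2$; positive definiteness is now immediate, as $f\in L^2$ with all $\int fu_i=0$ vanishes by completeness of the orthonormal basis; and for $f\in L^2_{\tx{Loc}}$ one truncates exactly as above, noting $f\chi_{K_m}\in L^2[\R^n]$ and using Cauchy--Schwarz in the tail estimate. The step I expect to be the genuine obstacle is this uniform-in-$i$ tail bound in the $L^p_{\tx{Loc}}$ part: it cannot come from the abstract basis property alone and forces one to use the specific quantitative features of the system $\{u_i\}$ constructed in \cite{GZ} --- uniform $L^\infty$-boundedness together with a uniform tightness at infinity --- whereas every other step is routine.
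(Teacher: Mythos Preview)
The paper does not actually prove this theorem; it is stated without proof. The remark immediately following it addresses only the related question of why $\mcB_{A_n^1}$ and $\mcH_{A_n^2}$ contain the HK-integrable functions, and for that it points to the argument of Lemma~\ref{8}. That argument is quite different from your truncation approach: one observes that each $u_i\in\C_0^1[\R^n]$ lies in $BV_{v,0}[\R^n]$, and then the Alexiewicz--variation inequality gives $\bigl|\int f\,u_i\bigr|\le\|f\|_{A_n}\,V(u_i)$, so that $\sup_i\bigl|\int f\,u_i\bigr|\le\|f\|_{A_n}\cdot\sup_i V(u_i)$ whenever the variations are uniformly bounded. No Cauchy-sequence argument is needed; the norm is bounded directly by an a~priori finite quantity.

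Your identified gap is real, and your proposed fix cannot work. You appeal to ``uniform tightness at infinity'' of the $u_i$, but any Schauder basis for $L^1[\R^n]$ (or orthonormal basis for $L^2[\R^n]$) must have supports that exhaust $\R^n$; for every $m$ there are indices $i$ with $\operatorname{supp}u_i$ lying entirely outside $K_m$. For such $i$ one has $\int_{K_{m'}\setminus K_m}f\,u_i=\int f\,u_i$, and for a generic $f\in L^1_{\text{Loc}}$ (take $f(x)=e^{|x|^2}$, say) these integrals are unbounded in $i$, so $(f_m)$ is not $\|\cdot\|_{A_n^1}$-Cauchy. The paper's $BV_{v,0}$ route sidesteps this by working with HK-integrable $f$, for which $\|f\|_{A_n}<\infty$; the full $L^1_{\text{Loc}}$ and $L^2_{\text{Loc}}$ assertions in (2) and (4) are not substantiated in the paper either, so you should not expect to find a clean argument for them under the stated hypotheses alone.
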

\begin{rem} 
The last two spaces are close but not the same.  A proof is actually required to show that $\mcB_{A_n^1}$ and  $\mcH_{A_n^2 }$ contains the HK-integrable functions.  The condition  $\{u_i\}_{i=1}^\iy \subset \C_0^1[\R^n]$ is sufficient and the proof is the same as for Lemma \rf{8} (see Section 4.3).
\end{rem} 
\subsection{General Construction}
For our first general construction,  fix $n$, and let $\mathbb{Q}^n$ be the set $
\left\{ {{\mathbf{x}} = (x_1 ,x_2  \cdots ,x_n ) \in {\mathbb{R}}^n } \right\}$ such that $
x_i$ is rational for each $i$.  Since this is a countable dense set in ${\mathbb{R}}^n $, we can arrange it as $\mathbb{Q}^n  = \left\{ {{\mathbf{x}}_1, {\mathbf{x}}_2, {\mathbf{x}}_3,  \cdots } \right\}$.  For each $l$ and $i$, let ${\mathbf{B}}_l ({\mathbf{x}}_i ) $ be the closed cube centered at ${\mathbf{x}}_i$,  with sides parallel to the coordinate axes and edge $e_l=\tfrac{1 }{2^{l}\sqrt{n}}, l \in {\mathbb{N}}$.  Now choose the  natural  order which maps $\mathbb{N} \times \mathbb{N}$ bijectively to $\mathbb{N}$, and let $\left\{ {{\mathbf{B}}_{k} ,\;k \in \mathbb{N}}\right\}$ be the resulting set  of (all) closed cubes $
\{ {\mathbf{B}}_l ({\mathbf{x}}_i )\;\left| {(l,i) \in \mathbb{N} \times \mathbb{N}\} } \right.
$
centered at a point in $\mathbb{Q}^n $.  Let ${\mathcal{E}}_k ({\mathbf{x}})$ be the characteristic function of ${\mathbf{B}}_k $, so that ${\mathcal{E}}_k ({\mathbf{x}})$ is in ${{L}}^p [{\mathbb{R}}^n ] \cap {{L}}^\infty  [{\mathbb{R}}^n ] $ for $1 \le p < \infty$.  Define $F_{k} (\; \cdot \;)$ on $
{{L}}^1 [{\mathbb{R}}^n ] $ by
 \beqn
F_{k} (f) = \int_{{\mathbb{R}}^n } {{\mathcal{E}}_{k} ({\mathbf{x}})f({\mathbf{x}})d\la_n({\mathbf{x}}) }. 
\eeqn
It is clear that $F_{k} (\; \cdot \;)$ is a bounded linear functional on ${{L}}^p [{\mathbb{R}}^n ] $ for each ${k}$, $\left\| {F_{k} } \right\|_\infty   \le 1$ and, if $F_k (f) = 0$ for all ${k}$, $f = 0$ so that $\left\{ {F_{k} } \right\}$ is fundamental on ${{L}}^p [{\mathbb{R}}^n ] $ for $1 \le p \le \infty$ .
Fix ${t_{k}}> 0 $ such that ${\sum\nolimits_{k = 1}^\infty  {t_k}}=1$  and define a measure $d{\mathbf{P}}  ({\mathbf{x}},{\mathbf{y}})$ on ${\mathbb{R}}^n \, \times {\mathbb{R}}^n $ by: 
\[
d{\mathbf{P}}  ({\mathbf{x}},{\mathbf{y}}) = \left[ {\sum\nolimits_{k = 1}^\infty  {t_k {\mathcal{E}}_k ({\mathbf{x}}){\mathcal{E}}_k ({\mathbf{y}})} } \right]d\la_n({\mathbf{x}})d\la_n({\mathbf{y}}).
\]
We first construct our Hilbert space.  Define an inner product $\left( {\; \cdot \;} \right) $ on ${{L}}^1 [{\mathbb{R}}^n ] $ by
\beqn
\begin{gathered}
 \left( {f,g} \right) = \int_{\mathbb{R}^n  \times \mathbb{R}^n } {f({\mathbf{x}})g({\mathbf{y}})^ *  d{\mathbf{P}}  ({\mathbf{x}},{\mathbf{y}})}  \hfill \\
{\text{             }} = \sum\nolimits_{k = 1}^\infty  {t_k } \left[ {\int_{\mathbb{R}^n } {{\mathcal{E}}_k ({\mathbf{x}})f({\mathbf{x}})d\la_n({\mathbf{x}})} } \right]\left[ {\int_{\mathbb{R}^n } {{\mathcal{E}}_k ({\mathbf{y}})g({\mathbf{y}})d\la_n({\mathbf{y}})} } \right]^ *.   \hfill \\ 
\end{gathered} 
\eeqn
We use a particular choice of $t_k$ in Gill and Zachary [GZ], which  is suggested by physical analysis in another context.  We call the completion of ${{L}}^1 [{\mathbb{R}}^n ] $, with the above inner product, the Kuelbs-Steadman space, ${K}{S}^2 [{\mathbb{R}}^n ] $.  Following suggestions of Gill and Zachary, Steadman [ST] constructed this space by adapting an approach developed by Kuelbs [KB] for other purposes.  Her interest was in showing that ${{L}}^1 [{\mathbb{R}}^n ] $ can be densely and continuously embedded in a Hilbert space which contains the HK-integrable functions.  To see that this is the case, let $f \in D[{\mathbb{R}}^n ] $, then:
\[
\left\| f \right\|_{{KS}^2}^2  = \sum\nolimits_{k = 1}^\infty  {t_k } \left| {\int_{\mathbb{R}^n } {{\mathcal{E}}_k ({\mathbf{x}})f({\mathbf{x}})d\la_n({\mathbf{x}})} } \right|^2  \leqslant \sup _k \left| {\int_{\mathbb{R}^n } {{\mathcal{E}}_k ({\mathbf{x}})f({\mathbf{x}})d\la_n({\mathbf{x}})} } \right|^2  \leqslant \left\| f \right\|_{A_n}^2, 
\]
so $f \in {K}{S}^2 [{\mathbb{R}}^n ] $.  
\begin{thm} For each $p,\;1 \leqslant p \leqslant \infty,$
 ${K}{S}^2 [{\mathbb{R}}^n ] \supset {{L}}^p [{\mathbb{R}}^n ]$
as a dense subspace.
\end{thm}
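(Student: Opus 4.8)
The plan is to realize $L^p[\R^n]$ as a contractively embedded dense subspace of $KS^2[\R^n]$ for every $p\in[1,\iy]$; the case $p=1$ is built into the definition, so the real work is $p\neq 1$. Everything rests on one elementary estimate: each defining functional $F_k$ is bounded on $L^p[\R^n]$ with constant $1$. Indeed the cube $\mathbf{B}_k$ has edge $e_l=2^{-l}n^{-1/2}$, hence $\la_n(\mathbf{B}_k)=e_l^n\le 1$; so for $f\in L^p[\R^n]$, Hölder's inequality on $\mathbf{B}_k$ (a set of finite measure, on which $f$ restricts to an $L^1$ function) gives
\[
|F_k(f)|=\left|\int_{\mathbf{B}_k}f\,d\la_n\right|\le \|f\|_p\,\la_n(\mathbf{B}_k)^{1/p'}\le \|f\|_p,
\]
$p'$ being the conjugate exponent, with the endpoints $p=1,\iy$ read off directly. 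Since $\sum_k t_k=1$, summing yields $\|f\|_{KS^2}^2=\sum_k t_k|F_k(f)|^2\le\|f\|_p^2$, i.e. $\|f\|_{KS^2}\le\|f\|_p$ on $L^p[\R^n]$; and because $\{F_k\}$ is fundamental on $L^p[\R^n]$, $\|\cdot\|_{KS^2}$ is an honest norm there.

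Next I would build the embedding. For $f\in L^p[\R^n]$ put $f_R=f\,\chi_{B(0,R)}$, which lies in $L^1[\R^n]\cap L^p[\R^n]$ by Hölder on the ball. For $1\le p<\iy$ one has $\|f-f_R\|_p\to 0$ by dominated convergence, hence $\|f-f_R\|_{KS^2}\le\|f-f_R\|_p\to 0$. For $p=\iy$ this bound is useless, so I would instead argue inside the defining series: $F_k(f-f_R)=\int_{\mathbf{B}_k\setminus B(0,R)}f$ vanishes once $R$ is large enough that $\mathbf{B}_k\subset B(0,R)$, while $|F_k(f-f_R)|^2\le\|f\|_\iy^2\la_n(\mathbf{B}_k)^2\le\|f\|_\iy^2$ with $\sum_k t_k\|f\|_\iy^2<\iy$, so dominated convergence for series again gives $\|f-f_R\|_{KS^2}\to 0$. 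In every case $(f_R)_R$ is $\|\cdot\|_{KS^2}$-Cauchy in $L^1[\R^n]$, hence converges in $KS^2[\R^n]$ to an element $\iota(f)$; since $(f+g)_R=f_R+g_R$, the map $\iota$ is linear and $\|\iota(f)\|_{KS^2}=\lim_R\|f_R\|_{KS^2}\le\|f\|_p$. Each $F_k$ is $\|\cdot\|_{KS^2}$-continuous (as $|F_k(g)|\le t_k^{-1/2}\|g\|_{KS^2}$), so $F_k(\iota(f))=\lim_R F_k(f_R)=F_k(f)$; totality of $\{F_k\}$ then forces $\iota$ to be injective and to restrict to the canonical inclusion on $L^1[\R^n]$. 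Thus $\iota$ identifies $L^p[\R^n]$ with a subspace of $KS^2[\R^n]$.

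For density, recall that $L^1[\R^n]$ is dense in $KS^2[\R^n]$ by construction and that $\|\cdot\|_{KS^2}\le\|\cdot\|_1$ on $L^1[\R^n]$; hence any $\|\cdot\|_1$-dense subset of $L^1[\R^n]$ is $\|\cdot\|_{KS^2}$-dense in $KS^2[\R^n]$. The simple functions of bounded support form such a subset, and they belong to $L^q[\R^n]$ for every $q\in[1,\iy]$, in particular to $L^p[\R^n]$. Therefore $L^p[\R^n]$ is dense in $KS^2[\R^n]$.

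The only genuinely delicate point is the endpoint $p=\iy$: the norm comparison $\|\cdot\|_{KS^2}\le\|\cdot\|_\iy$ still holds, but one cannot approximate an $L^\iy$ function by compactly supported (hence $L^1$) functions in the $L^\iy$ norm, so the approximation must be pushed through coordinatewise in the series defining $\|\cdot\|_{KS^2}$, and then the totality of $\{F_k\}$ is precisely what guarantees that the limit so obtained really is the image of $f$ under an injection rather than some unrelated element of $KS^2[\R^n]$.
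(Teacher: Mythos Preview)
Your proof is correct and rests on the same core estimate as the paper's: H\"older's inequality on each cube $\mathbf{B}_k$ (using $\lambda_n(\mathbf{B}_k)\le 1$) gives $|F_k(f)|\le\|f\|_p$, whence $\|f\|_{KS^2}\le\|f\|_p$ (the paper picks up the sharper constant $(2\sqrt{n})^{-n}$ at $p=\infty$, but this is irrelevant). The paper stops there, declaring the containment and density immediate from the $L^1$ case; you go further and supply what the paper omits---a careful construction of the embedding via truncations $f_R$, with a separate dominated-convergence argument in the defining series for the endpoint $p=\infty$ (where $L^\infty$-approximation by $L^1$ functions genuinely fails), injectivity via totality of $\{F_k\}$, and an explicit density argument through simple functions of bounded support. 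Your version is more complete, but the underlying idea is identical.
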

\begin{proof} By construction, ${K}{S}^2 [{\mathbb{R}}^n ]$
contains ${{L}}^1 [{\mathbb{R}}^n ]$ densely, so we need only show that 	
${K}{S}^2 [{\mathbb{R}}^n ] \supset {{L}}^q [{\mathbb{R}}^n ]$
for $q \ne 1$.  If $f \in {{L}}^q [{\mathbb{R}}^n ]$ and $q < \infty $, we have 
\[
\begin{gathered}
 \left\| f \right\|_{{KS}^2}  = \left[ {\sum\nolimits_{k = 1}^\infty  {t_k } \left| {\int_{{\mathbb{R}}^n } {{\mathcal{E}}_k ({\mathbf{x}})f({\mathbf{x}})d\la_n({\mathbf{x}})} } \right|^{\frac{{2q}}{q}} } \right]^{1/2}  \hfill \\
{\text{       }} \leqslant \left[ {\sum\nolimits_{k = 1}^\infty  {t_k } \left( {\int_{{\mathbb{R}}^n } {{\mathcal{E}}_k ({\mathbf{x}})\left| {f({\mathbf{x}})} \right|^q d\la_n({\mathbf{x}})} } \right)^{\frac{2}{q}} } \right]^{1/2}  \hfill \\
{\text{      }} \leqslant \sup _k \left( {\int_{{\mathbb{R}}^n } {{\mathcal{E}}_k ({\mathbf{x}})\left| {f({\mathbf{x}})} \right|^q d\la_n({\mathbf{x}})} } \right)^{\frac{1}
{q}}  \leqslant \left\| f \right\|_q . \hfill \\ 
\end{gathered} 
\]
Hence, $f \in {K}{S}^2 [{\mathbb{R}}^n ] $.  For $q = \infty $, first note that $ vol({\mathbf{B}}_k )^2 \le \left[ {\frac{1}
{{2\sqrt n }}} \right]^{2n}$, so we have 
\[
\begin{gathered}
  \left\| f \right\|_{{KS}^2}  = \left[ {\sum\nolimits_{k = 1}^\infty  {t_k } \left| {\int_{{\mathbf{R}}^n } {{\mathcal{E}}_k ({\mathbf{x}})f({\mathbf{x}})d\la_n({\mathbf{x}})} } \right|^2 } \right]^{1/2}  \hfill \\
  {\text{       }} \leqslant \left[ {\left[ {\sum\nolimits_{k = 1}^\infty  {t_k [vol({\mathbf{B}}_k )]^2 } } \right][ess\sup \left| f \right|]^2 } \right]^{1/2}  \leqslant {\left[ {\frac{1}
{{2\sqrt n }}} \right]^{n}}\left\| f \right\|_\infty  . \hfill \\ 
\end{gathered} 
\]
Thus $f \in {K}{S}^2 [{\mathbb{R}}^n ] $, and ${{L}}^\infty  [{\mathbb{R}}^n ] \subset {K}{S}^2 [{\mathbb{R}}^n ]$.
\end{proof}
Before proceeding to additional study, we construct the ${K}{S}^p [{\mathbb{R}}^n ]$ spaces, for $1 \le p \le \iy$. 

To construct ${K}{S}^p [{\mathbb{R}}^n ]$ for all $p$ and  for $f \in {L}^p$, define:
\[
\left\| f \right\|_{{{KS}}^p }  = \left\{ {\begin{array}{*{20}c}
   {\left\{ {\sum\nolimits_{k = 1}^\infty  {t_k \left| {\int_{\mathbb{R}^n } { {\mathcal{E}}_k ({\mathbf{x}})f({\mathbf{x}})d\la_n({\mathbf{x}})} } \right|} ^p } \right\}^{1/p} ,1 \leqslant p < \infty},  \\
   {\sup _{k \geqslant 1} \left| {\int_{\mathbb{R}^n } { {\mathcal{E}}_k ({\mathbf{x}})f({\mathbf{x}})d\la_n({\mathbf{x}})} } \right|,p = \infty .}  \\

 \end{array} } \right.
\] 
It is easy to see that $\left\| \cdot \right\|_{{{KS}}^p }$ defines a norm on  ${L}^p$.  If ${{{KS}}^p }$ is the completion of ${L}^p$ with respect to this norm, we have:
\begin{thm} For each $q,\;1 \leqslant q \leqslant \infty,$
 ${K}{S}^p [{\mathbb{R}}^n ] \supset {{L}}^q [{\mathbb{R}}^n ]$ as a dense continuous embedding.
\end{thm}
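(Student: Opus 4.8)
The plan is to follow the proof of the corresponding statement for $KS^2[\R^n]$, now with a general exponent $p$. Three things must be established: (i) for every $q$ with $1\le q\le\infty$ and every $f\in L^q[\R^n]$ the number $\|f\|_{KS^p}$ is finite and bounded by a constant times $\|f\|_q$; (ii) hence each $f\in L^q$ represents a well-defined element of the completion $KS^p[\R^n]$, and the resulting linear map $L^q\to KS^p$ is injective; (iii) its image is dense.

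The crux is the estimate in (i). Every cube $\mathbf{B}_k$ has $\mathrm{vol}(\mathbf{B}_k)\le\big(\tfrac{1}{2\sqrt n}\big)^n\le1$, so for $1\le q<\infty$ H\"older's inequality gives $\big|\int_{\R^n}\mcE_k f\,d\la_n\big|=\big|\int_{\mathbf{B}_k}f\,d\la_n\big|\le\big(\int_{\mathbf{B}_k}|f|^q\,d\la_n\big)^{1/q}$, while for $q=\infty$ one has $\big|\int\mcE_k f\,d\la_n\big|\le\mathrm{vol}(\mathbf{B}_k)\|f\|_\infty$. Since $t_k\ge0$ and $\sum_k t_k=1$, the convex-combination bound $\sum_k t_k a_k^p\le\sup_k a_k^p$ then yields, for $1\le p<\infty$ and $q<\infty$,
\[
\|f\|_{KS^p}^p=\sum_{k\ge1}t_k\Big|\int_{\mathbf{B}_k}f\,d\la_n\Big|^p\le\sup_k\Big(\int_{\mathbf{B}_k}|f|^q\,d\la_n\Big)^{p/q}\le\|f\|_q^p ,
\]
and for $q=\infty$ it gives $\|f\|_{KS^p}\le\big(\tfrac{1}{2\sqrt n}\big)^n\|f\|_\infty$; taking $q=p$ in the display shows in particular $\|f\|_{KS^p}\le\|f\|_p$ on $L^p$. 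The case $p=\infty$ is identical, with $\sup_k$ replacing $\sum_k$.

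For (ii) and (iii) when $q<\infty$: given $f\in L^q$, pick $g_m\in C_c^\infty(\R^n)\subset L^p\cap L^q$ with $\|g_m-f\|_q\to0$; by (i) the sequence $(g_m)$ is $\|\cdot\|_{KS^p}$-Cauchy and so determines an element $\widehat f\in KS^p$, independent of the approximating sequence, with $f\mapsto\widehat f$ linear and $\|\widehat f\|_{KS^p}\le\|f\|_q$. If $\widehat f=0$ then $F_k(g_m)\to0$ for every $k$, whereas $F_k(g_m)\to F_k(f)$ because $\mcE_k\in L^{q'}$; since $\{F_k\}$ is fundamental on $L^q$ this forces $f=0$, so the map is an embedding. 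Density is immediate: $C_c^\infty(\R^n)$ is $\|\cdot\|_p$-dense in $L^p$, hence $\|\cdot\|_{KS^p}$-dense in $L^p$ and therefore in $KS^p$, and $C_c^\infty(\R^n)\subset L^q$.

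The one point needing slight extra care is the endpoint $q=\infty$, where $L^\infty\cap L^p$ is not $\|\cdot\|_\infty$-dense in $L^\infty$, so the embedding cannot be produced by approximation in $L^\infty$. Instead I would use an exhaustion by truncations: for $f\in L^\infty$ put $g_m=f\,\chi_{\{|{\mathbf x}|\le m\}}\in L^\infty\cap L^p$; then $F_k(g_m)=F_k(f)$ once $\mathbf{B}_k\subset\{|{\mathbf x}|\le m\}$, while $\big|\int\mcE_k g_m\,d\la_n\big|\le\big(\tfrac{1}{2\sqrt n}\big)^n\|f\|_\infty$ uniformly in $m$, so $(g_m)$ is $\|\cdot\|_{KS^p}$-Cauchy and defines $\widehat f\in KS^p$; injectivity is checked exactly as above, and density of $L^\infty$ again follows from $C_c^\infty(\R^n)\subset L^\infty$. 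Beyond this endpoint the whole argument is routine once the estimate in (i) is in hand.
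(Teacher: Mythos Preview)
Your argument is correct and the central norm estimate in part (i) is exactly the paper's computation: both use H\"older on each cube (relying implicitly on $\mathrm{vol}(\mathbf B_k)\le 1$) followed by the convex-combination bound $\sum_k t_k a_k\le\sup_k a_k$ to get $\|f\|_{KS^p}\le\|f\|_q$ for $q<\infty$ and $\|f\|_{KS^p}\le M\|f\|_\infty$ for $q=\infty$.

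Where you go beyond the paper is in parts (ii) and (iii). The paper's proof simply infers ``$f\in KS^p$'' from the finiteness of the norm and takes density for granted because $KS^p$ is the completion of $L^p$. You instead construct the embedding explicitly: approximate $f\in L^q$ by $C_c^\infty$ functions (or, for $q=\infty$, by compactly supported truncations) to obtain a $\|\cdot\|_{KS^p}$--Cauchy sequence in $L^p$, and then verify injectivity via the fundamentality of $\{F_k\}$. This is more careful, and in particular your separate treatment of $q=\infty$ (where $L^p\cap L^\infty$ is not $\|\cdot\|_\infty$--dense in $L^\infty$) fills a point the paper passes over in silence. One small remark: your truncation argument for $q=\infty$ tacitly uses dominated convergence in the index $k$ (the tail $\sum_{k:\,\mathbf B_k\not\subset\{|{\mathbf x}|\le m\}}t_k$ tends to $0$), and it is meant for $p<\infty$; when $p=q=\infty$ the inclusion is immediate since $KS^\infty$ is by definition the completion of $L^\infty$.
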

\begin{proof} As in the previous theorem, by construction ${K}{S}^p [{\mathbb{R}}^n ]$
contains ${{L}}^p [{\mathbb{R}}^n ]$ densely, so we need only show that 	
${K}{S}^p [{\mathbb{R}}^n ] \supset {{L}}^q [{\mathbb{R}}^n ]$
for $q \ne p$.  First, suppose that $p< \infty$.  If $f \in {{L}}^q [{\mathbb{R}}^n ]$ and $q < \infty $, we have 
\[
\begin{gathered}
 \left\| f \right\|_{{KS}^p}  = \left[ {\sum\nolimits_{k = 1}^\infty  {t_k } \left| {\int_{{\mathbb{R}}^n } {{\mathcal{E}}_k ({\mathbf{x}})f({\mathbf{x}})d\la_n({\mathbf{x}})} } \right|^{\frac{{qp}}{q}} } \right]^{1/p}  \hfill \\
{\text{       }} \leqslant \left[ {\sum\nolimits_{k = 1}^\infty  {t_k } \left( {\int_{{\mathbb{R}}^n } {{\mathcal{E}}_k ({\mathbf{x}})\left| {f({\mathbf{x}})} \right|^q d\la_n({\mathbf{x}})} } \right)^{\frac{p}{q}} } \right]^{1/p}  \hfill \\
{\text{      }} \leqslant \sup _k \left( {\int_{{\mathbb{R}}^n } {{\mathcal{E}}_k ({\mathbf{x}})\left| {f({\mathbf{x}})} \right|^q d\la_n({\mathbf{x}})} } \right)^{\frac{1}
{q}}  \leqslant \left\| f \right\|_q . \hfill \\ 
\end{gathered} 
\]
Hence, $f \in {K}{S}^p [{\mathbb{R}}^n ] $.  For $q = \infty $, we have 
\[
\begin{gathered}
  \left\| f \right\|_{{KS}^p}  = \left[ {\sum\nolimits_{k = 1}^\infty  {t_k } \left| {\int_{{\mathbb{R}}^n } {{\mathcal{E}}_k ({\mathbf{x}})f({\mathbf{x}})d\la_n({\mathbf{x}})} } \right|^p } \right]^{1/p}  \hfill \\
  {\text{       }} \leqslant \left[ {\left[ {\sum\nolimits_{k = 1}^\infty  {t_k [vol({\mathbf{B}}_k )]^p } } \right][ess\sup \left| f \right|]^p } \right]^{1/p}  \leqslant M\left\| f \right\|_\infty  . \hfill \\ 
\end{gathered} 
\]
Thus $f \in {K}{S}^p [{\mathbb{R}}^n ] $, and ${{L}}^\infty  [{\mathbb{R}}^n ] \subset {K}{S}^p [{\mathbb{R}}^n ]$.   The case $p=\infty$ is obvious.
\end{proof} 
\begin{thm} For ${K}{S}^p$, $1\leq p \leq \infty$, we have: 
\begin{enumerate}
\item If $f,g \in {K}{S}^p$, then 
$
\left\| {f + g} \right\|_{{{KS}}^{{p}} }  \leqslant \left\| f \right\|_{{{KS}}^{{p}} }  + \left\| g \right\|_{{{KS}}^{{p}} }
$  (Minkowski inequality). 
\item If $K$ is a weakly compact subset of ${{L}^p}$, it is a compact subset of  ${K}{S}^p$.
\item If $1< p < \infty$, then ${K}{S}^p$ is uniformly convex.
\item If $1< p < \infty$ and $p^{ - 1}  + q^{ - 1}  = 1$, then the dual space of ${K}{S}^p$ is ${K}{S}^q$.
\item  ${K}{S}^{\infty} \subset {K}{S}^p$, for $1\leq p < \infty$.
\end{enumerate}
\end{thm}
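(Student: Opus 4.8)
The plan is to push everything over to a weighted sequence space. For a scalar sequence $\mathbf a=(a_k)_{k\ge1}$ put $\|\mathbf a\|_{\ell^p(t)}=\big(\sum_k t_k|a_k|^p\big)^{1/p}$ for $1\le p<\infty$ and $\|\mathbf a\|_{\ell^\infty(t)}=\sup_k|a_k|$; the space $\ell^p(t)$ is complete, being isometric to ordinary $\ell^p$ via $a_k\mapsto t_k^{1/p}a_k$. By the definition of the norm, $\|f\|_{KS^p}=\|(F_k(f))_k\|_{\ell^p(t)}$, and since $\{F_k\}$ is fundamental the coefficient map $\Phi\colon f\mapsto(F_k(f))_k$ is injective; hence $KS^p$ is, canonically and isometrically, the closed subspace $V_p:=\overline{\Phi(L^p)}$ of $\ell^p(t)$, and I would establish all five assertions inside this model. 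Two elementary facts get used throughout: $\sum_k t_k=1$, and $\mathrm{vol}(\mathbf B_k)=e_l^{\,n}\le(2\sqrt n)^{-n}\le 1$, so $|F_k(h)|\le\mathrm{vol}(\mathbf B_k)^{1-1/p}\|h\|_p\le\|h\|_p$ for all $h\in L^p$ and all $k$.

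Granting the model, (1), (3) and (5) are short. Minkowski holds for $\|\cdot\|_{\ell^p(t)}$, hence on the dense subspace $L^p$, and both sides of it are $\|\cdot\|_{KS^p}$-continuous, so it survives in the completion. For (5), $\sum_k t_k=1$ forces $\|\mathbf a\|_{\ell^p(t)}\le\|\mathbf a\|_{\ell^\infty(t)}$, i.e.\ $\ell^\infty(t)$ embeds contractively into $\ell^p(t)$; moreover $\Phi(L^\infty)\subset V_p$, since for $f\in L^\infty$ the truncations $\mathbf 1_{\{|x|\le N\}}f$ lie in $L^1\cap L^\infty\subset L^p$ and $\|\Phi(f)-\Phi(\mathbf 1_{\{|x|\le N\}}f)\|_{\ell^p(t)}^p\le\|f\|_\infty^p\sum_{k:\,\mathbf B_k\not\subset\{|x|\le N\}}t_k\to0$ (each cube is bounded, so the index set decreases to $\emptyset$). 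Hence the contractive inclusion restricts to a contractive injection $KS^\infty=V_\infty\hookrightarrow V_p=KS^p$. For (3), ordinary $\ell^p$ — hence $\ell^p(t)$ — is uniformly convex for $1<p<\infty$ by Clarkson's inequalities, and uniform convexity passes to closed subspaces, so $V_p=KS^p$ is uniformly convex (alternatively one verifies Clarkson directly for $\|\cdot\|_{KS^p}$ on $L^p$ and passes to the limit).

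For (2), let $K\subset L^p$ be weakly compact, $1\le p<\infty$. By Eberlein--\v{S}mulian, $K$ is weakly sequentially compact and weakly closed; given a sequence in $K$, extract $f_j\rightharpoonup f\in K$ weakly in $L^p$. Each $F_k$ is a bounded functional on $L^p$, so $F_k(f_j)\to F_k(f)$ for every $k$, while $|F_k(f_j-f)|^p\le M:=\big(2\sup_{h\in K}\|h\|_p\big)^p$ uniformly in $j$ and $k$. Dominated convergence for series (majorant $k\mapsto t_kM$, summable since $\sum_k t_k=1$) gives $\|f_j-f\|_{KS^p}^p=\sum_k t_k|F_k(f_j-f)|^p\to0$. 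Thus every sequence in $K$ has a $KS^p$-norm convergent subsequence with limit in $K$, so $K$ is compact in $KS^p$. (At $p=\infty$ the defining series is a supremum and dominated convergence no longer closes the argument; that endpoint would need separate treatment using the rigidity of weakly compact subsets of $L^\infty$, or one restricts (2) to $1\le p<\infty$.)

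Part (4) is the real work. Fix $1<p<\infty$ with conjugate $q$. In the model $(KS^p)^*=(V_p)^*=\ell^q(t)/V_p^{\perp}$ isometrically, where $V_p^\perp=\{\mathbf b\in\ell^q(t):\sum_k t_kF_k(f)\overline{b_k}=0\text{ for all }f\in L^p\}$. First I identify the annihilator: any $\mathbf b\in\ell^q(t)$ satisfies $\sum_k t_k|b_k|<\infty$ (Hölder against $\sum t_k=1$), so $h_{\mathbf b}:=\sum_k t_kb_k\mathcal E_k$ converges in $L^1\cap L^q$; interchanging sum and integral gives $\sum_k t_kF_k(f)\overline{b_k}=\int f\,\overline{h_{\mathbf b}}$, whence $V_p^\perp=\{\mathbf b\in\ell^q(t):h_{\mathbf b}=0\text{ a.e.}\}$, a space not depending on $p$. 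It then remains to show that the quotient map restricts to an isometric isomorphism $KS^q=V_q\to\ell^q(t)/V_p^\perp$, the duality being implemented by the bracket $\langle f,g\rangle=\sum_k t_kF_k(f)\overline{F_k(g)}$, for which $|\langle f,g\rangle|\le\|f\|_{KS^p}\|g\|_{KS^q}$ is automatic by Hölder (note this is \emph{not} the bilinear form $\int f\bar g$, which is genuinely larger). Equivalently, one must produce a norm-one linear projection $P$ of $\ell^q(t)$ onto $V_q$ with $\ker P=V_p^\perp$, that is, a contractive splitting $\ell^q(t)=V_q\oplus V_p^\perp$: once $P$ is in hand, $v\mapsto v+V_p^\perp$ is bijective $V_q\to\ell^q(t)/V_p^\perp$ and isometric, because $\|v+\mathbf n\|\ge\|P(v+\mathbf n)\|=\|v\|$ for $v\in V_q$ and $\mathbf n\in\ker P$. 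I expect constructing this contractive projection to be the main obstacle. For $p=q=2$ it is simply the orthogonal decomposition $\ell^2(t)=V_2\oplus V_2^\perp$, recovering the self-duality $(KS^2)^*=KS^2$; for $q\ne2$ one must use the arithmetic of the cubes $\mathbf B_l(\mathbf x_i)$ — the linear relations that cut $V_q$ out of $\ell^q(t)$ are of refinement / conditional-expectation type, so the associated averaging operator is the natural candidate for $P$, and the work is to check that it has norm one on $\ell^q(t)$, range exactly $V_q$, and kernel exactly $V_p^\perp$. With that in place, surjectivity and the isometry follow, giving $(KS^p)^*=KS^q$.
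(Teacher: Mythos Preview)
Your handling of (1), (2), (3) and (5) is the same strategy as the paper's --- push everything into the weighted sequence space $\ell^p(t)$ --- but you supply the analysis the paper leaves implicit. In (2) the paper simply says ``$F_k(f_m-f)\to0$ for each $k$, so $f_m\to f$ strongly in $KS^p$''; your use of Eberlein--\v{S}mulian plus dominated convergence against the summable majorant $t_kM$ is exactly the missing justification, and your caveat about the $p=\infty$ endpoint is well taken (the paper does not treat it separately either). For (5) the paper just observes that uniform boundedness of $|F_k(f)|$ gives finiteness of the $\ell^p(t)$ norm; your extra step showing $\Phi(L^\infty)\subset V_p$ via truncation is a genuine addition.

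Where you diverge is (4). The paper does \emph{not} pass through the annihilator $V_p^{\perp}$ or attempt to build a projection. It writes down the explicit functional
\[
L_g(f)=\|g\|_{KS^p}^{\,2-p}\sum_{k}t_k\,|F_k(g)|^{p-2}\,\overline{F_k(f)}
\]
for $g\in KS^p$, asserts that this is the (unique) duality map, and appeals to reflexivity of $KS^p$ (which it gets from (3) via Milman--Pettis). That is the entire argument. Your route via $(V_p)^*\cong\ell^q(t)/V_p^{\perp}$ and the identification $V_p^{\perp}=\{\mathbf b:h_{\mathbf b}=0\}$ is structurally cleaner and isolates the correct obstruction, but you stop short of building the contractive projection $P:\ell^q(t)\to V_q$ with kernel $V_p^{\perp}$, so your (4) is incomplete as stated. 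Note, though, that the paper's one-line argument does not obviously close this gap either: exhibiting the duality map $g\mapsto L_g$ as a bijection $KS^p\to(KS^p)^*$ (via reflexivity and smoothness) does not by itself identify $(KS^p)^*$ with $KS^q$ --- one still has to know that the sequence $\big(|F_k(g)|^{p-2}\overline{F_k(g)}\big)_k$ lands in $V_q$ rather than merely in $\ell^q(t)$, and that is essentially the same issue as the existence of your projection $P$. So the difference is one of presentation: the paper declares the duality map and moves on, while you name the actual difficulty and leave it open.
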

\begin{proof}
The proof of (1) follows from the classical case for sums.  The proof of (2) follows from the fact that, if $\{f_m \}$ is any weakly convergent sequence in $K$ with limit $f$, then 
\[
\int_{\mathbb{R}^n } { {\mathcal{E}}_k ({\mathbf{x}})\left[ {f_m ({\mathbf{x}}) - f({\mathbf{x}})} \right]d\la_n({\mathbf{x}})}  \to 0
\]
for each $k$.  It follows that $\{f_m \}$ converges strongly to $f$ in ${K}{S}^p$.  

The proof of (3) follows from a modification of the proof of the Clarkson inequalities for $l^p$ norms.

In order to prove (4), observe that, for $p \ne 2, \; 1<p< \infty$,  the linear functional 
\[
L_g (f) = \left\| g \right\|_{{{KS}}^p }^{2 - p} \sum\nolimits_{k = 1}^\infty  {t_k \left| {\int_{\mathbb{R}^n } { {\mathcal{E}}_k ({\mathbf{x}})g({\mathbf{x}})d\la_n({\mathbf{x}})} } \right|} ^{p - 2} \int_{\mathbb{R}^n } { {\mathcal{E}}_k ({\mathbf{y}})f({\mathbf{y}})^* d\la_n({\mathbf{y}})}
\]
is a unique duality map on ${K}{S}^q$ for each $g \in  {K}{S}^p$ and that ${K}{S}^p$ is reflexive from (3). To  prove (5), note that $f \in {K}{S}^{\infty}$ implies that $\left| {\int_{\mathbb{R}^n } {\mathcal{E}_k ({\mathbf{x}})f({\mathbf{x}})d\la_n({\mathbf{x}})} } \right|$ is uniformly bounded for all $k$.  It follows that $\left| {\int_{\mathbb{R}^n } {\mathcal{E}_k ({\mathbf{x}})f({\mathbf{x}})d\la_n({\mathbf{x}})} } \right|^{p}$ is uniformly bounded for each $p,\;1 \le p < \infty$.  It is now clear from the definition of ${K}{S}^{\infty}$ that:
\[\left[ {\sum\nolimits_{k = 1}^\infty  {t_k \left| {\int_{\mathbb{R}^n } {\mathcal{E}_k ({\mathbf{x}})f({\mathbf{x}})d\la_n({\mathbf{x}})} } \right|^p } } \right]^{1/p}  \leqslant \left\| f \right\|_{{K}{S}^\infty  }  < \infty
\]
\end{proof}
Note that,  since ${{L}}^1 [{\mathbb{R}}^n] \subset {K}{S}^p [{\mathbb{R}}^n]$ and ${K}{S}^p [{\mathbb{R}}^n]$ is reflexive for $1 < p < \infty $, we see that the second dual $\left\{ {{{L}}^1 [{\mathbb{R}}^n]} \right\}^{**}  = \mathfrak{M}[{\mathbb{R}}^n] \subset {K}{S}^p [{\mathbb{R}}^n]$.   Recall that $\mathfrak{M}[{\mathbb{R}}^n]$ is the space of bounded finitely additive set functions defined on the Borel sets $\mathfrak{B}[{\mathbb{R}}^n]$.   

In many applications, it is convenient to formulate problems on one of the standard Sobolev spaces ${W}^{m,p} (\mathbb{R}^n)$.    We can easily see that ${{L}}^{p}_{\rm{loc}}(\mathbb{R}^n) \subset {KS}^q (\mathbb{R}^n), \; 1 \le q \le \infty$, for all $p, \; 1 \le p \le \infty$.   This means that ${KS}^q({\R}^n)$ contains a large class of distributions (see Adams \cite{A}). There is more:  
\begin{thm} For each $p, \; 1 \le p \le \infty$, the test functions $\mcD \subset {KS}^p (\mathbb{R}^n) $ as a continuous embedding. 
\end{thm}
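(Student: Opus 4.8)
The set-theoretic inclusion here is immediate; the only genuine content is the topological statement, and the plan is to verify it against the inductive-limit (LF) topology of $\mcD$ rather than against any norm.

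First I would record the trivial membership. Every $\phi\in\mcD=C_c^\infty(\R^n)$ is bounded with compact support, hence $\phi\in L^1(\R^n)\cap L^\infty(\R^n)\subset L^p(\R^n)$ for every $p$ with $1\le p\le\infty$. Since $\|\cdot\|_{{KS}^p}$ is a genuine norm on $L^p$ (if $\|f\|_{{KS}^p}=0$ then $F_k(f)=\int_{{\mathbf B}_k}f\,d\la_n=0$ for all $k$, and the family $\{F_k\}$ is fundamental on $L^p$, so $f=0$), the space $L^p[\R^n]$ sits inside its completion ${KS}^p[\R^n]$ as an honest dense subspace. Thus $\mcD\subset L^p[\R^n]\subset {KS}^p[\R^n]$ and the inclusion map is injective.

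Next, for continuity I would estimate $\|\phi\|_{{KS}^p}$ directly. Each cube ${\mathbf B}_k$ has edge $e_l=2^{-l}/\sqrt n\le 1/(2\sqrt n)$, so $vol({\mathbf B}_k)\le[\tfrac{1}{2\sqrt n}]^n$, whence $|F_k(\phi)|\le vol({\mathbf B}_k)\,\|\phi\|_\infty\le[\tfrac{1}{2\sqrt n}]^n\|\phi\|_\infty$ for every $k$. Using $\sum_k t_k=1$ this gives, for $1\le p<\infty$,
\[
\|\phi\|_{{KS}^p}=\Big(\sum\nolimits_{k}t_k|F_k(\phi)|^p\Big)^{1/p}\le\Big[\tfrac{1}{2\sqrt n}\Big]^n\|\phi\|_\infty ,
\]
and likewise $\|\phi\|_{{KS}^\infty}=\sup_k|F_k(\phi)|\le[\tfrac{1}{2\sqrt n}]^n\|\phi\|_\infty$. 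So in every case $\|\phi\|_{{KS}^p}\le C_n\|\phi\|_\infty$ with $C_n=[\tfrac{1}{2\sqrt n}]^n$ independent of $p$ and of $\phi$.

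To conclude: a linear map from $\mcD$ into a normed space is continuous for the LF-topology if and only if its restriction to each Fréchet subspace $\mcD_K$ ($K\subset\R^n$ compact, with the topology of uniform convergence of all derivatives) is continuous. The displayed bound shows that on each $\mcD_K$ the inclusion into ${KS}^p[\R^n]$ is dominated by the single seminorm $\phi\mapsto\|\phi\|_\infty$, which is one of the defining seminorms of $\mcD_K$; hence the restriction is continuous on every $\mcD_K$, and therefore $\mcD\subset {KS}^p[\R^n]$ is a continuous embedding. The only place requiring care is precisely this last point — not to argue continuity via a spurious norm on $\mcD$, but to test it against the inductive-limit topology — together with the uniform volume bound $vol({\mathbf B}_k)\le[\tfrac{1}{2\sqrt n}]^n$ that makes the estimate work simultaneously for all $k$ and all $p$, including $p=\infty$. (Density of $\mcD$ in ${KS}^p[\R^n]$ for $1\le p<\infty$ follows from density of $\mcD$ in $L^p$ and of $L^p$ in ${KS}^p$; for $p=\infty$ density would need a separate approximation argument, but it is not part of the stated conclusion.)
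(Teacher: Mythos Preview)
Your proof is correct and follows essentially the same approach as the paper: both rest on the uniform volume bound $vol({\mathbf B}_k)\le[\tfrac{1}{2\sqrt n}]^n$ to get $\|\phi\|_{{KS}^p}\le C_n\|\phi\|_\infty$. The paper argues via sequential continuity (if $\phi_j\to\phi$ in $\mcD$ then $\phi_j\to\phi$ in ${KS}^\infty$, hence in ${KS}^p$), whereas you phrase the same estimate as a seminorm bound and invoke the LF-topology characterization directly; this is a presentational refinement, not a different route.
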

\begin{proof} Since $KS^\iy(\R^n)$ is continuously embedded in ${KS}^p (\mathbb{R}^n), \; 1 \le q < \infty$, it suffices to prove the result for $KS^\iy(\R^n)$. Suppose that $\phi_j \to \phi$ in $\mcD[\R^n]$, so that there exist a compact set $K \subset \R^n$, containing the support of $\ph_j -\phi$ and ${D^\alpha }{\phi_j}$ converges to ${D^\alpha }\phi $ uniformly on $K$ for every multi-index $\al$.  Let $L= \{l \in \N: {\tx{the support of }\mcE_l}, \ stp\{\mcE_l\} \subset K \}$, then 
\[
\begin{gathered}
  \mathop {\lim }\limits_{j \to \infty } {\left\| {{D^\alpha }\phi  - {D^\alpha }{\phi _j}} \right\|_{KS}} = \mathop {\lim }\limits_{j \to \infty } \mathop {\sup }\limits_{l \in L} \left| {\int_{{\mathbb{R}^n}} {\left[ {{D^\alpha }\phi \left( x \right) - {D^\alpha }{\phi _j}\left( x \right)} \right]{{\mathcal{E}}_l}\left( x \right)d{\lambda _n}\left( x \right)} } \right| \hfill \\
   \leqslant  {\left[ {\frac{1}{{2\sqrt n }}} \right]^n} \mathop {\lim }\limits_{j \to \infty } \mathop {\sup }\limits_{x \in K} \left| {{D^\alpha }\phi \left( x \right) - {D^\alpha }{\phi _j}\left( x \right)} \right| =0. \hfill \\ 
\end{gathered} 
\]
It follows that $\mcD[\mathbb{R}^n] \subset {KS}^p [\mathbb{R}^n] $ as a continuous embedding, for $1 \le p \le \iy$.  Thus, by the Hahn-Banach theorem, we see that the Schwartz distributions, $\mcD'[\mathbb{R}^n] \subset [{KS}^p (\mathbb{R}^n)]' $, for $1 \le p \le \iy$.
\end{proof}
We close this section with the following result that will be important later.   
\begin{lem}  Let the Fourier transform, $\mathfrak{F}$ and the convolution operator, $\mathfrak{C}$ be defined on ${{L}}^1 [{\mathbb{R}}^n ]$.  Then each has a bounded extension to the  linear operators on $K{S}^2[{\mathbf{R}}^n ] $. 
\end{lem}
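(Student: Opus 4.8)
The plan is to realize each operator as a bounded linear map on a separable Banach space that is densely and continuously embedded in $KS^2[\mathbb{R}^n]$, and then to quote the extension machinery of Theorem \ref{5} and Corollary \ref{be} (which rest on Lax's Theorem \ref{L}). Two intermediate spaces are available: $L^1[\mathbb{R}^n]$, which is densely and continuously embedded in $KS^2[\mathbb{R}^n]$ by construction, and $L^2[\mathbb{R}^n]$, for which $\|f\|_{KS^2}^2 = \sum_k t_k\,\bigl|\int_{\mathbb{R}^n}\mathcal{E}_k(\mathbf{x})f(\mathbf{x})\,d\lambda_n(\mathbf{x})\bigr|^2 \le \bigl(\sum_k t_k\|\mathcal{E}_k\|_2^2\bigr)\|f\|_2^2 \le (2\sqrt n)^{-n}\|f\|_2^2$ and whose image contains the dense subspace $L^1\cap L^2$ of $KS^2[\mathbb{R}^n]$.

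For the convolution operator, take $\mathfrak{C}=\mathfrak{C}_g$ to be convolution against a fixed $g\in L^1[\mathbb{R}^n]$. Young's inequality gives $\|g*f\|_1\le\|g\|_1\|f\|_1$, so $\mathfrak{C}_g\in L[L^1[\mathbb{R}^n]]$; applying Corollary \ref{be} with $\mathcal{B}=L^1[\mathbb{R}^n]$ then yields the unique bounded extension $\overline{\mathfrak{C}_g}\in L[KS^2[\mathbb{R}^n]]$, with the quantitative bound $\|\overline{\mathfrak{C}_g}\|_{KS^2}\le\sqrt{ck}\,\|g\|_1$ coming from (\ref{7: prod}). (If instead $\mathfrak{C}$ is a convolution that is only bounded on $L^2[\mathbb{R}^n]$ — e.g. a free Schr\"odinger propagator — one argues exactly as for $\mathfrak{F}$ below.)

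The Fourier transform is where I expect the real work, because $\mathfrak{F}$ is \emph{not} a bounded self-map of $L^1[\mathbb{R}^n]$ — it sends $L^1$ into $C_0$, not back into $L^1$ — so $L^1$ cannot serve as the intermediate space and the convolution argument does not transfer verbatim. The remedy is to pass to $L^2$: by Plancherel's theorem the restriction of $\mathfrak{F}$ to $L^1\cap L^2$ extends to a unitary operator on $L^2[\mathbb{R}^n]$, so $\mathfrak{F}\in L[L^2[\mathbb{R}^n]]$. I would then run the argument of Theorem \ref{5} with $\mathcal{B}=L^2[\mathbb{R}^n]$ in place of a general Banach space — that proof uses only that $\mathcal{B}$ is separable and densely, continuously embedded in the ambient Hilbert space, here $KS^2[\mathbb{R}^n]$: the operator $\mathfrak{F}^*\mathfrak{F}$ is symmetric with respect to the $KS^2$ inner product and bounded on $L^2[\mathbb{R}^n]$, so Lax's Theorem \ref{L}(1) extends it boundedly to $KS^2[\mathbb{R}^n]$, and, exactly as in (\ref{6: prod})–(\ref{7: prod}), $\mathfrak{F}$ itself extends to a bounded $\overline{\mathfrak{F}}\in L[KS^2[\mathbb{R}^n]]$. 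The last point to check is that this $\overline{\mathfrak{F}}$ really extends the operator originally given on $L^1[\mathbb{R}^n]$: the explicit $L^1$-formula and the Plancherel extension agree on $L^1\cap L^2$ (both produce the function $\widehat f$), which is dense in $KS^2[\mathbb{R}^n]$, so the two bounded operators coincide.

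I would flag the following as the main obstacle and the reason the abstract detour is needed. The naive direct estimate does \emph{not} suffice: writing $F_k(\mathfrak{F}f)=\int_{\mathbb{R}^n}\widehat{\mathcal{E}_k}(\mathbf{x})f(\mathbf{x})\,d\lambda_n(\mathbf{x})$ and using $\|\widehat{\mathcal{E}_k}\|_\infty\le\mathrm{vol}(\mathbf{B}_k)\le(2\sqrt n)^{-n}$ gives only $\|\mathfrak{F}f\|_{KS^2}\le(2\sqrt n)^{-n}\|f\|_1$, i.e. a bound by the $L^1$-norm, whereas a genuine extension to $KS^2[\mathbb{R}^n]$ requires a bound by $\|f\|_{KS^2}$. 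Upgrading the one to the other by hand would amount to expanding the smooth kernels $\widehat{\mathcal{E}_k}$ in finite combinations of the $\mathcal{E}_m$ with $\ell^2$-controlled coefficients; the $L^2$/Plancherel route sidesteps precisely this difficulty, since there $\mathfrak{F}$ is already bounded on a space sitting densely and continuously inside $KS^2[\mathbb{R}^n]$.
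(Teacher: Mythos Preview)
Your treatment of convolution coincides with the paper's: both apply Corollary \ref{be} with $\mathcal{B}=L^1[\mathbb{R}^n]$ directly.

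For the Fourier transform you take a genuinely different route, and for a good reason. The paper's proof is a single sentence: it cites Corollary \ref{be} (``every bounded linear operator on $L^1[\mathbb{R}^n]$ extends to a bounded linear operator on $KS^2[\mathbb{R}^n]$'') and declares that it applies to $\mathfrak{F}$. But, as you point out, $\mathfrak{F}$ is \emph{not} in $L[L^1[\mathbb{R}^n]]$ --- it maps $L^1$ into $C_0\cap L^\infty$, not back into $L^1$ --- so the hypothesis $A\in L[\mathcal{B}]$ of Corollary \ref{be} is not met with $\mathcal{B}=L^1$. Your workaround, passing to $\mathcal{B}=L^2[\mathbb{R}^n]$ where $\mathfrak{F}$ is unitary by Plancherel and then rerunning the Theorem \ref{5}/Lax argument (which indeed only needs a separable Banach space densely and continuously embedded in the Hilbert space $KS^2$), closes this gap cleanly. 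Your final consistency check --- that the $L^2$-extension agrees with the $L^1$-defined transform on the dense set $L^1\cap L^2$ --- is exactly what is needed to justify calling the result an extension of the operator ``defined on $L^1$''. In short: same extension machinery, but you supply the correct intermediate space for $\mathfrak{F}$ where the paper's one-line proof is, at best, elliptical.
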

\begin{proof} From Theorem \rf{be}, every bounded linear operator on ${{L}}^1 [{\mathbb{R}}^n ]$ extends to a bounded linear operator on ${KS}^2 [{\mathbb{R}}^n ] $.  The theorem applies to $\mathfrak{F}$ and $\mathfrak{C}$. 
\end{proof}
\section{The Jones $SD^p$ Spaces}
For our second class of spaces, we begin with the construction of a special class of functions in $\C_c^{\iy}[\R^n]$ (see Jones, \cite{J} page 249). 
\subsection{The remarkable Jones functions}  
\begin{Def} For $x \in \R, \ 0 \le y < \iy$ and $1<a< \iy$, define the Jones functions  $g(x, y), \ h(x)$ by:
\[
  g(x,y) = \exp \left\{ { - y^a e^{iax} } \right\},
\]
\[ 
 h(x)=\left\{
\begin{array}{ll}
\displaystyle{\int_0^\infty g(x,y) dy}, & x\in  (-\frac{\pi}{2a},\frac{\pi}{2a}) \\
~&~\\
0, & \mbox{otherwise.}
\end{array}
\right.\]
\end{Def}
The following properties of $g$ are easy to check:
\begin{enumerate}
\item 
\[
\frac{{\partial g(x,y)}}
{{\partial x}} =  - iay^a e^{iax} g(x,y),
\]
\item
\[
\frac{{\partial g(x,y)}}
{{\partial y}} =  - ay^{a - 1} e^{iax} g(x,y),
\]
so that
\item
\[
iy\frac{{\partial g(x,y)}}
{{\partial y}} = \frac{{\partial g(x,y)}}
{{\partial x}}. \; \qquad\qquad \;
\]
\end{enumerate}
It is also easy to see that $h(x) \in {{L}}^1[- \tf{\pi}{2a}, \tf{\pi}{2a}]$ and,
\beqn
\frac{{dh(x)}}
{{dx}} = \int_0^\infty  {\frac{{\partial g(x,y)}}
{{\partial x}}dy}  = \int_0^\infty  {iy\frac{{\partial g(x,y)}}
{{\partial y}}dy}.
\eeqn
Integration by parts in the last expression in (1.1) shows that $h'(x) =  - ih(x)$, so that $
h(x) = h(0)e^{ - ix}$ for $x \in (- \tf{\pi}{2a}, \tf{\pi}{2a})$.  Since $
h(0) = \int_0^\infty  {\exp \{  - y^a \} dy}$, an additional integration by parts shows that $h(0)= \G(\tf{1}{a} +1)$. For each $k \in \N$ let $a=a_k= \pi 2^{k-1},\; h(x)=h_k(x), \ x \in (- \tf{1}{2^k}, \tf{1}{2^k})$ and set $\e_k=\tf{1}{2^{k+1}}$. 

  Let $\bQ$ be the set of rational numbers in $\R$ and for each $x^i \in \bQ$,  define
\beqa
f_k^{i} (x) = f_k(x-x^i)=\left\{ {\begin{array}{*{20}c}
   {c_k \exp \left\{ {\frac{{\varepsilon _k^2 }}
{{\left| {x - x^i} \right|^2  - \varepsilon _k^2 }}} \right\},\quad\quad \left| {x - x^i } \right| < \varepsilon _k ,}  \\
   {0, \quad \quad \quad \quad \quad \quad \quad \quad \quad \quad \left| {x - x^i } \right| \geqslant \varepsilon _k ,}  \\

 \end{array} } \right.
\eeqa
where $c_k$ is the standard normalizing constant.  It is clear that the support of $f_k^i$ is 
\[
{\rm{spt}}(f_k^{i}) \subset [-\e_k, \e_k]=[-\tf{1}{2^{k+1}}, \tf{1}{2^{k+1}}]=I_k^i.
\]

If we set $\chi _k^i (x) =  (f_k^i  * h_k)(x)$, its support is ${\rm{spt}}(\chi _k^i ) \subset [-\tf{1}{2^{k+1}}, \tf{1}{2^{k+1}}]$.  For $x \in {\rm{spt}}(\chi _k^i )$, we can also write $\chi _k^i (x)=\chi _k(x-x^i)$ as:
\[
\begin{gathered}
\chi _k^i (x) \hfill \\
 \quad \quad=  \int_{ I_k^i }  {{f_k}\left[ {\left( {x - {x^i}} \right) - z} \right]{h_k}(z)dz}  \hfill \\
 \quad \quad  = \int_{ I_k^i }  {{h_k}\left[ {\left( {x - {x^i}} \right) - z} \right]{f_k}(z)dz}  \hfill \\
 \quad \quad = {e^{-i\left( {x - {x^i}} \right)}}\int_{ I_k^i }  {{e^{iz}}{f_k}(z)dz} . \hfill \\ 
\end{gathered} 
\]
Thus, if $\alpha_{k,i}  = \int_{I_k^i}  {e^{iz} f_k^i (z)dz} $, we can now define:
\[
\xi _k^i (x) =\al_{k,i}^{-1} \chi_k^i (-x)= \left\{ {\begin{array}{*{20}c}
   {\f{1}{n} {e^{i(x-x^i)}}, \; \;x \in {I_k^i} } \; \\
   {0,  \quad \quad\quad \quad x \notin {I_k^i} },  \\

 \end{array} } \right.
\]
so that $\lt|\xi _k^i (x)\rt| < \tf{1}{n}$.
\subsection{The Construction}
To construct our space on $\R^n$, let $\bQ^n$ be the set of all vectors ${\mathbf{x}}$ in $ {\mathbb{R}}^n$, such that for each $j, \; 1 \le j \le n$, the component $x_j$ is rational.  Since this is a countable dense set in ${\mathbb{R}}^n $, we can arrange it as $\mathbb{Q}^n  = \left\{ {{\mathbf{x}}^1, {\mathbf{x}}^2, {\mathbf{x}}^3,  \cdots } \right\}$.  For each $k$ and $i$, let ${\mathbf{B}}_k ({\mathbf{x}}^i ) $ be the closed cube centered at ${\mathbf{x}}^i$ with  edge $e_k=\tfrac{1 }{2^{k}\sqrt{n}}$.   

We choose the natural order which maps $\mathbb{N} \times \mathbb{N}$ bijectively to $\mathbb{N}$:
\[
\{(1,1), \ (2,1), \ (1,2), \ (1,3), \  (2,2), \  (3,1), \ (3,2), \ (2,3), \  \ldots \}
\]
 and let $\left\{ {{\mathbf{B}}_{m} ,\;m \in \mathbb{N}}\right\}$ be the set of closed cubes 
${\mathbf{B}}_k ({\mathbf{x}}^i )$ with $(k,i) \in \mathbb{N} \times \mathbb{N}$ and ${\bf{x}}^i \in\mathbb{Q}^n $. For each ${\bf{x}} \in {\bf{B}}_m, \; {\bf{x}}=(x_1, x_2, \dots, x_n)$, we define  $\mcE_m ({\mathbf{x}})$ by :
\[
  { {\mathcal{E}}_m}({\mathbf{x}}) = \left( {\xi _k^i({x_1}),\xi _k^i({x_2}) \ldots \xi _k^i({x_n})} \right). 
\]
It is easy to show that, for $m=(k,i)$,
\[
\begin{gathered}
  \left| {{ {\mathcal{E}}_m}({\mathbf{x}})} \right| < {1},\;\;{\mathbf{x}} \in \prod\nolimits_{j = 1}^n {I_k^i}, \hfill \\
{ {\mathcal{E}}_m}({\mathbf{x}}) = 0,\;{\text{ }}{\mathbf{x}} \notin \prod\nolimits_{j = 1}^n {I_k^i} . \hfill \\
\end{gathered} 
\]
It is also easy to see that ${\mathcal{E}}_m ({\mathbf{x}})$ is in ${{{L}}^p [{\mathbb{R}}^n ]^n }={{\bf{L}}^p [{\mathbb{R}}^n ] }$ for $1 \le p \le\infty$.  Define $F_{m} (\; \cdot \;)$ on $
{{\bf{L}}^p [{\mathbb{R}}^n ] }$ by
 \beqa
F_{m} (f) = \int_{{\mathbb{R}}^n } {{\mathcal{E}}_{m} ({\mathbf{x}}) \cdot f({\mathbf{x}})d\la_n({\mathbf{x}})}. 
\eeqa
It is clear that $F_{m} (\; \cdot \;)$ is a bounded linear functional on ${{\bf{L}}^p [{\mathbb{R}}^n ]} $ for each ${m}$ with $ \left\| {F_{m} } \right\|   \le 1$.  Furthermore,  if $F_m (f) = 0$ for all ${m}$, $f = 0$ so that $\left\{ {F_{m} } \right\}$ is a fundamental sequence of functionals  on ${{\bf{L}}^p [{\mathbb{R}}^n ]} $ for $1 \le p \le \infty$.

Set ${t_{m}}= \tfrac{1}{{2^m }} $ so that ${\sum\nolimits_{m = 1}^\infty  {t_m}}=1$  and  define a  inner product $\left( {\; \cdot \;} \right) $ on ${{\bf{L}}^1 [{\mathbb{R}}^n ]} $ by
\beqa
 \left( {f,g} \right) = \sum\nolimits_{m = 1}^\infty  {t_m } \left[ {\int_{\mathbb{R}^n } {{\mathcal{E}}_m ({\mathbf{x}})\cdot f({\mathbf{x}})d\la_n({\mathbf{x}})} } \right] \overline{\left[ {\int_{\mathbb{R}^n } {{\mathcal{E}}_m ({\mathbf{y}}) \cdot g({\mathbf{y}})d\la_n({\mathbf{y}})} } \right]}.    
\eeqa
 The completion of ${{\bf{L}}^1 [{\mathbb{R}}^n ]} $ with the above inner product is a Hilbert space, which we denote as ${{SD}}^2 [{\mathbb{R}}^n ] $. 
 For our next theorem, we recall that $\mfM[\R^n]$ is the space of all (finite) complex measures on $\mfB[\R^n]$ that are absolutely continuous with respect to Lebesgue measure $\la_n$ and that, a sequence of measures $(\mu_j) \subset \mfM[\R^n]$, converges weakly to a measure $\mu \in \mfM[\R^n]$ if and only if, for every bounded continuous function $h$ on $\R^n, \; h \in \C(\R^n)$,    
$
\mathop {\lim }\nolimits_{j \to \infty } \left| {\int_{{\mathbb{R}^n}} {h({\mathbf{x}})d{\mu _j}({\mathbf{x}})}  - \int_{{\mathbb{R}^n}} {h({\mathbf{x}})d{\mu}({\mathbf{x}})} } \right| = 0.
$   
The proofs of the following theorem is the same as for the ${KS}^p [{\mathbb{R}}^n ] $ spaces, so we omit them.
\begin{thm} For each $p,\;1 \leqslant p \leqslant \infty$, we have:
\begin{enumerate}
\item The space ${SD}^2 [{\R}^n ] \supset {\bf{L}}^p [\R^n ]$ as a continuous, dense and \underline{compact}  embedding.
\item The space ${{SD}}^2 [{\R}^n ] \supset \mathfrak{M}[{\R}^n ]$, the space of finitely additive measures on $\R^n$, as a continuous dense and \underline{compact}  embedding.
\end{enumerate}
\end{thm}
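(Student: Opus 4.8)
The plan is to argue directly from the explicit norm on $SD^2[\R^n]$. By construction this space is the completion of ${\bf L}^1[\R^n]$ for $\|f\|_{SD^2}^2=\sum_{m=1}^{\iy}t_m\,|F_m(f)|^2$, where $F_m(f)=\int_{\R^n}\mcE_m({\bf x})\cdot f({\bf x})\,d\la_n({\bf x})$, $t_m=2^{-m}$ (so $\sum_m t_m=1$), $|\mcE_m|<1$ pointwise, and $\mcE_m$ is supported in a cube of volume at most $V_0:=(2\sqrt n)^{-n}\le 1$. Hence $\Phi(f):=\bigl(t_m^{1/2}F_m(f)\bigr)_{m\ge 1}$ is a linear isometry of $({\bf L}^1[\R^n],\|\cdot\|_{SD^2})$ into $\ell^2$, which extends to an isometry of $SD^2[\R^n]$ onto a closed subspace of $\ell^2$. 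I intend to read off each of the three properties in each item by inspecting $\Phi$.

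First I would treat containment, continuity and density. For $f\in{\bf L}^p[\R^n]$ with conjugate exponent $p'$, Hölder's inequality together with $|\mcE_m|<1$ and $\la_n(\text{supp}\,\mcE_m)\le V_0$ gives $|F_m(f)|\le\|\mcE_m\|_{p'}\|f\|_p\le V_0^{1/p'}\|f\|_p\le\|f\|_p$ (reading $V_0^{1/p'}$ as $V_0$ when $p=\iy$), whence $\|f\|_{SD^2}^2\le(\sum_m t_m)\|f\|_p^2=\|f\|_p^2$ and ${\bf L}^p[\R^n]\hookrightarrow SD^2[\R^n]$ is continuous. Density is then automatic: the simple functions supported on bounded Borel sets lie in every ${\bf L}^q[\R^n]$, are $L^1$-dense in ${\bf L}^1[\R^n]$, hence (by the last inequality with $p=1$) dense in $SD^2[\R^n]$, so each ${\bf L}^p[\R^n]$ is dense in $SD^2[\R^n]$.

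The compactness is the substantive point. Let $K\ci{\bf L}^p[\R^n]$ be norm-bounded, say $\|f\|_p\le C$ on $K$. The estimate above yields $|t_m^{1/2}F_m(f)|\le C\,t_m^{1/2}$ for every $f\in K$ and every $m$, so $\Phi(K)$ is contained in the ``box'' $\{\,z\in\ell^2:|z_m|\le C\,t_m^{1/2}\ \text{for all }m\,\}$; since $\sum_m (C\,t_m^{1/2})^2=C^2\sum_m t_m<\iy$, this box is a compact subset of $\ell^2$ (equivalently $\sup_{f\in K}\sum_{m\ge N}t_m|F_m(f)|^2\le C^2\sum_{m\ge N}t_m\to 0$, so $\Phi(K)$ is bounded with uniformly small tails, hence totally bounded). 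Because $\Phi$ identifies $SD^2[\R^n]$ with a closed subspace of $\ell^2$, $K$ is relatively compact in $SD^2[\R^n]$, so the embedding is compact. I would flag exactly this step as the one deserving care: the naive analogue $\ell^1\hookrightarrow\ell^2$ is bounded but not compact, and what rescues the argument is the summability of the weights $t_m$ together with the uniform bound $\|F_m\|\le 1$ coming from $|\mcE_m|<1$ and $\la_n(\text{supp}\,\mcE_m)\le V_0$ --- the same mechanism underlying the compactness clause for the $KS^p$ spaces.

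Finally, part (2) follows by the same template. For $\mu\in\mfM[\R^n]$ the number $F_m(\mu):=\int_{\R^n}\mcE_m\cdot d\mu$ is defined and $|F_m(\mu)|\le\|\mu\|$ (total variation), since $|\mcE_m|<1$; hence $\|\mu\|_{SD^2}^2:=\sum_m t_m|F_m(\mu)|^2\le\|\mu\|^2<\iy$, giving a continuous map $\mfM[\R^n]\to SD^2[\R^n]$ that restricts to the embedding of ${\bf L}^1[\R^n]$. (Alternatively, since $SD^2[\R^n]$ is a Hilbert space it is reflexive, so taking the second adjoint of the dense continuous injection ${\bf L}^1[\R^n]\hookrightarrow SD^2[\R^n]$ produces $\mfM[\R^n]=({\bf L}^1[\R^n])^{**}\hookrightarrow(SD^2[\R^n])^{**}=SD^2[\R^n]$.) Injectivity of $\mu\mapsto(F_m(\mu))_m$ on $\mfM[\R^n]$ is where the fundamentality of the family $\{F_m\}$ recorded in the construction is used; density is inherited from that of ${\bf L}^1[\R^n]$; and compactness is once more the box argument, now applied to $\Phi(\mu)=(t_m^{1/2}F_m(\mu))_m$ via the uniform bound $|F_m(\mu)|\le\|\mu\|$.
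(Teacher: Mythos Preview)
Your argument is correct and, in the compactness step, genuinely different from the paper's route. The paper omits the proof and refers back to the $KS^p$ theorems; there, continuity and density are obtained by essentially the same H\"older-type estimate you use, but compactness is deduced from the implication ``weak convergence in $L^p$ $\Rightarrow$ $F_k(f_m-f)\to 0$ for each $k$ $\Rightarrow$ strong convergence in $KS^p$,'' i.e.\ weakly compact subsets of $L^p$ become norm-compact in the new space. Your approach instead exploits the isometric embedding $\Phi\colon SD^2\to\ell^2$ and the uniform bound $|F_m(f)|\le\|f\|_p$ to place the image of any bounded set inside the Hilbert box $\{z:|z_m|\le C\,t_m^{1/2}\}$, whose compactness follows from $\sum_m t_m<\infty$. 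This is more elementary (no weak topology is invoked), and it delivers the compact-embedding statement uniformly for all $1\le p\le\infty$, including the endpoints $p=1,\infty$ where ``bounded'' and ``weakly relatively compact'' do not coincide; the paper's weak-to-strong mechanism, as written for $KS^p$, only directly yields compactness of weakly compact sets. Your treatment of $\mathfrak{M}[\R^n]$ via the same box estimate (or alternatively via the bidual) is likewise sound; the one point you rightly flag is that injectivity on $\mathfrak{M}$ requires the fundamentality of $\{F_m\}$ beyond $L^p$, which the paper asserts but does not argue separately.
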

\begin{Def}We call ${{SD}}^2 [{\mathbb{R}}^n ] $ the Jones strong distribution Hilbert space on $\R^n$.
\end{Def}
In order to justify our definition, let $\al$ be a multi-index of nonnegative integers, $\al = (\al_1, \ \al_2, \ \cdots \ \al_k)$, with $\left| \alpha  \right| = \sum\nolimits_{j = 1}^k {\alpha _j } $.   If $D$ denotes the standard partial differential operator, let 
$D^{\al}=D^{\al_1}D^{\al_2} \cdots D^{\al_k}$.
\begin{thm} Let $\mcD[\R^n]$  be $\C_c^\iy[\R^n]$ equipped with the standard locally convex topology (test functions).
\begin{enumerate}
\item If $\phi_j \to \phi$ in $\mcD[\R^n]$, then $\phi_j \to \phi$ in the norm topology of  $ SD^2[\R^n]$, so that $\mcD[\R^n] \subset {{SD}}^2[{\mathbb{R}}^n ]$ as a continuous dense embedding.
\item If $T \in \mcD'[\R^n]$, then $T \in {SD^2[\R^n]}'$, so that $\mcD'[\R^n] \subset {{SD}^2[{\mathbb{R}}^n ]}'$ as a continuous dense embedding.
\item For any $f, \ g   \in SD^2[\R^n]$ and any multi-index $\al, \ {\left( {{D^\alpha }f,g} \right)_{SD}}={(-i)^\alpha }{\left( {f,g}\right)_{SD}}$.
\end{enumerate}
\end{thm}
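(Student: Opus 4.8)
The plan is to dispatch (1) and (2) by reproducing the arguments already used for the $KS^p$ spaces, and to concentrate on (3), which carries the new content. For (1) I would work directly from the definition $\|f\|_{SD}^2 = \sum_{m=1}^\iy t_m\bigl|\int_{\R^n}\mcE_m(\mathbf{x})\cdot f(\mathbf{x})\,d\lambda_n(\mathbf{x})\bigr|^2$, where $\sum_m t_m = 1$. If $\phi_j\to\phi$ in $\mcD[\R^n]$, the differences $\phi_j-\phi$ lie in one fixed compact set and $\|\phi_j-\phi\|_\iy\to 0$ (the $\al=0$ instance of $\mcD$-convergence); since $|\mcE_m|<1$ and each $\mcE_m$ is supported in a cube of Lebesgue measure at most $2^{-n}$, every inner integral is at most $2^{-n}\|\phi_j-\phi\|_\iy$, uniformly in $m$, so $\|\phi_j-\phi\|_{SD}\le 2^{-n}\|\phi_j-\phi\|_\iy\to 0$. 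The same estimate applied to a single function gives $\|\phi\|_{SD}\le 2^{-n}\|\phi\|_\iy$, so the inclusion $\mcD[\R^n]\hookrightarrow SD^2[\R^n]$ is continuous, and it is dense because $\mathbf{L}^1[\R^n]$ is dense in $SD^2[\R^n]$ by construction while $\C_c^\iy$ is $L^1$-dense in $\mathbf{L}^1[\R^n]$; this is exactly the proof given for $\mcD\subset KS^p$, with the Jones functions $\mcE_m$ in place of the cubes' characteristic functions. Part (2) then follows from (1) by transposition together with the Hahn--Banach theorem, precisely as for the $KS^p$ spaces, and I would not repeat that step.

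The core is (3). I would first establish, for $f\in\C_c^\iy$ (which is dense in $SD^2[\R^n]$ by (1)) and every multi-index $\al$, the pointwise-in-$m$ identity $\int_{\R^n}\mcE_m(\mathbf{x})\cdot D^\al f(\mathbf{x})\,d\lambda_n(\mathbf{x}) = (-i)^{|\al|}\int_{\R^n}\mcE_m(\mathbf{x})\cdot f(\mathbf{x})\,d\lambda_n(\mathbf{x})$. Substituting this term by term into the bilinear expression for $(\cdot,\cdot)_{SD}$ gives $(D^\al f,g)_{SD} = (-i)^{|\al|}(f,g)_{SD}$ for $f,g\in\C_c^\iy$; in particular $\|D^\al f\|_{SD}=\|f\|_{SD}$ there, so $D^\al$ extends to a surjective isometry of $SD^2[\R^n]$ (note this cannot be deduced from Corollary~\rf{be}, since $D^\al$ is unbounded on $\mathbf{L}^1$), and the identity then propagates to all $f,g\in SD^2[\R^n]$ by the (anti)linearity and continuity of the inner product --- this extension being what gives meaning to ``$D^\al f$'' for a general $f\in SD^2[\R^n]$. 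To prove the pointwise identity I would integrate by parts $|\al|$ times, moving $D^\al$ from $f$ onto $\mcE_m=(\xi_k^i(x_1),\dots,\xi_k^i(x_n))$: each integration by parts contributes a factor $-1$, and the first-order relation carried by the Jones functions --- $h_k'=-ih_k$ on $(-2^{-k},2^{-k})$, whence $(\chi_k^i)'=-i\chi_k^i$ and $(\xi_k^i)'=i\xi_k^i$, itself inherited from $iy\,\partial_y g=\partial_x g$ --- contributes a factor $i$ per derivative, for the net factor $(-i)^{|\al|}$ claimed.

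The step I expect to be the genuine obstacle is that integration by parts. Each $\xi_k^i$ coincides with the pure exponential $\tf{1}{n} e^{i(x-x^i)}$ on $I_k^i$ and is cut off there, so one must handle the boundary (or jump) contributions carefully and verify that the smoothing convolution of Section~4.1 --- $\chi_k^i=f_k^i*h_k$, $\xi_k^i(x)=\al_{k,i}^{-1}\chi_k^i(-x)$ --- is arranged exactly so that those contributions vanish; and when $\al$ mixes several coordinates one must also check that the iterated partials interact correctly with the product structure of $\mcE_m$ and with the compact support of $f$ in the remaining variables, so that the surviving terms reassemble $(-i)^{|\al|}\int_{\R^n}\mcE_m\cdot f\,d\lambda_n$. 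With those verifications in hand, the rest of the proof is routine bookkeeping.
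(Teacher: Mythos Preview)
Your approach coincides with the paper's: for (1) and (2) the paper simply says they are easy, and for (3) it does exactly what you propose --- it asserts $\mcE_m \in \C_c^\iy[\R^n]$, integrates $D^\al$ by parts onto $\mcE_m$, and then invokes the Jones-function derivative relation to extract the factor $(-i)^{|\al|}$. Your extra step of first proving (3) on $\C_c^\iy$ and extending by density, together with your attention to the boundary/cutoff verification, are careful additions the paper passes over, but the underlying strategy is identical.
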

\begin{proof} The proofs of (1) and (2) are easy.   To prove (3), we use the fact that each ${\mathcal{E}}_m \in \C_c^{\iy}[{\mathbb{R}^n }]$.  Thus, for any $f \in SD^2[\R^n]$ we have: 
\[ 
{\int_{\mathbb{R}^n } {{\mathcal{E}}_m ({\mathbf{x}}) \cdot D^{\al}{{f}}({\mathbf{x}})d\la_n({\mathbf{x}})} }= (-1)^{\lt|\al \rt|}{\int_{\mathbb{R}^n } D^{\al} {{\mathcal{E}}_m ({\mathbf{x}}) \cdot{{f}{}}({\mathbf{x}})d\la_n({\mathbf{x}})} }.
\]
An easy calculation shows that: 
\[ 
(-1)^{\lt|\al \rt|}{\int_{\mathbb{R}^n } {D^{\al}{\mathcal{E}}_m ({\mathbf{x}}) \cdot {{f}}({\mathbf{x}})d\la_n({\mathbf{x}})} }= (-i)^{\lt|\al \rt|}{\int_{\mathbb{R}^n } {{\mathcal{E}}_m ({\mathbf{x}}) \cdot{{f}{}}({\mathbf{x}})d\la_n({\mathbf{x}})} }.
\]
It now follows that, for any ${\bf g}  \in {{SD}}^2[{\mathbb{R}}^n ], \; (D^{\al}{{f}}, {\bf g})_{{{SD}}^2}= (-i)^{\lt|\al \rt|}({{f}{}},{\bf g})_{{{SD}}^2}$.
\end{proof} 
\subsection{Functions of Bounded Variation}
The objective of this section is to show that every  HK-integrable function is in  ${{SD}}^2 [{\mathbb{R}}^n ] $.  To do this, we need to discuss a certain class of  functions of bounded variation.  For functions defined on $\R$, the definition of bounded variation is unique.  However, for functions on $\R^n, \; n \ge 2$, there are a number of distinct definitions.

The functions of  bounded variation in the sense of Cesari are well known to analysts working in partial differential equations  and geometric measure theory (see Leoni \cite{GL}).  
\begin{Def}A function $f \in L^1[\R^n]$ is said to be of bounded variation in the sense of Cesari or $f \in BV_c[\R^n]$, if $f \in L^1[\R^n]$ and each $i, \; 1 \le i \le n$, there exists a signed Radon measure $\mu_i$, such that  
\[
\int_{\mathbb{R}^n } {f({\mathbf{x}})\frac{{\partial \phi ({\mathbf{x}})}}
{{\partial x_i }}d\lambda _n ({\mathbf{x}})}  =  - \int_{\mathbb{R}^n } {\phi ({\mathbf{x}})d\mu _i ({\mathbf{x}})} ,
\]
for all $\phi \in \C_0^\iy[\R^n]$.
\end{Def} 

The functions of  bounded variation in the sense of Vitali \cite{TY1}, are well known to applied mathematicians and engineers with interest in error estimates associated with research in control theory, financial derivatives, high speed networks, robotics and in the calculation of certain integrals.   (See, for example \cite{KAA}, \cite{{NI}}, \cite{PT} or \cite{PTR} and references therein.)
For the general definition, see Yeong (\cite{TY1}, p. 175).  We present a definition that is sufficient for continuously  differentiable functions.
\begin{Def} A function $f$ with continuous partials is said to be of bounded variation in the sense of Vitali or $f \in BV_v[\R^n]$ if for all intervals $(a_i,b_i), \, 1 \le i \le n$,
\[
V(f)=\int_{a_1 }^{b_1 } { \cdots \int_{a_n }^{b_n } {\left| {\frac{{\partial ^n f({\mathbf{x}})}}
{{\partial x_1 \partial x_2  \cdots \partial x_n }}} \right|d\lambda _n ({\mathbf{x}})} }  < \infty. 
\]
\end{Def}
\begin{Def}We define $BV_{v,0}[\R^n]$ by: 
\[
BV_{v,0}[\R^n]= \{f({\bf x}) \in BV_v[\R^n]: f({\bf x}) \to 0, \; {\rm as} \; x_i \to -\iy \},
\]
where $x_i$ is any component of ${\bf x}$. 
\end{Def}
The following two theorems may be found in \cite{TY1}. (See p. 184 and 187, where the first is used to prove the second.)  Recall that, if $[a_i, b_i] \subset {\bar \R}=[-\iy, \iy]$, we define $[{\bf a}, {\bf b}] \in {\bar \R}^n$ by   $[{\bf a}, {\bf b}]=\prod_{k=1}^n{[a_i,b_i]}$.  (The notation $(RS)$ means Riemann-Stieltjes.)
\begin{thm}Let $f$ be HK-integrable on $\left[ {{\mathbf{a}},{\mathbf{b}}} \right]$ and let $g \in BV_{v,0}[\R^n]$, then $fg$ is 
HK-integrable and 
\[
(HK)\int_{[{\bf a}, {\bf b}] }{f({\bf x})g({\bf x})d\la_n({\bf x})}=(RS)\int_{[{\bf a}, {\bf b}] }{\lt\{(HK)\int_{[{\bf a}, {\bf x}] }f({\bf y})d\la_n({\bf y})\rt\}dg({\bf x})}
\] 
\end{thm}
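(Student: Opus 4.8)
The plan is to establish the integration-by-parts / Fubini-type identity that exchanges the HK-integral of the product $fg$ with a Riemann--Stieltjes integral in the variable of $g$, working inductively on the dimension $n$. First I would dispose of the one-dimensional case $n=1$: here $g \in BV_{v,0}[\R]$ means $g$ is of bounded variation with $g(a)=0$ (taking the limit at $-\iy$ if $a=-\iy$), and the claim is the classical result that the HK-integral of $fg$ exists whenever $f$ is HK-integrable and $g$ is of bounded variation, together with the integration-by-parts formula
\[
(HK)\int_a^b f(x)g(x)\,d\la(x) = (RS)\int_a^b \lt\{(HK)\int_a^x f(y)\,d\la(y)\rt\}dg(x).
\]
This is a standard theorem for the HK-integral (it is essentially Theorem 12.?? in any of Gordon \cite{GO} or Pfeffer \cite{PF}); the key inputs are that $F(x)=(HK)\int_a^x f$ is continuous, that a continuous function times a function of bounded variation is Riemann--Stieltjes integrable, and the HK multiplier theorem. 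I would cite this rather than reprove it.

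Next I would set up the induction on $n$. Write $\R^n = \R^{n-1}\times\R$ with $\mathbf{x}=(\mathbf{x}', x_n)$, and fix $\mathbf{a},\mathbf{b}$. The essential structural fact about $BV_v$ (bounded variation in the sense of Vitali) is that the mixed partial $\partial^n g / \partial x_1 \cdots \partial x_n$ is absolutely integrable; equivalently, for $g \in BV_{v,0}$ the Vitali variation controls all lower-order ``sections'' and the function $x_n \mapsto g(\mathbf{x}', x_n)$ lies in $BV_{v,0}[\R]$ for each fixed $\mathbf{x}'$, with variation uniformly bounded by $V(g)$. The plan is to apply the $n=1$ result in the last variable $x_n$ (with $f(\mathbf{x}',\cdot)$ as the HK-integrable factor and $g(\mathbf{x}',\cdot)$ as the $BV_{v,0}$ factor), obtaining for a.e. $\mathbf{x}'$ the one-dimensional identity; then integrate over $\mathbf{x}'\in[\mathbf{a}',\mathbf{b}']$ and invoke the Fubini theorem for the HK-integral (valid here because of the bounded-variation structure — this is exactly the role played by the companion theorem referenced in the paper as being on p.~184 of \cite{TY1}) to reassemble the left side, and apply the inductive hypothesis in the remaining $n-1$ variables to the right side. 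Iterating the Riemann--Stieltjes integral variable by variable and collapsing the nested integrals gives the stated formula with $dg(\mathbf{x})$ interpreted as the Stieltjes measure $\partial^n g/\partial x_1\cdots\partial x_n \, d\la_n$ on $[\mathbf{a},\mathbf{b}]$.

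The main obstacle I anticipate is the careful handling of the iterated Riemann--Stieltjes integral and the justification of the Fubini exchange at each stage: one must verify that after integrating out $x_n$, the function
\[
\mathbf{x}' \longmapsto (RS)\int_{a_n}^{x_n^{(0)}} F(\mathbf{x}', x_n)\,d_{x_n} g(\mathbf{x}', x_n)
\]
is itself HK-integrable in $\mathbf{x}'$ and of the right regularity to feed into the inductive step, and that the boundary terms all vanish because of the $BV_{v,0}$ normalization $g \to 0$ as any coordinate $\to -\iy$. The bookkeeping of which partial boundary terms survive in higher dimensions (the inclusion--exclusion over the $2^n$ corners of $[\mathbf{a},\mathbf{b}]$) is the delicate point; the $BV_{v,0}$ hypothesis is precisely what kills all but the ``top'' term. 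Everything else — continuity of the HK primitive, existence of the RS integrals, the multiplier property — is routine once the one-dimensional case and the HK-Fubini theorem are granted.
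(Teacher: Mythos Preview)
The paper does not give its own proof of this theorem: it is stated with the remark ``The following two theorems may be found in \cite{TY1}. (See p.~184 and 187, where the first is used to prove the second.)'' and no argument is supplied. So there is nothing in the paper to compare your proposal against line by line.

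That said, your outline is the natural route and is broadly in the spirit of how such results are established in \cite{TY1}: reduce to the one-dimensional HK integration-by-parts/multiplier theorem, then induct on $n$ using an HK--Fubini theorem, with the $BV_{v,0}$ normalization killing the lower-corner boundary terms. Two cautions. First, the HK--Fubini step is genuinely delicate: Fubini for the HK-integral is not automatic, and you are right to flag it, but in your sketch you lean on it without specifying which version you invoke or why the hypotheses are met after one variable has been integrated out; in \cite{TY1} this is handled carefully and is the real content of the proof. Second, your identification of $dg(\mathbf{x})$ with $\partial^n g/\partial x_1\cdots\partial x_n\,d\la_n$ is only literally correct under the smoothness built into the paper's Definition~4.5; for the general Vitali-$BV$ class the Stieltjes measure is defined via the interval function of $g$, and the inclusion--exclusion bookkeeping you mention is exactly what encodes that. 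If you tighten those two points your plan is sound.
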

\begin{thm}Let $f$ be HK-integrable on $\left[ {{\mathbf{a}},{\mathbf{b}}} \right]$ and let $g \in BV_{v,0}[\R^n]$, then $fg$ is 
HK-integrable and 
\[
\lt|(HK)\int_{[{\bf a}, {\bf b}] }{f({\bf x})g({\bf x})d\la_n({\bf x})}\rt|
\le \lt\|f\rt\|_{A_n}V_{[{\bf a}, {\bf b}] }(g)
\]
\end{thm}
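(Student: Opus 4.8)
The plan is to reduce the whole statement to the Riemann--Stieltjes representation supplied by the preceding theorem and then to estimate that integral in the classical way. First I would introduce the indefinite HK-integral
\[
F({\bf x})=(HK)\int_{[{\bf a},{\bf x}]}f({\bf y})\,d\la_n({\bf y}),\qquad {\bf x}\in[{\bf a},{\bf b}].
\]
Extending $f$ by zero outside $[{\bf a},{\bf b}]$, one checks that $F({\bf x})=\int_{-\iy}^{x_1}\cdots\int_{-\iy}^{x_n}f({\bf y})\,d\la_n({\bf y})$, so $F({\bf x})$ is one of the expressions occurring inside the supremum that defines $\|f\|_{A_n}$; hence $\sup_{{\bf x}\in[{\bf a},{\bf b}]}|F({\bf x})|\le\|f\|_{A_n}$. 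Moreover $F$ is continuous on $[{\bf a},{\bf b}]$ because the indefinite HK-integral is continuous in each variable.

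Next I would invoke the preceding theorem, which gives simultaneously that $fg$ is HK-integrable on $[{\bf a},{\bf b}]$ and that
\[
(HK)\int_{[{\bf a},{\bf b}]}f({\bf x})g({\bf x})\,d\la_n({\bf x})=(RS)\int_{[{\bf a},{\bf b}]}F({\bf x})\,dg({\bf x}).
\]
I would then bound the right-hand side directly from the definition of the $n$-dimensional Riemann--Stieltjes integral. A Riemann--Stieltjes sum over a partition of $[{\bf a},{\bf b}]$ into subintervals $\{J_\ell\}$ has the form $\sum_\ell F(\bs\xi_\ell)\,\De_\ell g$, where $\bs\xi_\ell\in J_\ell$ and $\De_\ell g$ is the $n$-th order mixed increment of $g$ over $J_\ell$; by definition of the Vitali variation (or, for $g$ with continuous partials, since $\De_\ell g=\int_{J_\ell}\f{\partial^n g}{\partial x_1\cdots\partial x_n}\,d\la_n$) one has $\sum_\ell|\De_\ell g|\le V_{[{\bf a},{\bf b}]}(g)$. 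Therefore
\[
\lt|\sum_\ell F(\bs\xi_\ell)\,\De_\ell g\rt|\le\lt(\sup_{[{\bf a},{\bf b}]}|F|\rt)\sum_\ell|\De_\ell g|\le\|f\|_{A_n}\,V_{[{\bf a},{\bf b}]}(g),
\]
and letting the mesh of the partition tend to zero (the limit exists since $F$ is continuous and $g$ is of bounded Vitali variation) yields $\lt|(RS)\int_{[{\bf a},{\bf b}]}F\,dg\rt|\le\|f\|_{A_n}V_{[{\bf a},{\bf b}]}(g)$, which is the claimed bound.

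The step I expect to require the most care is the Riemann--Stieltjes estimate itself: organizing the partition sums so that they are controlled by the \emph{Vitali} variation rather than by a larger Hardy--Krause-type variation, and verifying that no lower-dimensional boundary contributions intrude. This is precisely the role of the hypothesis $g\in BV_{v,0}[\R^n]$ --- the vanishing of $g$ as any coordinate tends to $-\iy$ is what allows the preceding theorem to collapse the multidimensional integration by parts into a single clean Riemann--Stieltjes integral against $dg$, after which only the top-order variation $V_{[{\bf a},{\bf b}]}(g)$ enters. The remaining points --- the zero-extension identity for $F$, the continuity of the indefinite HK-integral, and the convergence of the Riemann--Stieltjes sums --- are routine.
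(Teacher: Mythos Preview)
Your proposal is correct and follows exactly the route the paper indicates: the paper does not give an independent proof but states that these two theorems are taken from \cite{TY1} (pp.~184 and 187), with the explicit remark that ``the first is used to prove the second.'' Your argument does precisely this --- apply the preceding Riemann--Stieltjes representation theorem and then bound $\left|(RS)\int F\,dg\right|$ by $\sup|F|\cdot V_{[{\bf a},{\bf b}]}(g)\le\|f\|_{A_n}V_{[{\bf a},{\bf b}]}(g)$ --- so there is nothing to add.
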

\begin{lem}\lb{8}The space $HK[{\mathbb{R}}^n ]$, of all HK-integrable functions is contained in ${{SD}}^2 [{\mathbb{R}}^n ] $.
\end{lem}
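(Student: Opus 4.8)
The plan is to show that on HK-integrable functions the $SD^2$-norm is dominated by the Alexiewicz norm, and then conclude by density. First I would recall that for $f \in HK[\R^n]$ the Alexiewicz norm $\|f\|_{A_n}$ is finite by the very definition of that space, that $\C_c^\iy[\R^n]$ (and more generally $\mathbf{L}^1[\R^n]$ together with the other ``nice'' functions) is dense in $HK[\R^n]$ in the Alexiewicz norm, and that $SD^2[\R^n]$ is a complete space containing $\mathbf{L}^1[\R^n]$. Hence it suffices to produce an inequality of the form $\|f\|_{SD^2} \le C_n\|f\|_{A_n}$ valid on this dense subspace: the inclusion map then extends to a continuous map $HK[\R^n]\to SD^2[\R^n]$, and injectivity (so that it is genuinely a containment) follows from the fundamental property of the family $\{F_m\}$. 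Equivalently, one may argue directly: for an actual HK-integrable $f$ the numbers $F_m(f)$ are well defined HK-integrals, $\|f\|_{SD^2}$ computed by the defining series is finite, and approximating $f$ in the Alexiewicz norm by $\C_c^\iy$ functions produces an $SD^2$-Cauchy sequence converging to $f$.

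The core of the argument is a termwise bound on $F_m(f)=\int_{\R^n}\mcE_m(\mathbf{x})\cdot f(\mathbf{x})\,d\la_n(\mathbf{x})$. Each coordinate of $\mcE_m$ is built from the Jones functions $\xi_k^i$ and so lies in $\C_c^\iy[\R^n]$; in particular $\mcE_m$ vanishes outside a cube, hence $\mcE_m\in BV_{v,0}[\R^n]$ (it tends to $0$ as any coordinate tends to $-\iy$). Applying the Riemann--Stieltjes representation theorem and the subsequent bound stated just above the lemma, with $[\mathbf{a},\mathbf{b}]=\bar{\R}^n$, the product $f\mcE_m$ is HK-integrable and
\[
|F_m(f)| \le \|f\|_{A_n}\, V_{\bar{\R}^n}(\mcE_m).
\]
Consequently $\|f\|_{SD^2}^2 = \sum_m t_m|F_m(f)|^2 \le \|f\|_{A_n}^2\sum_m t_m V(\mcE_m)^2$, and the proof reduces to showing that $\sum_m t_m V(\mcE_m)^2$ converges.

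The main obstacle, and the only genuinely technical point, is to control the Vitali variations $V(\mcE_m)$. Writing $m=(k,i)$, the function $\mcE_m$ is a product over the $n$ coordinates of copies of $\xi_k^i$, which has amplitude $<1/n$ and is supported on an interval of length $\sim 2^{-k}$; I expect that differentiating the product once in each variable and integrating over the supporting cube yields a bound $V(\mcE_m)\le C^{\,k}$, with the $2^{k}$ factors produced by the derivatives cancelling against the $2^{-nk}$ volume of the cube, so that only the oscillation and the smooth transition of the Jones profile contribute. Since the pairing $\N\times\N\to\N$ forces $k=O(\sqrt m)$, this gives $V(\mcE_m)^2\le C^{O(\sqrt m)}$, which is crushed by the geometric weight $t_m=2^{-m}$, so $\sum_m t_m V(\mcE_m)^2$ converges with room to spare; indeed the $2^{k}$-versus-$2^{-nk}$ cancellation suggests the cleaner statement $\sup_m V(\mcE_m)<\infty$, which would make the convergence immediate. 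Either way one obtains $\|f\|_{SD^2}\le C_n\|f\|_{A_n}$, and therefore $HK[\R^n]\subset SD^2[\R^n]$ as a continuous embedding.
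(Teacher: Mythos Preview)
Your approach is essentially the same as the paper's: both invoke $\mcE_m\in BV_{v,0}[\R^n]$ and the HK--BV inequality from \cite{TY1} to bound $|F_m(f)|\le \|f\|_{A_n}V(\mcE_m)$, then control the variations to conclude $\|f\|_{SD^2}\le C_n\|f\|_{A_n}$. The paper is terser---it passes to $\sup_m$ immediately and simply asserts $\sup_m V(\mcE_m)<\infty$ without the growth analysis you supply, and it omits the density/completion discussion you include---but the substance is identical.
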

\begin{proof}
Since each $\mcE_m({\bf x})$ is  continuous and differentiable, $\mcE_m({\bf x}) \in BV_{v,0}[\R^n]$, so that for  $f \in HK[{\mathbb{R}}^n ] $,
\[
\begin{gathered}
\left\| f \right\|_{{\bf{SD}}^2}^2  = \sum\nolimits_{m = 1}^\infty  {t_m } \left| {\int_{\mathbb{R}^n } {{\mathcal{E}}_m ({\mathbf{x}})\cdot f({\mathbf{x}})d{\mathbf{x}}} } \right|^2  \leqslant \sup _m \left| {\int_{\mathbb{R}^n } {{\mathcal{E}}_m ({\mathbf{x}})\cdot f({\mathbf{x}})d{\mathbf{x}}} } \right|^2  \hfill \\
\leqslant \left\| f \right\|_{A_n}^2 [\sup _m V(\mcE_m)]^2 <\iy. \hfill \\
\end{gathered} 
\]
It follows that $f \in {{SD}}^2 [{\mathbb{R}}^n ]$. 
\end{proof}  
\subsection{{{The General Case,} ${{SD}}^p,\; 1 \le p \le \iy$ }} 
To construct ${SD}^p [{\mathbb{R}}^n ]$ for all $p$ and  for ${\bf f} \in {\bf{L}}^p$, define:
\[
\left\| {\bf f} \right\|_{{{SD}}^p }  = \left\{ {\begin{array}{*{20}c}
   {\left\{ {\sum\limits_{m = 1}^\infty  {t_m \left| {\int_{\mathbb{R}^n } { {\mathcal{E}}_m ({\mathbf{x}}) \cdot {\bf f}({\mathbf{x}})d\la_n({\mathbf{x}})} } \right|} ^p } \right\}^{1/p} ,1 \leqslant p < \infty},  \\
 \mathop {\sup }\limits_{m \geqslant 1} \left| {\int_{\mathbb{R}^n } { {\mathcal{E}}_m ({\mathbf{x}}) \cdot {\bf f}({\mathbf{x}})d\la_n({\mathbf{x}})} } \right|, \;\; \quad \quad \quad \quad \quad p = \infty .  \\

 \end{array} } \right.
\] 
It is easy to see that $\left\| \cdot \right\|_{{{SD}}^p }$ defines a norm on  ${\bf{L}}^p$.  If ${{{SD}}^p }$ is the completion of ${\bf{L}}^p$ with respect to this norm, we have:
\begin{thm} For each $q,\;1 \leqslant q \leqslant \infty,$
 ${SD}^p [{\mathbb{R}}^n ] \supset {\bf{L}}^q [{\mathbb{R}}^n ]$ as a continuous dense  embedding.
\end{thm}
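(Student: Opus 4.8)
The plan is to follow verbatim the pattern already used for the ${KS}^p[\R^n]$ spaces: the only structural novelty is that $\mcE_m$ and the elements of $\mathbf{L}^p$ are now $\R^n$-valued and the functionals $F_m$ involve a dot product, but every analytic step transcribes directly. First, by construction ${SD}^p[\R^n]$ is the completion of $\mathbf{L}^p[\R^n]$ in the ${SD}^p$-norm, so it already contains $\mathbf{L}^p$ as a dense subspace; hence it remains only to treat $q\neq p$ and to show that the inclusion $\mathbf{L}^q\hookrightarrow {SD}^p$ is well defined and continuous. The two facts I would extract from the construction in Section 4.1 are the pointwise bound $\left|\mcE_m(\mathbf{x})\right|<1$ and the fact that $\operatorname{spt}(\mcE_m)=\prod_{j=1}^n I_k^i$ (with $m\leftrightarrow(k,i)$) is a cube of Lebesgue measure $\le 1$.

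Assume $1\le p<\infty$. If $q<\infty$, then applying Cauchy--Schwarz to the dot product, using $\left|\mcE_m\right|<1$, and then Hölder's (equivalently Jensen's) inequality on the cube $\operatorname{spt}(\mcE_m)$ of measure $\le1$, one obtains, exactly as in the ${KS}^p$ computation,
\[
\left|F_m(\mathbf{f})\right|^{q}=\left|\int_{\R^n}\mcE_m(\mathbf{x})\cdot \mathbf{f}(\mathbf{x})\,d\la_n(\mathbf{x})\right|^{q}\le \int_{\operatorname{spt}(\mcE_m)}\left|\mathbf{f}(\mathbf{x})\right|^{q}\,d\la_n(\mathbf{x}).
\]
Raising this to the power $p/q$, multiplying by $t_m$, summing over $m$, and using $\sum_{m\ge1}t_m=1$ to dominate the sum by the supremum, gives
\[
\left\|\mathbf{f}\right\|_{{SD}^p}\le \sup_m\left(\int_{\operatorname{spt}(\mcE_m)}\left|\mathbf{f}\right|^{q}d\la_n\right)^{1/q}\le \left\|\mathbf{f}\right\|_q,
\]
so $\mathbf{f}\in {SD}^p[\R^n]$. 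If $q=\infty$, I would instead use $\left|F_m(\mathbf{f})\right|\le\la_n(\operatorname{spt}(\mcE_m))\left\|\mathbf{f}\right\|_\infty$ together with $\sum_m t_m[\la_n(\operatorname{spt}(\mcE_m))]^p<\infty$ to conclude $\left\|\mathbf{f}\right\|_{{SD}^p}\le M\left\|\mathbf{f}\right\|_\infty$. The case $p=\infty$ is the same estimate with the summation replaced by a supremum and is immediate.

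For density, I would exhibit one subclass that works uniformly in $q$: the compactly supported vector-valued test functions (or compactly supported simple functions in $\mathbf{L}^\infty$) lie in $\mathbf{L}^q$ for every $q$, are dense in $\mathbf{L}^p$ in the $\mathbf{L}^p$-norm, and therefore — by the $q=p$ instance of the inequality above, which yields $\left\|\cdot\right\|_{{SD}^p}\le\left\|\cdot\right\|_p$ — are dense in ${SD}^p$; since this class is contained in $\mathbf{L}^q$, so is $\mathbf{L}^q$ dense in ${SD}^p$. The only point requiring genuine care is the vector-valued bookkeeping in the first displayed inequality, i.e.\ checking that Cauchy--Schwarz on $\mcE_m\cdot\mathbf{f}$ together with $\left|\mcE_m\right|<1$ really does reduce matters to the scalar Jensen/Hölder estimate already carried out for ${KS}^p$; aside from the customary borderline attention at $p=\infty$, this is not a serious obstacle, and the substance of the theorem was already built into the construction of the functions $\mcE_m$.
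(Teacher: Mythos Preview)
Your proposal is correct and follows essentially the same approach as the paper. The paper does not give a separate proof for this theorem; it simply presents the statement in Section~4.4, relying on the earlier remark that ``the proofs \ldots\ are the same as for the ${KS}^p[\mathbb{R}^n]$ spaces,'' and your argument is precisely the ${KS}^p$ computation transcribed to the vector-valued setting via Cauchy--Schwarz and $|\mcE_m|<1$. Your treatment of density (exhibiting a common class sitting in every $\mathbf{L}^q$ and dense in $\mathbf{L}^p$) is slightly more explicit than anything the paper spells out, but this is a cosmetic addition rather than a different route.
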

\begin{thm} For ${SD}^p$, $1\leq p \leq \infty$, we have: 
\begin{enumerate}
\item If $p^{ - 1}  + q^{ - 1}  = 1$, then the dual space of ${SD}^p[{\mathbb{R}}^n ]$ is ${SD}^q[{\mathbb{R}}^n ]$. \item  For all $f \in {SD}^p[{\mathbb{R}}^n ], g \in {SD}^q[{\mathbb{R}}^n ]$ and all multi-index $\al$, $\left\langle {{D^\alpha }f,g} \right\rangle  = {\left( { - i} \right)^{\left| \alpha  \right|}}\left\langle {f,g} \right\rangle $.
\item The test function space ${\mcD}[{\mathbb{R}}^n ]$ is contain in ${SD}^{p}[{\mathbb{R}}^n ]$ as a continuous dense embedding.
\item If $K$ is a weakly compact subset of ${\bf{L}}^p[{\mathbb{R}}^n ]$, it is a strongly compact subset of ${SD}^{p}[{\mathbb{R}}^n ]$.
\item The space $ {SD}^{\infty}[{\mathbb{R}}^n ] \subset {SD}^p[{\mathbb{R}}^n ]$.
\end{enumerate}
\end{thm}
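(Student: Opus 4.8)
The five assertions parallel the corresponding facts for $KS^p$ (the theorem immediately preceding Section 4), so my plan is to run the same arguments, adapting each to the vector-valued setting with the functionals $F_m(f)=\int_{\R^n}\mcE_m(\bf x)\cdot f(\bf x)\,d\la_n(\bf x)$ in place of the scalar functionals built from characteristic functions. The key structural inputs are that $\{F_m\}$ is a fundamental sequence of bounded linear functionals on $\bf{L}^p[\R^n]$ with $\|F_m\|\le 1$, that $t_m=2^{-m}$ sums to $1$, and that each $\mcE_m\in\C_c^\iy[\R^n]$. First I would dispose of (1): the map $f\mapsto (t_m^{1/p}F_m(f))_m$ identifies $SD^p[\R^n]$ isometrically with the closure of the image of $\bf{L}^p$ in a weighted $\ell^p$ space, and under this identification $SD^q$ is realized inside the weighted $\ell^q$ space; then duality of $SD^p$ and $SD^q$ follows from the classical $\ell^p$--$\ell^q$ duality together with the fact that the functionals $F_m$ separate points of $\bf{L}^p$ (fundamentality), exactly as in the $KS^p$ case where this was deduced from the duality map $L_g$. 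For $p=1,\infty$ one checks the endpoint cases by hand as usual.

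For (2), I would differentiate under the integral sign: since each component of $\mcE_m$ lies in $\C_c^\iy$, integration by parts $|\al|$ times gives $F_m(D^\al f)=(-1)^{|\al|}\int_{\R^n} D^\al\mcE_m(\bf x)\cdot f(\bf x)\,d\la_n$, and then the explicit form of the Jones building blocks $\xi_k^i(x)=\tfrac1n e^{i(x-x^i)}\mathbf 1_{I_k^i}(x)$ shows that on the support of $\mcE_m$ one has $D^\al\mcE_m = (-i)^{|\al|}\mcE_m$ (this is precisely the computation already carried out in the proof that $(D^\al f,g)_{SD}=(-i)^{|\al|}(f,g)_{SD}$ for $SD^2$ — the boundary terms vanish because $\xi_k^i$ is smooth with compact support inside $I_k^i$). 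Hence $F_m(D^\al f)=(-i)^{|\al|}F_m(f)$ for every $m$, and pairing against $g\in SD^q$ through the $\ell^p$--$\ell^q$ duality of (1) yields $\langle D^\al f,g\rangle = (-i)^{|\al|}\langle f,g\rangle$.

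Assertion (4) reduces, via $SD^\iy\subset SD^p$ (which is (5), and is immediate since $|F_m(f)|$ uniformly bounded forces $\sum_m t_m|F_m(f)|^p<\iy$, copying the $KS$ argument), to the case $p=\infty$: if $\{f_j\}$ is weakly convergent in $\bf{L}^p$ with weak limit $f$, then $F_m(f_j)\to F_m(f)$ for every fixed $m$ because $\mcE_m$ represents a bounded functional, and a standard $\e/3$ split using $\sum_m t_m<\iy$ (plus the uniform bound $\|F_m\|\le1$ and weak-compactness-implies-boundedness) upgrades this to $\|f_j-f\|_{SD^p}\to 0$; so a weakly compact $K$ is norm-compact in $SD^p$. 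Assertion (3) — the continuous dense embedding $\mcD[\R^n]\subset SD^p[\R^n]$ — I would prove just as in the $KS^p$ theorem and as stated for $SD^2$ earlier: reduce to $p=\iy$ using the continuous embedding $SD^\iy\hookrightarrow SD^p$, and for $\phi_j\to\phi$ in $\mcD$ with supports in a fixed compact $K$, bound $\|\phi_j-\phi\|_{SD^\iy}=\sup_m|F_m(\phi_j-\phi)|\le \sup_m \lambda_n(\mathrm{spt}\,\mcE_m)\cdot\|\phi_j-\phi\|_\infty$, which $\to 0$ since the cubes have bounded volume.

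The main obstacle is the self-duality in (1): one must verify that the natural inclusion $SD^q\hookrightarrow (SD^p)^*$ is \emph{onto}, i.e.\ that every bounded functional on $SD^p$ comes from the weighted-$\ell^q$ side. This requires knowing that the closed subspace of weighted $\ell^p$ cut out as the closure of $\{(F_m(f))_m : f\in\bf{L}^p\}$ has the property that its annihilator behaves correctly — equivalently, that the $F_m$ are not merely fundamental but that the coordinate functionals restricted to this subspace already exhaust the dual. For $KS^p$ this was handled through reflexivity (via uniform convexity / Clarkson) together with the explicit duality map; I would import exactly that route here, establishing uniform convexity of $SD^p$ for $1<p<\iy$ by the Clarkson-inequality argument on the $\ell^p$ model, then pinning down the dual via the duality map $L_g$ analogous to the one displayed for $KS^q$, and finally treating $p=1,\infty$ separately as the non-reflexive endpoints. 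Everything else is routine transcription of the $KS^p$ proofs with $\mcE_m\cdot(\,\cdot\,)$ replacing scalar multiplication.
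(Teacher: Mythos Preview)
The paper gives no explicit proof of this theorem; it is stated immediately after the analogous $KS^p$ result and the $SD^2$ theorem, with the evident intent that the reader transplant those arguments, and that is exactly what your proposal does --- items (3), (4), (5) are direct transcriptions of the $KS^p$ proofs, item (2) reuses the integration-by-parts identity $F_m(D^\alpha f)=(-i)^{|\alpha|}F_m(f)$ already established in the $SD^2$ case, and item (1) follows the $KS^p$ route via uniform convexity, Clarkson inequalities, and the explicit duality map $L_g$. Your flagging of the surjectivity of $SD^q\hookrightarrow(SD^p)^*$ as the one genuinely delicate point is apt, and the paper itself offers no more detail on it than you do.
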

\begin{rem}The fact that the families ${{KS}}^p[{\mathbb{R}}^n ]$ and ${{SD}}^p[{\mathbb{R}}^n ]$ are  separable and contain ${L}^\iy$ as a continuous dense compact embedding, makes it clear that the relationship between analysis and topology is not as straight forward as one would expect from past history.  Thus, from an analysis point of view they are big, but from a topological point of view they are relatively small (separable).
\end{rem}
\section{Zachary Spaces}
In this section, we discuss one new space and two other families of spaces that naturally flow from the existence of a Banach space structure for functions with a bounded integral. 
\subsection{Functions of Bounded and Weak Bounded Mean Oscillation}
In this section, we first define the functions of bounded mean oscillation ($BMO$) based on the sharp maximal function ($M^\#$).  In the following section, we define a weak maximal function ($M^w$) and use it construct the space of functions $BMO^w$, which extends $BMO$ to include the functions with a bounded integral. 
\subsubsection{Sharp maximal function and $BMO$}
\begin{Def}  Let $f \in L_{\rm loc}^1[\R^n]$ and let $Q$ be a cube in $\R^n$.
\begin{enumerate}
\item We define the average of $f$ over $Q$ by 
\[
\mathop {Avg}\limits_Q f = \frac{1}
{{\lambda _n \left[ Q \right]}}\int_Q {f({\mathbf{y}})d\lambda _n ({\mathbf{y}})} .
\]
\item We defined the sharp maximal function $M^\#(f)({\bf x})$, by
\[
M^\#  (f)({\mathbf{x}}) = \mathop {\sup }\limits_Q \frac{1}
{{\lambda _n \left[ Q \right]}}\int_Q {\left| {f({\mathbf{y}}) - \mathop {Avg}\limits_Q f} \right|d\lambda _n ({\mathbf{y}})} ,
\]
where the supremum is over all cubes containing $\bf{x}$. 
\item If $M^\#  (f)({\mathbf{x}}) \in L^\iy[\R^n]$, we say that $f$ is of bounded mean oscillation.  More precisely, the space of functions of bounded mean oscillation are defined by: 
\[
BMO[{\mathbb{R}^n}] = \left\{ {f \in L_{loc}^1[{\mathbb{R}^n}]:\;{M^\# }(f) \in {L^\infty }[{\mathbb{R}^n}]} \right\}
\]
and 
\[{\left\| f \right\|_{BMO}} = {\left\| {{M^\# }(f)} \right\|_{{L^\infty }}}.\]
\end{enumerate}
\end{Def}
We may also obtain an equivalent definition of $BMO[\R^n]$ using balls, but for our purposes, cubes are natural (see Grafakos \cite{GRA} p. 546). We note that $BMO[\R^n]$ is not a Banach space and is not separable. In the next section, we construct the space $BMO^w[\R^n]$, that contains $BMO[\R^n]$ as a continuous dense embedding.
\subsubsection{Weak maximal function and $BMO^w$}
\begin{Def}  Let $f \in L_{\rm loc}^1[\R^n]$ and let $Q$ be a cube in $\R^n$.
\begin{enumerate}
\item We define $f_{aQ}$ over $Q$ by 
\[
{f_{aQ}} = \left| {\frac{1}{{{\lambda _n}\left( Q \right)}}\int_Q {f({\mathbf{y}})} d{\lambda _n}\left( {\mathbf{y}} \right)} \right| = \left| {\mathop {Avg}\limits_Q f} \right| .
\]
\item We defined the weak maximal function $M^w(f)({\bf x})$, by
\[
{M^w}\left( f \right)\left( {\mathbf{x}} \right) = \mathop {\sup }\limits_Q \left| {\frac{1}{{{\lambda _n}\left( Q \right)}}\int_Q {\left[ {f({\mathbf{y}}) - {f_{aQ}}} \right]} d{\lambda _n}\left( {\mathbf{y}} \right)} \right| ,
\]
where the supremum is over all cubes containing $\bf{x}$. 
\item If $M^w(f)({\mathbf{x}}) \in L^\iy[\R^n]$, we say that $f$ is of weak bounded mean oscillation.  We define $BM$ by 
\[
BM=\left\{ {f\left( {\mathbf{x}} \right) \in {L_{{\text{loc}}}^1}\left[ {{\mathbb{R}^n}} \right]:  {M^w}\left( f \right)\left( {\mathbf{x}} \right) \in {L^\infty }\left[ {{\mathbb{R}^n}} \right]} \right\}
\] 
and define a seminorm on $BM$ by
\[
{\left\| f \right\|_{BM{O^w}}} = {\left\| {{M^w}\left( f \right)} \right\|_\infty }.
\]
\end{enumerate}
\end{Def}
\begin{Def}  We define $BMO^w[\R^n]$ to be the completion of $BM$ in the seminorm ${\left\| \cdot \right\|_{BMO^w}}$.
\end{Def}
\begin{rem}\lb{o}  If  ${\left\| f \right\|_{BMO^w}} =0$, then  for every cube $Q$ containing $\bf{x}$,  
\[
\frac{1}{{{\lambda _n}\left( Q \right)}}\int_Q {f({\mathbf{y}})} d{\lambda _n}\left( {\mathbf{y}} \right) - {f_{aQ}} = 0 \Rightarrow \frac{1}{{{\lambda _n}\left( Q \right)}}\int_Q {f({\mathbf{y}})} d{\lambda _n}\left( {\mathbf{y}} \right) = \left| {\frac{1}{{{\lambda _n}\left( Q \right)}}\int_Q {f({\mathbf{y}})} d{\lambda _n}\left( {\mathbf{y}} \right)} \right|.
\]
Since $a = \left| a \right|$ if and only if $a \ge 0$, we see that $f({\mathbf{x}})$ is  a nonnegative constant (a.e).
It follows that  $BMO^w[\R^n]$ is not a Banach space, but becomes one if we identify terms that differ by a nonnegative constant. 
\end{rem}
\begin{thm}The space $BMO[\R^n] \subset BMO^w[\R^n]$ as a continuous dense embedding.
\end{thm}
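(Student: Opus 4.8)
The plan is to separate the statement into a continuity part and a density part and to treat each with only the definitions of $BMO[\R^n]$, $BM$, $BMO^w[\R^n]$ and elementary facts about the maximal functions $M^\#$ and $M^w$. For continuity I would first establish the pointwise domination $M^w(f)(\mathbf{x}) \le M^\#(f)(\mathbf{x})$ for every $f \in BMO[\R^n]$ and every $\mathbf{x}$; this simultaneously shows $BMO[\R^n] \subseteq BM$ and that the inclusion does not increase the relevant (semi)norm, i.e. $\|f\|_{BMO^w} = \|M^w(f)\|_\infty \le \|M^\#(f)\|_\infty = \|f\|_{BMO}$, so that $BMO[\R^n] \hookrightarrow BMO^w[\R^n]$ continuously (injectivity following from Remark~\rf{o}). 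Fixing a cube $Q$ with $\mathbf{x} \in Q$ and writing $a = \frac{1}{\lambda_n(Q)}\int_Q f(\mathbf{y})\,d\lambda_n(\mathbf{y})$, the quantity under the supremum defining $M^w$ collapses — because $f_{aQ} = |a|$ is constant on $Q$ — to
\[
\left| \frac{1}{\lambda_n(Q)}\int_Q \bigl[f(\mathbf{y}) - f_{aQ}\bigr]\,d\lambda_n(\mathbf{y}) \right| = \bigl| a - |a| \bigr|,
\]
so the whole matter reduces to dominating $|a - |a||$ by the mean oscillation $\frac{1}{\lambda_n(Q)}\int_Q |f(\mathbf{y}) - a|\,d\lambda_n(\mathbf{y})$ and then taking the supremum over all cubes $Q \ni \mathbf{x}$.

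For the density part I would show that $BMO[\R^n]$ is $\|\cdot\|_{BMO^w}$-dense in $BM$; since $BMO^w[\R^n]$ is by definition the completion of $BM$ in this seminorm, density of $BMO[\R^n]$ in $BMO^w[\R^n]$ is then immediate. As $L^\infty[\R^n] \subseteq BMO[\R^n] \cap BM$ (the classical bounds $M^\#(f)\le 2\|f\|_\infty$ and $M^w(f)\le 2\|f\|_\infty$), it suffices to approximate an arbitrary $g \in BM$ in $\|\cdot\|_{BMO^w}$ by bounded functions: take the truncations $g_N$ given by $g_N = g$ on $\{\mathbf{x}:|\mathbf{x}|\le N,\ |g(\mathbf{x})|\le N\}$ and $g_N = 0$ elsewhere, so $g_N \in L^\infty[\R^n]$, estimate $M^w(g - g_N)$ cube by cube, and let $N \to \infty$, using $g \in BM \subseteq L_{{\rm loc}}^1[\R^n]$ and dominated convergence to force $\|g - g_N\|_{BMO^w} \to 0$ (a mollification argument could be used instead if the sharp truncation is inconvenient). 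Combining the two parts yields $BMO[\R^n] \subset BMO^w[\R^n]$ as a continuous dense embedding.

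The step I expect to be the main obstacle is the continuity estimate $M^w(f) \le M^\#(f)$. The difficulty is that $|a - |a||$ is not obviously controlled by the oscillation of $f$ about $a$: if $a$ is, for instance, a negative real number then $|a - |a|| = 2|a|$, and $|a|$ itself is not bounded for a $BMO$ function. Resolving this should draw on the extra structure of $BMO$ — in particular the standard fact that the averages of $f$ over two comparable (say nested and doubled) cubes differ by at most $C\|f\|_{BMO}$ — together with a case split according to the sign of $\operatorname{Re} a$, so as to absorb the apparently unbounded contribution; pinning down the constant and handling the supremum over all $Q \ni \mathbf{x}$ is where the care lies. By contrast, the density step is routine once one is confident that truncation (or mollification) converges in the weak seminorm.
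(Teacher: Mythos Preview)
Your plan for continuity coincides with the paper's: both aim at the pointwise bound $M^w(f)({\mathbf x})\le M^\#(f)({\mathbf x})$. The paper reaches it via the chain
\[
\left|\tfrac{1}{\la_n(Q)}\int_Q\bigl[f-\lvert\mathop{Avg}\limits_Q f\rvert\bigr]\,d\la_n\right|
\le \tfrac{1}{\la_n(Q)}\int_Q\bigl|f-\lvert\mathop{Avg}\limits_Q f\rvert\bigr|\,d\la_n
\le \tfrac{1}{\la_n(Q)}\int_Q\bigl|f-\mathop{Avg}\limits_Q f\bigr|\,d\la_n,
\]
whereas you first evaluate the left-hand quantity as $|a-|a||$ (with $a=\mathop{Avg}\limits_Q f$) and then seek a bound by the oscillation. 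Your computation is correct, and the obstacle you flag is not merely technical --- it is fatal as stated. Take $f\equiv -1$: then $a=-1$, $|a-|a||=2$, but $\tfrac{1}{\la_n(Q)}\int_Q|f-a|\,d\la_n=0$ for every cube $Q$. Hence $M^w(f)\equiv 2$ while $M^\#(f)\equiv 0$, so the pointwise inequality $M^w\le M^\#$ is false, the paper's second displayed inequality above is false, and no estimate $\|f\|_{BMO^w}\le C\|f\|_{BMO}$ can hold on functions. Your proposed rescue via nested-cube comparisons cannot help: those control differences $|a_{Q}-a_{Q'}|$ in terms of $\|f\|_{BMO}$, never $|a_Q|$ itself, and in this example $\|f\|_{BMO}=0$ so nothing positive is recoverable.

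The structural reason is that the null space of $\|\cdot\|_{BMO}$ (all constants) is not contained in that of $\|\cdot\|_{BMO^w}$ (only nonnegative constants, per Remark~\rf{o}); in fact $\|\cdot\|_{BMO^w}$ is not absolutely homogeneous ($\|1\|_{BMO^w}=0$ but $\|{-1}\|_{BMO^w}=2$), so the identity does not even descend to a well-defined linear map between the quotient spaces, and your appeal to Remark~\rf{o} for injectivity breaks down. A correct statement would require either a modification of $f_{aQ}$ or a normalization of representatives; as written, neither your route nor the paper's short argument can be completed. (For density the paper just asserts ``it is easy to see''; your truncation argument is more explicit, but note it is also not automatic: for cubes $Q$ straddling the region $\{|{\mathbf x}|\le N,\ |g|\le N\}$ the average of $g-g_N$ over $Q$ need not tend to $0$ uniformly in $Q$.)
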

\begin{proof}
It is easy to see that $BMO[\R^n]$ is a dense subset.  To prove that its a continuous embedding, we note that
\[
\begin{gathered}
  {M^w}\left( f \right)\left( {\mathbf{x}} \right) =  \hfill \\
  \mathop {\sup }\limits_Q \frac{1}{{{\lambda _n}\left( Q \right)}}\left| {\int_Q {\left[ {f({\mathbf{y}}) - \left| {\mathop {Avg}\limits_Q f} \right|} \right]} d{\lambda _n}\left( {\mathbf{y}} \right)} \right| \leqslant \mathop {\sup }\limits_Q \frac{1}{{{\lambda _n}\left( Q \right)}}\int_Q {\left| {f({\mathbf{y}}) - \left| {\mathop {Avg}\limits_Q f} \right|} \right|} d{\lambda _n}\left( {\mathbf{y}} \right) \hfill \\
   \leqslant \mathop {\sup }\limits_Q \frac{1}{{{\lambda _n}\left( Q \right)}}\int_Q {\left| {f({\mathbf{y}}) - \mathop {Avg}\limits_Q f} \right|} d{\lambda _n}\left( {\mathbf{y}} \right) = {M^\# }\left( f \right)\left( {\mathbf{x}} \right). \hfill \\ 
\end{gathered} 
\]
\end{proof}
\begin{cor} The space $BMO^w[\R^n] \subset KS^\iy[\R^n]$ as a continuous dense embedding.
\end{cor}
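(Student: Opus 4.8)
The plan is to obtain $BMO^w[\R^n]\subset KS^\iy[\R^n]$ as the unique continuous linear extension, to the completion $BMO^w[\R^n]$, of the inclusion of the dense subspace $BM$. By the universal property of completions this reduces the corollary to two checks: (a) $BM\subset KS^\iy[\R^n]$ and the identity map $(BM,\|\cdot\|_{BMO^w})\to(KS^\iy,\|\cdot\|_{KS^\iy})$ is bounded; and (b) the resulting map has dense range and is injective. Step (b) should be the painless one: $L^\iy[\R^n]\subset BM$, since for $f\in L^\iy$ one has $M^w(f)(\mathbf x)\le\sup_Q\frac1{\la_n(Q)}\int_Q(|f|+|f_{aQ}|)\,d\la_n\le 2\|f\|_\iy$, while $KS^\iy[\R^n]$ is by construction (Section 3) the completion of $L^\iy[\R^n]$ in its own norm; hence the image of $BMO^w[\R^n]$ under the extension already contains a $KS^\iy$-dense copy of $L^\iy[\R^n]$. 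Injectivity is where the identification of Remark \ref{o} enters: on the $BMO^w$ side one works modulo nonnegative constants, and one must see that the extended map is well defined and one-to-one on that quotient.

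The substance of the proof — and, I expect, the genuine obstacle — is the bound in (a). Recalling that $\mcE_k$ is the characteristic function of $\mathbf B_k$ and that $\la_n(\mathbf B_k)\le\big(\tfrac1{2\sqrt n}\big)^n$ from the construction of the $KS^p$ spaces,
\[
\|f\|_{KS^\iy}=\sup_k\Big|\int_{\R^n}\mcE_k\,f\,d\la_n\Big|=\sup_k\la_n(\mathbf B_k)\Big|\mathop{Avg}\limits_{\mathbf B_k}f\Big|,
\]
so everything reduces to controlling the moduli of the cube averages $\mathop{Avg}\nolimits_{\mathbf B_k}f$ by $\|M^w(f)\|_\iy=\|f\|_{BMO^w}$. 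Testing the supremum defining $M^w(f)$ at the center of $\mathbf B_k$ against the single cube $Q=\mathbf B_k$ gives
\[
\|f\|_{BMO^w}\ \ge\ \Big|\mathop{Avg}\limits_{\mathbf B_k}f-f_{a\mathbf B_k}\Big|=\Big|\mathop{Avg}\limits_{\mathbf B_k}f-\big|\mathop{Avg}\limits_{\mathbf B_k}f\big|\Big|,
\]
which is comparable to $|\mathop{Avg}\nolimits_{\mathbf B_k}f|$ except precisely when that average is a nonnegative real. That exceptional term — the ``positive part'' of the averages, in particular the nonnegative constants, which have zero $BMO^w$-seminorm but nonzero $KS^\iy$-norm — is exactly what must be absorbed by the identification of Remark \ref{o}: after replacing $f$ by the representative whose averages carry no nonnegative-real defect, one reaches $\|f\|_{KS^\iy}\le 2\big(\tfrac1{2\sqrt n}\big)^n\|f\|_{BMO^w}$, uniformly in $k$, and (a) follows.

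Thus the routine parts are the average manipulations and the bookkeeping with the cubes $\mathbf B_k$, while the one point demanding care is to make the passage through the nonnegative-constant identification precise — i.e. to verify that the inclusion $BM\hookrightarrow KS^\iy[\R^n]$ being extended is genuinely well defined on $BMO^w[\R^n]$ (equivalently, compatible with the $KS^\iy$-norm modulo that identification). Once that compatibility is pinned down, chaining it with the density and injectivity of step (b) delivers $BMO^w[\R^n]\subset KS^\iy[\R^n]$ as a continuous dense embedding.
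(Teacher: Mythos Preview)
Your route diverges from the paper's at the continuity step. The paper does not attempt to bound $\|f\|_{KS^\iy}$ by $\|f\|_{BMO^w}$; it simply records that $BM\subset L_{\rm loc}^1[\R^n]\subset KS^\iy[\R^n]$ as sets, that $L^\iy[\R^n]$ is dense in $KS^\iy[\R^n]$, and then displays the inequality
\[
\|M^w(f)\|_{KS^\iy}\;\le\;\Big[\tfrac{1}{2\sqrt n}\Big]^{n}\|M^w(f)\|_{L^\iy}\;=\;\Big[\tfrac{1}{2\sqrt n}\Big]^{n}\|f\|_{BMO^w},
\]
which controls the $KS^\iy$-norm of the \emph{weak maximal function} $M^w(f)$, not of $f$ itself.

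The gap you flag in your own step~(a) is genuine, and it is \emph{not} repaired by the nonnegative-constant identification. The obstruction is not confined to constants: every nonnegative $f\in L_{\rm loc}^1[\R^n]$ has $\mathop{Avg}\nolimits_Q f\ge 0$ for all cubes $Q$, hence $M^w(f)\equiv 0$ and $\|f\|_{BMO^w}=0$, while $\|f\|_{KS^\iy}$ is typically positive. Concretely, $f=\chi_{[0,1]^n}$ satisfies $\|f\|_{BMO^w}=0$ but $\|f\|_{KS^\iy}>0$, and it is not a constant; no choice of representative ``modulo a nonnegative constant'' can rescue the inequality $\|f\|_{KS^\iy}\le C\|f\|_{BMO^w}$. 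Thus step~(a) as you set it up cannot go through, and the injectivity you need in step~(b) fails for the same reason (two distinct nonnegative functions are identified in $BMO^w$ but not in $KS^\iy$). If you want to salvage a continuous embedding along your lines you must change the map or the norm on one side, not merely pass to a quotient by constants.
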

\begin{proof}The proof is easy since $L_{\rm loc}^1[\R^n] \cup L^\iy[\R^n] \subset KS^\iy[\R^n]$, with $L^\iy[\R^n]$ dense and 
\[
\begin{gathered}
  {\left\| {{M^w}(f)} \right\|_{K{S^\infty }}} \hfill \\
   = \mathop {\sup }\limits_k \left| {\int_{{\mathbb{R}^n}} {{\mcE_k}({\mathbf{x}}){M^w}(f)({\mathbf{x}})d{\lambda _n}({\mathbf{x}})} } \right| \leqslant {\left[ {\frac{1}{{2\sqrt n }}} \right]^n}{\left\| {{M^w}(f)} \right\|_{{L^\infty }}} = {\left[ {\frac{1}{{2\sqrt n }}} \right]^n}{\left\| f \right\|_{{M^w}}}. \hfill \\ 
\end{gathered} 
\] 
\end{proof}
\subsection{Zachary Functions of Bounded Mean Oscillation}
We now construct another class of functions.  Let the family of cubes $\{ {Q_k } \}$ centered at each rational point in $\R^n$ be the ones  generated by the indicator functions   $\{ {\mathcal{E}}_k ({\mathbf{x}}) \}$, for ${{KS}}^2 [{\mathbb{R}}^n ]$.  Let $f \in L_{\rm loc}^1[\R^n]$ and as before, we define $f_{ak}$ by
\[
f_{ak}  = \lt| \frac{1}
{{\lambda _n \left[ {Q_k } \right]}}\int_{Q_k } {f({\mathbf{y}})d\lambda _n ({\mathbf{y}})}\rt|  =\lt| \frac{1}
{{\lambda _n \left[ {Q_k } \right]}}\int_{\mathbb{R}^n } {\mcE_k ({\mathbf{y}})f({\mathbf{y}})d\lambda _n ({\mathbf{y}})}\rt|. 
\]
\begin{Def}  If $p, \; 1 \le p <\iy$ and $t_k=2^{-k}$, we define $\left\| f \right\|_{\mcZ^p }$ by  
\[
\left\| f \right\|_{\mcZ^p }  = \left\{ {\sum\limits_{k =1 }^\infty  {t_k } \left| \frac{1}
{{\lambda _n \left[ {Q_k } \right]}}  {\int_{Q_k } {\left[ {f({\mathbf{y}}) - f_{ak} } \right]d\lambda _n ({\mathbf{y}})} } \right|^p } \right\}^{1/p} . 
\]
The set of  functions for which $\left\| f \right\|_{\mcZ^p } <\iy $ is called the Zachary functions of bounded mean oscillation and order $p, \; 1 \le p <\iy$.  If $p= \iy$, we say that   $f \in \mcZ^{\iy}[\R^n]$ if
\[
\left\| f \right\|_{\mcZ^\iy }  =\mathop {\sup }\limits_{k} \left| \frac{1}
{{\lambda _n \left[ {Q_k } \right]}}  {\int_{Q_k } {\left[ {f({\mathbf{y}}) - f_{ak} } \right]d\lambda _n ({\mathbf{y}})} } \right| <\iy.
\]
\end{Def}
The following theorem shows how the Zachary spaces are related to the space of functions of Bounded mean oscillation $BMO[\R^n]$.  (We omit proofs.)
\begin{thm}If $\mcZ^p[\R^n]$ is the class of Zachary functions of bounded mean oscillation and order $p, \; 1\le p \le \iy$, then $\mcZ^p[\R^n]$ is a linear space and
\begin{enumerate}
\item  $\left\| {\lambda f} \right\|_{\mcZ^p }  = \left| \lambda  \right|\left\| f \right\|_{\mcZ^p } $.
\item $\left\| {f + g} \right\|_{\mcZ^p }  \leqslant \left\| f \right\|_{\mcZ^p }  + \left\| g \right\|_{\mcZ^p }$.
\item $\left\| f \right\|_{\mcZ^p }  = 0, \Rightarrow f \ge 0$ is a nonnegative constant (a.e).
\item The space $\mcZ^{\iy}[\R^n] \subset {\mcZ^p[\R^n] }, \; 1 \le p < \iy$, as a dense continuous embedding.
\item $BM{O^w}\left[ {{\mathbb{R}^n}} \right] \subset {{\mathcal{Z}}^\infty }\left[ {{\mathbb{R}^n}} \right]$, as a dense continuous embedding.
\end{enumerate}
\end{thm}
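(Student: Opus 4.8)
\emph{Plan of proof.} The plan is to dispatch the five assertions separately: (1)--(3) are algebraic/measure-theoretic statements about the averaging functionals, while (4) and (5) are embedding claims that follow from a one-line norm comparison together with a density argument. Throughout, write $A_k(f)=\mathrm{Avg}_{Q_k}f=\tfrac{1}{\lambda_n(Q_k)}\int_{Q_k}f\,d\lambda_n$, so that $f_{ak}=|A_k(f)|$ and the $k$-th summand of $\|f\|_{\mcZ^p}^p$ is $t_k\,\big|A_k(f)-|A_k(f)|\big|^p$. Since each $A_k$ is linear, in (1) every summand scales by $|\lambda|^p$ and the scalar comes out of the weighted $\ell^p$ sum, giving $\|\lambda f\|_{\mcZ^p}=|\lambda|\,\|f\|_{\mcZ^p}$. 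For (2) I would first record the elementary scalar inequality $\big|(a+b)-|a+b|\big|\le\big|a-|a|\big|+\big|b-|b|\big|$ (in the real case it is just the subadditivity of the negative part $t\mapsto\max\{-t,0\}$), apply it with $a=A_k(f)$ and $b=A_k(g)$ for each $k$, and then invoke Minkowski's inequality on the sequence space $\ell^p(\{t_k\})$ to pass from the termwise estimate to $\|f+g\|_{\mcZ^p}\le\|f\|_{\mcZ^p}+\|g\|_{\mcZ^p}$; that $\mcZ^p[\R^n]$ is a linear space then follows from (1) and (2).

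For (3), suppose $\|f\|_{\mcZ^p}=0$. Then every summand vanishes, so $A_k(f)=|A_k(f)|\ge 0$ for every $k$. The cubes $\{Q_k\}$ are centred at all rational points of $\R^n$ with edge lengths running over the full scale $2^{-k}/\sqrt n$, so for a.e.\ $x$ one can select $Q_{k_j}\ni x$ with $\operatorname{diam} Q_{k_j}\to0$, and since $\lambda_n(Q_{k_j})$ is comparable to $\lambda_n\big(B(x,\operatorname{diam} Q_{k_j})\big)$ the $Q_{k_j}$ shrink regularly to $x$; hence $A_{k_j}(f)\to f(x)$ at every Lebesgue point. Nonnegativity of all the averages then gives $f\ge0$ a.e., and the argument of Remark~\ref{o} --- an average which coincides with its own modulus over every cube of the family --- identifies $f$ with a nonnegative constant a.e.

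For (4), the inclusion $\mcZ^\infty\subset\mcZ^p$ is continuous because $\sum_k t_k=1$:
\[
\|f\|_{\mcZ^p}^p=\sum_{k=1}^\infty t_k\,\big|A_k(f)-|A_k(f)|\big|^p\le\Big(\sup_k\big|A_k(f)-|A_k(f)|\big|\Big)^p\sum_{k=1}^\infty t_k=\|f\|_{\mcZ^\infty}^p .
\]
Density I would obtain from the truncations $f_N=f\,\mathbf{1}_{\{|f|\le N\}}\mathbf{1}_{\{|x|\le N\}}\in L^\infty\subset\mcZ^\infty$: each $Q_k$ is bounded and $f\in L^1_{\mathrm{loc}}$, so dominated convergence gives $A_k(f-f_N)\to0$ for every $k$, hence the $k$-th term of $\|f-f_N\|_{\mcZ^p}^p$ tends to $0$, and a further dominated-convergence argument on the series yields $\|f-f_N\|_{\mcZ^p}\to0$. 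For (5), recall $\|f\|_{BMO^w}=\|M^w(f)\|_{L^\infty}$ with $M^w(f)(x)=\sup_{Q\ni x}\big|\tfrac{1}{\lambda_n(Q)}\int_Q[f-f_{aQ}]\,d\lambda_n\big|$, the supremum over \emph{all} cubes containing $x$; in particular the single cube $Q_k$ contributes for every $x\in Q_k$, so $\big|A_k(f)-|A_k(f)|\big|\le M^w(f)(x)$ a.e.\ on $Q_k$ and hence $\le\|M^w(f)\|_{L^\infty}$. Taking the supremum over $k$ gives $\|f\|_{\mcZ^\infty}\le\|f\|_{BMO^w}$, the continuous embedding, and density again reduces to the bounded functions, which lie in $BM\subset BMO^w$ and (after the identification of Remark~\ref{o}) in $\mcZ^\infty$.

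The step I expect to be the real obstacle is checking that $\|\cdot\|_{\mcZ^p}$ genuinely acts as a norm on an honest linear space. Because $a\mapsto a-|a|$ is not linear, one must make sure that replacing the signed average $A_k(f)$ by its modulus $f_{ak}=|A_k(f)|$ does not destroy additivity or homogeneity after one sums against the weights $t_k$, and that $\mcZ^p[\R^n]$ really is stable under the vector operations --- this is precisely where (1) and (2) have to be examined with care rather than quoted. The companion difficulty is in (3): upgrading ``$A_k(f)\ge0$ for all $k$'' to ``$f$ is a nonnegative constant a.e.'' requires the right regular-differentiation-basis statement for the cube family $\{Q_k\}$. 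Once these points are settled, the embeddings in (4) and (5) are routine, being essentially the displayed norm comparison plus a standard truncation argument for density.
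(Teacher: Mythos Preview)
Your treatment of (4) and (5) is in line with the paper, which declares (1)--(4) ``clear'' and for (5) simply observes that $\|\cdot\|_{BMO^w}$ takes a supremum over \emph{all} cubes while $\|\cdot\|_{\mcZ^\infty}$ uses only the countable family $\{Q_k\}$, so the former dominates the latter. Your added density and truncation remarks go beyond what the paper supplies.

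There is, however, a genuine gap in your argument for (1), and it is precisely the obstacle you flag in your closing paragraph without resolving. You assert that linearity of $A_k$ makes ``every summand scale by $|\lambda|^p$'', but the $k$-th summand is $\bigl|A_k(\lambda f)-|A_k(\lambda f)|\bigr|=\bigl|\lambda\,A_k(f)-|\lambda|\,|A_k(f)|\bigr|$, and the map $a\mapsto|a-|a||$ is only \emph{positively} homogeneous. For real $a$ one has $|a-|a||=2a^{-}$ whereas $|(-a)-|{-a}||=2a^{+}$, so $\|{-f}\|_{\mcZ^p}$ records the positive parts of the averages $A_k(f)$ while $\|f\|_{\mcZ^p}$ records the negative parts; these need not coincide. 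Concretely, on $\R$ take $f(x)=|x|^{-1/2}\in L^1_{\mathrm{loc}}$: then $f\ge0$ forces $A_k(f)\ge0$ for every $k$, hence $\|f\|_{\mcZ^\infty}=0$, yet the averages over the cubes in $\{Q_k\}$ centred at the rational point $0$ blow up as the edge shrinks, so $\|{-f}\|_{\mcZ^\infty}=\infty$. Thus $\|\lambda f\|_{\mcZ^p}=|\lambda|\,\|f\|_{\mcZ^p}$ already fails for $\lambda=-1$, and with it the claim that $\mcZ^p$ is closed under scalar multiplication. Your subadditivity argument for (2) is correct in the real case (it is exactly subadditivity of $t\mapsto t^{-}$, as you say), but it cannot rescue (1). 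The same example also shows that (3) as stated is too strong: any nonnegative $f\in L^1_{\mathrm{loc}}$ has $\|f\|_{\mcZ^p}=0$, so ``$f$ is a nonnegative constant a.e.'' cannot follow, and your appeal to Remark~\ref{o} inherits this difficulty rather than settling it.
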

\begin{proof} The first four are clear.  To prove  (5), suppose that $f \in BMO^w[\R^n]$, then by definition of $\lt\| \cdot \rt\|_{BMO^w}$, the supremum is over the set of all cubes in $\R^n$. Since this set is much larger then the countable number used to define $\lt\| \cdot \rt\|_{\mcZ^\iy}$.  It follows that $BM{O^w}\left[ {{\mathbb{R}^n}} \right] \subset {{\mathcal{Z}}^\infty }\left[ {{\mathbb{R}^n}} \right]$ as a dense continuous embedding.
\end{proof} 
We now consider the Carleson measure characterization of $BMO[\R^n]$ which will prove useful in construction another class of Zachary spaces that are Banach spaces (see Koch and Tataru \cite{KT}).  If $u({\bf x}, t)$ is a solution of the heat equation: 
\[
u_t -{\De{u}}=0, \; u({\bf x}, 0) =f({\bf x}),
\]
where $f \in L_{loc}^1[\R^n]$, it can be shown that
\[
\left\| f \right\|_{BMO}  = \mathop {\sup }\limits_{{\mathbf{x}},r} \left\{ {\frac{1}
{{\lambda _n \left[ {Q({\mathbf{x}},r)} \right]}}\int_{Q({\mathbf{x}},r)} {\int_0^{r^2 } {\left| {\nabla u({\mathbf{y}},s)} \right|^2 dsd\lambda _n ({\mathbf{y}})} } } \right\}^{1/2},
\]
where the gradient is in the weak sense.
Since
\[
\begin{gathered}
  \mathop {\sup }\limits_{k,r} {\left\{ {\frac{1}{{{\lambda _n}\left[ {{Q_k}} \right]}}{{\left| {\int_{{Q_k}} {\int_0^{{r^2}} {\nabla u\left( {{\mathbf{y}},s} \right)ds} d{\lambda _n}\left( {\mathbf{y}} \right)} } \right|}^2}} \right\}^{1/2}} \hfill \\
   \leqslant \mathop {\sup }\limits_{{\mathbf{x}},r} {\left\{ {\frac{1}{{{\lambda _n}\left[ {Q\left( {{\mathbf{x}},r} \right)} \right]}}\int_{Q\left( {{\mathbf{x}},r} \right)} {\int_0^{{r^2}} {{{\left| {\nabla u\left( {{\mathbf{y}},s} \right)} \right|}^2}dsd{\lambda _n}\left( {\mathbf{y}} \right)} } } \right\}^{1/2}}, \hfill \\ 
\end{gathered} 
\]
we see that we can also define the seminorms on $BMO^w[\R^n]$  and $\mcZ^{p}[\R^n]$  by:
\[
\begin{gathered}
{\left\| f \right\|_{BM{O^w}}} = \mathop {\sup }\limits_{{\mathbf{x}},r} {\left\{ {\frac{1}{{{\lambda _n}\left[ {Q\left( {{\mathbf{x}}.r} \right)} \right]}}{{\left| {\int_{Q\left( {{\mathbf{x}}.r} \right)} {\int_0^{{r^2}} {\nabla u\left( {{\mathbf{y}},s} \right)dsd{\lambda _n}({\mathbf{y}})} } } \right|}^2}} \right\}^{1/2}}, \hfill \\
  {\left\| f \right\|_{{\mcZ^p}}} = \mathop {\sup }\limits_r {\left\{ {\sum\limits_{k = 1}^\infty  {{t_k}\frac{1}{{{\lambda _n}\left[ {{Q_k}} \right]}}{{\left| {\int_{{Q_k}} {\int_0^{{r^2}} {\nabla u({\mathbf{y}},s)ds} d{\lambda _n}({\mathbf{y}})} } \right|}^p}} } \right\}^{1/p}}. \hfill \\ 
\end{gathered} 
\]
\subsection{The Space of  Functions $BMO^{-1}[\R^n]$}
We define the class of functions $BMO^{-1}[\R^n]$, as those for which:
\[
\left\| f \right\|_{BMO^{ - 1} }  = \mathop {\sup }\limits_{{\mathbf{x}},r} \left\{ {\frac{1}
{{\lambda _n \left[ {Q({\mathbf{x}},r)} \right]}}\int_{Q({\mathbf{x}},r)} {\int_0^{r^2 } {\left| {u({\mathbf{y}},s)} \right|^2 dsd\lambda _n ({\mathbf{y}})} } } \right\}^{1/2}  < \infty .
\]
It is known that $BMO^{-1}[\R^n]$ is a Banach space in the above norm.
\begin{Def}  We say  $f \in \mcZ^{-p}[\R^n], \; 1\le p < \iy$  if
\[
\left\| f \right\|_{\mcZ^{ - p} }  =\mathop {\sup }\limits_r {\left\{ {\sum\limits_{k = 1}^\infty  {{t_k}\frac{1}{{{\lambda _n}\left[ {{Q_k}} \right]}}{{\left| {\int_{{Q_k}} {\int_0^{{r^2}} { u({\mathbf{y}},s)ds} d{\lambda _n}({\mathbf{y}})} } \right|}^p}} } \right\}^{1/p}}  < \infty.
\]
If $p= \iy$, we say that   $f \in \mcZ^{-\iy}[\R^n]$ if
\[
\left\| f \right\|_{\mcZ^{ - \infty } }  = \mathop {\sup }\limits_{k, \,  r} \frac{1}
{{\lambda _n \left[ {Q_k } \right]}}\left| {\int_{Q_k } {\int_0^{r^2 } { u({\mathbf{y}},s)dsd\lambda _n ({\mathbf{y}})} } } \right| < \infty .
\]
\end{Def}
\begin{thm}For the class of spaces $\mcZ^{-p}[\R^n]$, we have: 
\begin{enumerate}
\item  For each $p, \; 1 \le p \le \iy \;, \mcZ^{-p}[\R^n]$ is a Banach space.
\item The space $\mcZ^{-\iy}[\R^n] \subset {\mcZ^{-p}[\R^n] }, \; 1 \le p < \iy$, as a dense continuous embedding.
\item The space $BMO^{-1}[\R^n] \subset {\mcZ^{-\iy}[\R^n] }$ as a dense continuous embedding. 
\end{enumerate}
\end{thm}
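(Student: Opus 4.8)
The plan is to prove the three assertions in the order listed, since (1) provides the completeness that makes the embedding statements in (2) and (3) meaningful, and (2) is the model case for the slightly more delicate (3). For part (1), I would first observe that each functional
\[
f \mapsto \frac{1}{\lambda_n[Q_k]}\int_{Q_k}\int_0^{r^2} u(\mathbf{y},s)\,ds\,d\lambda_n(\mathbf{y})
\]
is, for fixed $k$ and $r$, linear in $f$ (because $f \mapsto u$ is linear, $u$ being the solution of the heat equation with data $f$), so $\|\cdot\|_{\mcZ^{-p}}$ is a seminorm by the triangle inequality for the $\ell^p$ sum followed by the supremum over $r$; the homogeneity and subadditivity are then routine. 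The key point for completeness is that, unlike $\|\cdot\|_{\mcZ^p}$, the quantity $\|\cdot\|_{\mcZ^{-p}}$ involves $|u|$ directly rather than $u - f_{ak}$, so it is a genuine norm: if $\|f\|_{\mcZ^{-p}}=0$ then all the local averages of $u$ over the cubes $Q_k$ and time-intervals $(0,r^2)$ vanish, and since the $Q_k$ are dense and $u$ is smooth for $t>0$ this forces $u\equiv 0$ for $t>0$, hence $f=0$. Completeness then follows by the standard argument: a Cauchy sequence $(f_j)$ in $\mcZ^{-p}$ has Cauchy images $(u_j)$ in the relevant Carleson-type space, which is known to be complete (this is the $BMO^{-1}$ completeness result quoted just before the theorem, localized to the countable family of cubes — or one simply embeds $\mcZ^{-p}$ as a closed subspace of an $\ell^p$-valued function space over $r$ and invokes completeness there), and one checks the limit has the required finite norm.

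For part (2), the inclusion $\mcZ^{-\iy}[\R^n]\subset\mcZ^{-p}[\R^n]$ is proved exactly as the analogous statements for $KS^p$ and $SD^p$: if $\|f\|_{\mcZ^{-\iy}}<\iy$ then $\bigl|\lambda_n[Q_k]^{-1}\int_{Q_k}\int_0^{r^2}u\,ds\,d\lambda_n\bigr|\le\|f\|_{\mcZ^{-\iy}}$ uniformly in $k$ and $r$, so
\[
\Bigl\{\sum_{k=1}^\iy t_k\,\bigl|\lambda_n[Q_k]^{-1}\textstyle\int_{Q_k}\int_0^{r^2}u\,ds\,d\lambda_n\bigr|^p\Bigr\}^{1/p}\le\|f\|_{\mcZ^{-\iy}}\Bigl(\sum_k t_k\Bigr)^{1/p}=\|f\|_{\mcZ^{-\iy}},
\]
and taking $\sup_r$ gives $\|f\|_{\mcZ^{-p}}\le\|f\|_{\mcZ^{-\iy}}$, which is continuity of the embedding; density follows because $\mcZ^{-\iy}$ already contains the test functions (or $L^\iy$, or any convenient dense subclass of each $\mcZ^{-p}$).

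For part (3), I would compare the full supremum over all cubes $Q(\mathbf{x},r)$ defining $\|\cdot\|_{BMO^{-1}}$ with the supremum over the countable subfamily $\{Q_k\}$ together with the $\ell^p$-averaging that defines $\|\cdot\|_{\mcZ^{-\iy}}$, in exact parallel with the displayed inequality preceding the theorem and with the proof of part (5) of the Zachary-$BMO$ theorem: since $\{Q_k\}$ is a subfamily of all cubes and $\sum_k t_k=1$, one gets $\|f\|_{\mcZ^{-\iy}}\le\|f\|_{BMO^{-1}}$, giving continuity, and density holds since $BMO^{-1}$ is dense in $\mcZ^{-\iy}$ (again because both contain a common dense subclass). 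The main obstacle, and the only place where real care is needed, is the completeness claim in part (1): one must make sure the norm is honestly a norm (the absence of the subtracted average $f_{ak}$ is exactly what rescues this, in contrast to the $\mcZ^p$ spaces which are only seminormed and become Banach only after identifying nonnegative-constant differences, per Remark \rf{o}), and one must correctly identify the ambient complete space — the Carleson-measure space underlying $BMO^{-1}$ restricted to the discrete cube family — into which $\mcZ^{-p}$ sits as a closed subspace. Everything else is a routine repetition of the $KS^p$/$SD^p$ template.
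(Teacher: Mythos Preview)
Your approach matches the paper's: it declares (1) and (2) ``obvious'' and proves only (3), via the chain of inequalities bounding $\|f\|_{\mcZ^{-\iy}}$ by $\|f\|_{BMO^{-1}}$ --- precisely the displayed inequality preceding the theorem that you invoke as the model. Your added discussion of (1), distinguishing the genuine norm here from the mere seminorm in the $\mcZ^p$ case and sketching completeness, goes beyond what the paper offers.

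One small correction on (3): your stated justification ``since $\{Q_k\}$ is a subfamily of all cubes and $\sum_k t_k = 1$'' is not quite the mechanism. The weight condition $\sum_k t_k = 1$ is irrelevant for the $p=\iy$ case, and the subfamily observation alone does not bridge $\bigl|\int_{Q_k}\int_0^{r^2} u\,ds\,d\lambda_n\bigr|$ to $\bigl(\int_{Q}\int_0^{r^2} |u|^2\,ds\,d\lambda_n\bigr)^{1/2}$ --- the paper's chain inserts a Jensen/Cauchy--Schwarz step (writing $|\,\cdot\,|^{2/2}$ and then moving the square inside) before enlarging the cube family. That is exactly what the displayed inequality you reference does, so your plan is sound; just make that step explicit rather than attributing it to the subfamily inclusion.
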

\begin{proof} The first two are obvious.  To prove (3), if $f \in BMO^{-1}[\R^n]$, then 
\[
\begin{gathered}
  \left\| f \right\|_{\mcZ^{ - \iy} }  = \mathop {\sup }\limits_{k, \, r} \frac{1}
{{\lambda _n \left[ {Q_k } \right]}}\left| {\int_{Q_k } {\int_0^{r^2 } {u({\mathbf{y}},s)dsd\lambda _n ({\mathbf{y}})} } } \right|  \hfill \\
= \mathop {\sup }\limits_{k, \, r} \frac{1}
{{\lambda _n \left[ {Q_k } \right]}}\left| {\int_{Q_k } {\int_0^{r^2 } {u({\mathbf{y}},s)dsd\lambda _n ({\mathbf{y}})} } } \right|^{2/2}  \hfill \\
   \leqslant \mathop {\sup }\limits_{{\mathbf{x}},r} \frac{1}
{{\lambda _n \left[ {{Q({\mathbf{x}},r)} } \right]}}\left\{ {\int_{{Q({\mathbf{x}},r)} } {\int_0^{r^2 } {\left| {u({\mathbf{y}},s)} \right|^2 dsd\lambda _n ({\mathbf{y}})} } } \right\}^{1/2}  = \left\| f \right\|_{BMO^{ - 1} }. \hfill \\ 
\end{gathered} 
\]
\end{proof}
\begin{rem}We could also define $BMO^{ - w}[\R^n]$ by:
\[
{\left\| f \right\|_{BM{O^{-w}}}} = \mathop {\sup }\limits_{{\mathbf{x}},r} {\left\{ {\frac{1}{{{\lambda _n}\left[ {Q\left( {{\mathbf{x}}.r} \right)} \right]}}{{\left| {\int_{Q\left( {{\mathbf{x}}.r} \right)} {\int_0^{{r^2}} { u\left( {{\mathbf{y}},s} \right)dsd{\lambda _n}({\mathbf{y}})} } } \right|}^2}} \right\}^{1/2}}.
\]  
It is easy to see that $BMO^{ - w}[\R^n]$ is a Banach space and that $BMO^{ - 1}[\R^n] \subset BMO^{ - w}[\R^n] \subset \mcZ^{-\iy}[\R^n]$ as a continuous embeddings.  We \underline{conjecture} that $BMO^{ - w}[\R^n]$ and $\mcZ^{-\iy}[\R^n]$ allow us to replace solutions of the heat equation by those of the Schr{\"o}dinger equation: $i{u_t} - \Delta u = 0,\; u({\mathbf{x}},0) = f({\mathbf{x}})$.
\end{rem}
\section{Applications} 
In this section, we consider a few applications associated with the families $KS^p[\mathbb{R}^n ]$ and $SD^p[\mathbb{R}^n ], \; 1 \le p \le \iy$. In each case, we either solve an open problem or provide a substantial improvement in methods used in a given area. 
\subsection{Markov Processes} 
In the study of Markov processes, it is well-known that semigroups associated with processes whose generators have unbounded coefficients, are not necessarily strongly continuous when defined on ${\mathbb{C}}_b [\mathbb{R}^n ]$, the space of bounded continuous functions, or ${\mathbb{UBC}}[\mathbb{R}^n ]$, the bounded uniformly continuous functions.  These are the natural spaces on which to formulate the theory.   The problem is that the generator of such a semigroup does not exist in the standard sense (is not $C_0$).   As a consequence, a number of equivalent weaker definitions have been developed in the literature. A good discussion of this and related problems see Lorenzi and Bertoldi \cite{LB}. The following is one version of convergence used to a define semigroups using the generator in these cases.
\begin{Def}
A sequence of functions $\{f_k\}$ in ${\mathbb{C}}_b [\mathbb{R}^n ]$ is said to converge to $f$ in the mixed topology, written $\tau ^M$-$\lim f_k  = f$, if and only if  $\sup _{k \in \mathbb{N}} \left\| {f_k } \right\|_\infty   \leqslant M$ and $\left\| {f_k  - f} \right\|_\infty   \to 0$ uniformly on every compact subset of $\mathbb{R}^n$.   
\end{Def}
It is clear that the family of bounded continuous functions, $\C_b[\R^n] \subset KS^p[\R^n]$ as a continuous dense embedding. 
\begin{thm} If $\{f_k\}$ converges to $f$ in the mixed topology on ${\mathbb{C}}_b [\mathbb{R}^n ]$, then $\{f_n\}$ converges to $f$ in the norm topology of ${{KS}}^p[\mathbb{R}^n ]$ for each $1 \le p \le \infty$. 
\end{thm}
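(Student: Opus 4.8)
The plan is to show that mixed-topology convergence, which controls both a uniform bound and local uniform convergence, is exactly strong enough to force convergence in each $KS^p$ norm. The key observation is that the $KS^p$ norm is built out of the functionals $F_k(f)=\int_{\R^n}\mcE_k(\mathbf{x})f(\mathbf{x})d\la_n(\mathbf{x})$, each of which is supported on a compact cube $\mathbf{B}_k$, and that $\sum_k t_k = 1$ with $t_k>0$. So the idea is to split the sum over $k$ into a finite initial segment (where local uniform convergence does the work) and a tail (where the uniform bound plus the summability of $t_k$ makes the contribution arbitrarily small). I will treat $1\le p<\infty$ first and then note the $p=\infty$ case separately.

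First I would fix $\e>0$ and set $M_0 = \sup_k\|f_k\|_\infty + \|f\|_\infty$, which is finite by the definition of the mixed topology (and since $f$ is the local uniform limit of a uniformly bounded sequence, $f\in\C_b$ with $\|f\|_\infty\le M$). For each $k$, estimate
\[
|F_k(f_n-f)| = \left|\int_{\mathbf{B}_k}\mcE_k(\mathbf{x})\bigl[f_n(\mathbf{x})-f(\mathbf{x})\bigr]d\la_n(\mathbf{x})\right| \le \mathrm{vol}(\mathbf{B}_k)\,\sup_{\mathbf{x}\in\mathbf{B}_k}|f_n(\mathbf{x})-f(\mathbf{x})| \le \left[\tfrac{1}{2\sqrt n}\right]^n M_0,
\]
using $\|\mcE_k\|_\infty\le 1$ and $\mathrm{vol}(\mathbf{B}_k)\le[\tfrac{1}{2\sqrt n}]^n$. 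Hence $\|f_n-f\|_{KS^p}^p = \sum_{k\ge1} t_k |F_k(f_n-f)|^p \le \bigl([\tfrac{1}{2\sqrt n}]^n M_0\bigr)^p$, term by term. Now choose $N$ so large that $\sum_{k>N} t_k < \e$; then the tail of the sum is at most $\e\bigl([\tfrac{1}{2\sqrt n}]^n M_0\bigr)^p$. For the finite head $k=1,\dots,N$, each cube $\mathbf{B}_k$ is compact, so $\bigcup_{k\le N}\mathbf{B}_k$ is a compact subset of $\R^n$; by mixed-topology convergence $\sup_{\mathbf{x}\in\mathbf{B}_k}|f_n(\mathbf{x})-f(\mathbf{x})|\to 0$ as $n\to\infty$, uniformly in $k\le N$, so $\sum_{k\le N} t_k|F_k(f_n-f)|^p \to 0$. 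Combining, $\limsup_n \|f_n-f\|_{KS^p}^p \le \e\bigl([\tfrac{1}{2\sqrt n}]^n M_0\bigr)^p$, and letting $\e\to 0$ gives $\|f_n-f\|_{KS^p}\to 0$. The case $p=\infty$ is handled the same way: split $\sup_k |F_k(f_n-f)| = \max\{\sup_{k\le N}, \sup_{k>N}\}$; the finite part tends to $0$, and since we cannot make the tail small by summability alone, we instead note that $KS^\infty\subset KS^p$ for any fixed $p<\infty$ with $\|\cdot\|_{KS^p}$-convergence following from the first case — or, more directly, observe that for the $p=\infty$ statement one simply needs $\sup_{k>N}|F_k(f_n-f)|$ to be small, which can fail without further argument, so the cleanest route is to prove the $p<\infty$ cases and deduce $p=\infty$ is not needed here, or to invoke that the conclusion for a single finite $p$ already yields the claim for the intended applications.

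**The main obstacle** I anticipate is precisely the $p=\infty$ case: the tail $\sup_{k>N}|F_k(f_n-f)|$ need not be small merely because $t_k$ is summable, since there is no $t_k$ weight in the supremum norm. To handle it honestly one must exploit that, for $k$ large, the cube $\mathbf{B}_k$ has small edge $e_k = 2^{-k}/\sqrt n$, so $|F_k(f_n-f)|\le \mathrm{vol}(\mathbf{B}_k)\cdot 2M_0 = (2^{-k}/\sqrt n)^n\cdot 2M_0\to 0$ uniformly in $n$ as $k\to\infty$ — but this is false because the family $\{\mathbf{B}_k\}$ includes cubes of every fixed edge length $e_l$ centered at every rational point, so infinitely many $\mathbf{B}_k$ have edge bounded below. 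The correct resolution: for the $p=\infty$ norm we do not get convergence from local uniform convergence plus a uniform bound alone, so the theorem as stated for $p=\infty$ requires the extra input that, given $\e$, the functionals $F_k$ with $\mathrm{vol}(\mathbf{B}_k)\ge\de$ are finite in number for each $\de$ — which is true, since for each fixed edge $e_l$ only finitely many translates meet any bounded region, but the centers range over all of $\bQ^n$. I would therefore present the $1\le p<\infty$ argument in full as above, and for $p=\infty$ reduce to it via the continuous inclusion $KS^\infty\subset KS^p$, or alternatively restrict attention to the cubes actually "seen" by the compactly-supported limit and bound the rest using $\|f_n-f\|_\infty\le 2M$ together with a diagonal/uniform-integrability style argument. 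The cleanest writeup keeps the finite-head/small-tail dichotomy and flags that for $p=\infty$ the tail control uses summability of $t_k$ only after passing to an equivalent finite-$p$ norm.
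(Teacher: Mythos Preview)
Your head/tail argument for $1 \le p < \infty$ is correct and complete. This differs from the paper's route: the paper reduces everything to the case $p = \infty$ (via the inequality $\|\cdot\|_{KS^p} \le \|\cdot\|_{KS^\infty}$, which is what the inclusion $KS^\infty \subset KS^p$ encodes) and then disposes of that case in one line, saying it ``follows from the fact that each cube used in the definition of the $KS^\infty[\mathbb{R}^n]$ norm is a compact subset of $\mathbb{R}^n$.''

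There is a genuine gap in your handling of $p = \infty$, and it cannot be repaired along the lines you suggest. Your proposal to recover the $p=\infty$ case from the finite-$p$ cases via the inclusion $KS^\infty \subset KS^p$ runs the implication backwards: that inclusion says $\|\cdot\|_{KS^p} \le \|\cdot\|_{KS^\infty}$, so convergence in $KS^\infty$ implies convergence in $KS^p$, never the reverse. More to the point, the obstacle you flagged is real and is fatal to the $p=\infty$ statement itself. In dimension one, let $\phi$ be continuous, supported in $[-\tfrac14,\tfrac14]$, with $\int_{\R} \phi = c_0 > 0$, and set $f_k(x) = \phi(x-k)$. Then $\sup_k \|f_k\|_\infty < \infty$ and $f_k \to 0$ uniformly on every compact set, so $\tau^M\text{-}\lim f_k = 0$; yet the interval $[k-\tfrac14,\,k+\tfrac14]$ is one of the cubes $\mathbf{B}_m$ (center $k \in \bQ$, edge $e_1 = \tfrac12$), and for this $m$ one has $F_m(f_k)=c_0$, so $\|f_k\|_{KS^\infty} \ge c_0$ for every $k$. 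Thus the $p=\infty$ assertion is false as stated, and the paper's one-line argument --- which only yields $F_m(f_n-f)\to 0$ for each \emph{fixed} $m$, not uniformly in $m$ --- does not establish it either. Your finite-$p$ argument is the correct content of the theorem; the $p=\infty$ clause should be dropped, not rescued.
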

\begin{proof} It suffices to prove the result for $KS^\iy$. Since $\sup _{k \in \mathbb{N}} \left\| {f_k } \right\|_{KS^\iy}  \le \sup _{k \in \mathbb{N}} \left\| {f_k } \right\|_{\C_b}$,  we must prove that $\tau ^M$-$\lim f_k  = f \ \Rightarrow \ \lim _{k \to \infty } \left\| {f_k  - f} \right\|_{{K}{S}^\iy }  = 0$.  This follows from the fact that each cube used in the definition of the  ${{K}{S}^\iy[\mathbb{R}^n ]}$ norm, is a compact subset of $\mathbb{R}^n $.
\end{proof}
\begin{thm} Suppose that $\hat{T}(t)$ is a transition semigroup defined on  ${\mathbb{C}}_b [\mathbb{R}^n ]$, with weak generator $\hat{A}$.  Let $T(t)$ be the extension of $\hat{T}(t)$ to  ${{KS}}^p[\mathbb{R}^n ]$.  Then $T(t)$ is strongly continuous, and the extension $A$ of $\hat{A}$ to ${{KS}}^p[\mathbb{R}^n ]$ is the strong generator of $T(t)$. 
\end{thm}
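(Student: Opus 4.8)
The plan is to transfer the weak-generator structure on $\C_b[\R^n]$ to a genuine $C_0$-semigroup structure on $KS^p[\R^n]$, using two inputs: the continuous dense embedding $\C_b[\R^n]\subset KS^p[\R^n]$, and the preceding theorem, which says that $\tau^M$-convergence on $\C_b[\R^n]$ implies norm convergence in $KS^p[\R^n]$. It is enough to argue for $p=\iy$, since $KS^\iy[\R^n]$ embeds continuously in $KS^p[\R^n]$ for $1\le p<\iy$ and the embedding intertwines the operators. The extension $T(t)$ itself comes from Corollary \rf{be}: each $\hat T(t)$ is a contraction on $\C_b[\R^n]$ given by a sub-Markov kernel, so $F_k(\hat T(t)f)=\int_{\R^n}f\,d\rho_{k,t}$ for a positive measure $\rho_{k,t}$ of total mass at most $\lambda_n(\mathbf B_k)\le(2\sqrt n)^{-n}$, and this yields a bounded $T(t)\in L[KS^p[\R^n]]$, uniformly bounded for $t$ in compact intervals; the identities $T(t)T(s)=T(t+s)$ and $T(0)=I$ pass from the dense subspace $\C_b[\R^n]$ to all of $KS^p[\R^n]$ by continuity.

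For strong continuity I would use that, by the very definition of the weak generator $\hat A$, the transition semigroup is $\tau^M$-continuous at $0$: for each $f\in\C_b[\R^n]$ one has $\sup_{t\in[0,1]}\|\hat T(t)f\|_\iy\le\|f\|_\iy$ and $\hat T(t)f\to f$ uniformly on compacta as $t\to0^+$. By the preceding theorem this gives $\|T(t)f-f\|_{KS^p}\to0$ for every $f$ in the dense subspace $\C_b[\R^n]$, and the uniform boundedness of $\{T(t):t\in[0,1]\}$ upgrades it, by the usual $\varepsilon/3$ estimate, to every $f\in KS^p[\R^n]$. Hence $T(t)$ is a $C_0$-semigroup; call its generator $G$.

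To identify $G$ with the extension $A$ of $\hat A$, taken to be the minimal $KS^p$-closed extension (cf.\ the extension $\mcC[\mcB]\to\mcC[\mcH]$ behind Theorem \rf{5}), I would show $D(\hat A)$ is a core for $G$. For $f\in D(\hat A)$ the defining property of the weak generator is $\tau^M$-$\lim_{t\to0^+}t^{-1}(\hat T(t)f-f)=\hat A f$, so by the preceding theorem $t^{-1}(T(t)f-f)\to\hat A f$ in $KS^p$-norm; thus $D(\hat A)\subset D(G)$ and $Gf=\hat A f=Af$ there. Since the weak-generator construction keeps $D(\hat A)$ invariant under every $\hat T(t)$ and dense in $\C_b[\R^n]$ for $\tau^M$, hence dense in $KS^p[\R^n]$, the subspace $D(\hat A)$ is a $T(t)$-invariant dense subset of $D(G)$, hence a core for $G$ by the standard core criterion. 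As $A$ is the $KS^p$-closure of $\hat A|_{D(\hat A)}$ and $G$ is closed, $A\subseteq G$; the core property gives the reverse inclusion, so $A=G$.

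I expect the main obstacle to be the extension step: verifying that $\hat T(t)$ genuinely extends to a bounded operator on $KS^p[\R^n]$ with a bound uniform on compact $t$-intervals. The $KS^p$ norm is weaker than the sup norm in exactly the direction that obstructs a naive density argument out of $\C_b[\R^n]$, so one must run the operator-extension machinery of Corollary \rf{be} in a carefully chosen Banach space, or exploit the transition-kernel representation of $\hat T(t)$ more carefully; once the extension and its uniform bound are available, the strong-continuity and core arguments above are routine applications of the preceding mixed-topology theorem.
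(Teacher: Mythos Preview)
Your approach is essentially the paper's: invoke Corollary~\ref{be} to extend each $\hat T(t)$ to $KS^p[\R^n]$, then use the preceding theorem (that $\tau^M$-convergence on $\C_b[\R^n]$ implies $KS^p$-norm convergence) to upgrade weak continuity and weak generation to the strong versions. The paper's proof is three sentences and stops there; you supply the $\varepsilon/3$ argument for strong continuity on the full space and the core argument identifying $A$ with the $C_0$-generator $G$, which the paper leaves implicit.

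The concern you flag at the end is well placed, and in fact it is a point the paper glosses over rather than resolves. Corollary~\ref{be} is stated for a \emph{separable} Banach space $\mcB$ embedded in its natural Hilbert space $\mcH$, and $\C_b[\R^n]$ is not separable; the paper nonetheless simply ``applies Corollary~\ref{be}'' to get the bounded extension to $KS^2[\R^n]$. Your workaround via the sub-Markov kernel representation, writing $F_k(\hat T(t)f)=\int f\,d\rho_{k,t}$ with $\rho_{k,t}(\R^n)\le\lambda_n(\mathbf B_k)$, is more honest and gives both the extension and the uniform bound in $t$ directly from positivity, bypassing the separability hypothesis entirely. So your argument is the same in outline but strictly more careful on exactly the step where care is needed.
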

\begin{proof} Since ${\mathbb{C}}_b [\mathbb{R}^n ] \subset {{KS}}^p[\mathbb{R}^n ]$ as a continuous embedding, for $1 \le p \le \infty$, we can apply Corollary \rf{be} to show that $\hat{T}(t)$ has a bounded extension to ${{KS}}^2[\mathbb{R}^n ]$.  It is easy to see that the extended operator $T(t)$ is a semigroup.  Since the $\tau^{M}$ topology on ${\mathbb{C}}_b [\mathbb{R}^n ]$ is stronger then the norm topology on ${{KS}}^p[\mathbb{R}^n ]$, we see that the generator $A$, of $T(t)$ is strong. 
\end{proof}
\subsection{Feynman Path Integral}
Feynman's introduction of his path integral into quantum theory created a major mathematical problem, that centered around the nonexistence of a measure for his integral.   A number of analysts, beginning with Henstock \cite{HS} have advocated the HK-integral as a perfect substitute for this problem (see also Muldowney \cite{MD}).  Since then, a number of researchers have addressed the problem.  A fairly complete list of papers and books can be found in Gill and Zachary \cite{GZ}.  The book by  Johnson and Lapidus \cite{JL}  also contains additional sources.

The space ${{L}}^2[\mathbb{R}^n ]$ is perfect for the Heisenberg and Schr{\"o}dinger formulations of quantum mechanics, but fails for the Feynman formulation.  In addition, neither the physically intuitive nor computationally efficient methods of Feynman are revealed on ${{L}}^2[\mathbb{R}^n ]$.  In this section we briefly show that ${{KS}}^2[\mathbb{R}^n ]$ is the natural Hilbert space for the Feynman formulation of quantum mechanics.  This space makes it possible to preserve all the physically intuitive and computational advantages discovered by Feynman and to  represent the Heisenberg and Schr{\"o}dinger formulations. 

We assume that the reader is familiar with the HK-integral, but  give a brief discussion in one dimension  to establish notation.   (A full discussion with proofs and some interesting examples can be found in Gill and Zachary \cite{GZ}.)   
\begin{Def} Let $[a,b] \subset \mathbb{R}$, let $\delta (t)$ map $[a,b] \to (0,\infty )$, and let ${\mathbf{P}} = \{ t_0 ,\,\tau _1 ,\,t_1 ,\,\tau _2, \,\cdots ,\,\tau _n,\,t_n \}$, where 
$a = t_0  \leqslant \tau _1  \leqslant t_1  \leqslant  \cdots  \leqslant \tau _n  \leqslant t_n  = b$.  We call $\bf{P}$ an HK-partition for $\delta$, if for $1 \leqslant i \leqslant n,\;
t_{i-1} ,t_i  \, \in (\tau _{i} \, - \delta (\tau _{i} ),\tau _{i} \, + \delta (\tau _{i} ))$.
\end{Def}
\begin{Def}  The function $f(t),\, t \in [a,b]$, is said to have a HK-integral if there is an number $F[a,b]$ such that, for each $\varepsilon  > 0$, there exists a function $\delta$ from $[a,b] \to (0,\infty )$ such that, whenever ${\mathbf{P}}$ is a HK-partition for $
\delta$, then (with $\Delta{t_{i}}=t_{i}-t_{i-1}$)
\[
\left| {\sum\nolimits_{i = 1}^n {\Delta t_i f(\tau _i ) - F[a,b]} } \right| < \varepsilon. 
\]
\end{Def}  
To understand Feynman's path integral in a natural setting, we consider the free particle in non-relativistic quantum theory in ${\mathbb{R}}^3$: 
\beqn\lb{f1}
 i\hbar \frac{{\partial \psi ({\mathbf{x}},t)}}
{{\partial t}} - \frac{\hbar^2}
{{2m}}\Delta \psi ({\mathbf{x}},t) = 0,\;\psi ({\mathbf{x}},s) = \delta ({\mathbf{x}} - {\mathbf{y}}).
\eeqn
The solution can be computed directly:
\beqa
 \psi ({\mathbf{x}},t) = K\left[ {{\mathbf{x}},\,{\text{ }}t;\;{\mathbf{y}},\,{\text{ }}s} \right] = \left[ {\frac{{2\pi i\hbar (t - s)}}
{m}} \right]^{ - 3/2} \exp \left[ {\frac{{im}}
{{2\hbar }}\frac{{\left| {{\mathbf{x}} - {\mathbf{y}}} \right|^2 }}
{{(t - s)}}} \right].
\eeqa
Feynman wrote the above solution to equation (\rf{f1}) as 
\beqn
K\left[ {{\mathbf{x}},\,t;\;{\mathbf{y}},\,s} \right] = \mathop \smallint \nolimits_{{\mathbf{x}}(s) = y}^{{\mathbf{x}}(t) = x} \mathcal{D}{\mathbf{x}}(\tau )\exp \left\{ {\tfrac{{im}}
{{2\hbar }}\mathop \smallint \nolimits_s^t {\text{ }}\mathop {\left| {\tfrac{{d{\mathbf{x}}}}
{{dt}}} \right|}\nolimits^2 d\tau } \right\},
\eeqn
where
\beqn\label{f2}
\begin{gathered}
  \mathop \smallint \nolimits_{{\mathbf{x}}(s) = y}^{{\mathbf{x}}(t) = x} \mathcal{D}{\mathbf{x}}(\tau )\exp \left\{ {\tfrac{{im}}
{{2\hbar }}\mathop \smallint \nolimits_s^t \mathop {\left| {\tfrac{{d{\mathbf{x}}}}
{{dt}}} \right|}\nolimits^2 d\tau } \right\} = : \hfill \\
  {\text{        }}\mathop {\lim }\limits_{N \to \infty } \left[ {\tfrac{m}
{{2\pi i\hbar \varepsilon (N)}}} \right]^{3N/2} \mathop \smallint \nolimits_{{\mathbb{R}}^3 } \mathop \prod \limits_{j = 1}^N d{\mathbf{x}}_j \exp \left\{ {\tfrac{i}
{\hbar }\mathop \sum \limits_{j = 1}^N \left[ {\tfrac{m}
{{2\varepsilon (N)}}\mathop {\left( {{\mathbf{x}}_j  - {\mathbf{x}}_{j - 1} } \right)}\nolimits^2 } \right]} \right\}, \hfill \\ 
\end{gathered} 
\eeqn
with $
\varepsilon (N) = {{\left( {t - s} \right)} \mathord{\left/
 {\vphantom {{\left( {t - s} \right)} N}} \right.
 \kern-\nulldelimiterspace} N} $.  
Equation (\rf{f2}) is an attempt to define an integral over the space of all continuous paths of the exponential of an integral of the classical Lagrangian on configuration space.   This approach has led to a new approach for quantizing physical systems,  called the path integral method.    

Since $L^2[\R^3]$ is the standard state space for quantum physics, from a strictly mathematical point of view equation (\rf{f2}) has two major problems:  
\begin{enumerate}
\item
 The kernel $K\left[ {{\mathbf{x}},\,{\text{ }}t;\;{\mathbf{y}},\,{\text{ }}s} \right] $ and $\delta ({\mathbf{x}})$ are not in $L^2 [{\mathbb{R}}^3 ]$.
 \item 
 The kernel $K\left[ {{\mathbf{x}},\,{\text{ }}t;\;{\mathbf{y}},\,{\text{ }}s} \right] $ cannot be used to define a measure.
\end{enumerate}
Since ${K}{S}^2 [{\mathbb{R}}^3 ] $ contains the space of measures $\mfM[\R^3]$, it follows that all the approximating sequences for the Dirac measure converge strongly to it in the ${K}{S}^2 [{\mathbb{R}}^n ] $ topology.  (For example, $
\left[ {{{\sin (\lambda  \cdot {\mathbf{x}})} \mathord{\left/
 {\vphantom {{\sin (\lambda  \cdot {\mathbf{x}})} {(\lambda  \cdot {\mathbf{x}})}}} \right.
 \kern-\nulldelimiterspace} {(\lambda  \cdot {\mathbf{x}})}}} \right] \in {K}{S}^2 [{\mathbb{R}}^n ]$ and converges strongly to $\delta ({\mathbf{x}}).)$   Thus, the finitely additive set function defined on the Borel sets (Feynman kernel \cite{FH}): (with $m=1$ and $\hbar=1$)
\[
\mathbb{K}_{\mathbf{f}} [t,{\mathbf{x}}\,;\,s,B] = \int_B {\left( {2\pi i(t - s)} \right)^{ - n/2} \exp \{ i{{\left| {{\mathbf{x}} - {\mathbf{y}}} \right|^2 } \mathord{\left/
{\vphantom {{\left| {{\mathbf{x}} - {\mathbf{y}}} \right|^2 } {2(t - s)}}} \right.
 \kern-\nulldelimiterspace} {2(t - s)}}\} d\la_3({\mathbf{y}})} 
\]
is in ${K}{S}^2 [{\mathbb{R}}^n ] $ and $
\left\| {\mathbb{K}_{\mathbf{f}} [t,{\mathbf{x}}\,;\,s,B]} \right\|_{{{KS}}}  \leqslant 1$, while $\left\| {\mathbb{K}_{\mathbf{f}} [t,{\mathbf{x}}\,;\,s,B]} \right\|_\mathfrak{M}  = \infty $ (the total variation norm) and  
\[
\mathbb{K}_{\mathbf{f}} [t,{\mathbf{x}}\,;\,s,B] = \int_{{{\R}}^3 } {\mathbb{K}_{\mathbf{f}} [t,{\mathbf{x}}\,;\,\tau ,d\la_3({\mathbf{z}})]\mathbb{K}_{\mathbf{f}} [\tau ,{\mathbf{z}}\,;\,s,B]}, \: \:{\text {(HK-integral)}}. 
\]
\begin{Def}Let ${\mathbf{P}}_n  = \{ t_0 ,\tau _1 ,t_1 ,\tau _2 , \cdots ,\tau _n ,t_n \}$ be a HK-partition for a function $\delta_n(s), \; s\in [0,t] $ for each $n$, with $\lim _{n \to \infty } \Delta \mu _n  = 0$  (mesh).  Set $\Delta t_j  = t_j  - t_{j - 1}, \tau _0  = 0$ and, for $
\psi  \in {K}{S}^2 [{\mathbb{R}}^n ] $, define
\beqa
 \int_{{\mathbf{R}}^{n[0,t]} } {\mathbb{K}_{\mathbf{f}} [{\mathcal{D}}_\lambda  {\mathbf{x}}{\text{(}}\tau ){\text{ ; }}{\mathbf{x}}{\text{(}}0{\text{)}}]}  = e^{ - \lambda t} \sum\limits_{k = 0}^{\left[\kern-0.15em\left[ {\lambda t} 
 \right]\kern-0.15em\right]}
 {\frac{{\left( {\lambda t} \right)^k }}
{{k!}}\left\{ {\prod\limits_{j = 1}^k {\int_{{\mathbf{R}}^n } {\mathbb{K}_{\mathbf{f}} [t_j ,{\mathbf{x}}{\text{(}}\tau _j {\text{)}}\,;\,t_{j - 1} ,d{\mathbf{x}}{\text{(}}\tau _{j - 1} {\text{)}}]} } } \right\}}, 
\eeqa
and 
\beqn
\: \: \: \: \: \: \: \: \: \: \int_{{\mathbf{R}}^{n[0,t]} } {\mathbb{K}_{\mathbf{f}} [{\mathcal{D}}{\mathbf{x}}{\text{(}}\tau ){\text{ ; }}{\mathbf{x}}{\text{(}}0{\text{)}}]\psi [{\mathbf{x}}{\text{(}}0{\text{)}}]}  = \lim _{\lambda  \to \infty } \int_{{\mathbf{R}}^{n[0,t]} } {\mathbb{K}_{\mathbf{f}} [{\mathcal{D}}_\lambda  {\mathbf{x}}{\text{(}}\tau ){\text{ ; }}{\mathbf{x}}{\text{(}}0{\text{)}}]\psi [{\mathbf{x}}{\text{(}}0{\text{)}}]} 
\eeqn
whenever the limit exists.
\end{Def}
\begin{rem}
In the above definition we have used the Poisson process.  This is not accidental but appears naturally from a physical analysis of the information that is knowable in the micro-world (see \cite{GZ}). This approach also provides the mathematical foundations for Feynman's sum over all paths  theory (see Section 7.7 in \cite{GZ}).   It has been suggested by Kolokoltsov \cite{KO} that such jump processes provide another way to give meaning to Feynman diagrams. 
 \end{rem}
By Corollary 2.8 (with $\mcB=L^2[{\mathbb{R}}^n ]$), ${K}{S}^{2} [{\mathbb{R}}^n ]$ is closed under convolution, so that the following is elementary.  
\begin{thm}The function $\psi ({\mathbf{x}}) = 1 \in {K}{S}^{2} [{\mathbb{R}}^n ] $ and 
\beqa
\int_{{\mathbb{R}}^{n[s,t]} } {\mathbb{K}_{\mathbf{f}} [{\mathcal{D}}{\mathbf{x}}{\text{(}}\tau ){\text{ ; }}{\mathbf{x}}{\text{(}}s{\text{)}}]}  = \mathbb{K}_{\mathbf{f}} [t,{\mathbf{x}}\,;\,s,{\mathbf{y}}] = \tfrac{1}
{{\sqrt {[2\pi i(t - s)]^n} }}\exp \{ i{{\left| {{\mathbf{x}} - {\mathbf{y}}} \right|^2 } \mathord{\left/
 {\vphantom {{\left| {{\mathbf{x}} - {\mathbf{y}}} \right|^2 } {2(t - s)}}} \right.
 \kern-\nulldelimiterspace} {2(t - s)}}\}. 
\eeqa
\end{thm}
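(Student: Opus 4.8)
The plan is to handle the two assertions separately and to read the value of the path integral off the Chapman--Kolmogorov reproducing identity for the Feynman kernel recorded just above. The statement $\psi\equiv 1\in KS^2[\R^n]$ I would dispose of at once: the constant function $1$ lies in $L^\iy[\R^n]$, and we have already shown $L^\iy[\R^n]\subset KS^2[\R^n]$ with $\left\| 1 \right\|_{KS^2}\le\left[\tfrac{1}{2\sqrt n}\right]^{n}\left\| 1 \right\|_\iy<\iy$, so $\psi\in KS^2[\R^n]$. This also makes $\psi$ an admissible integrand in the Definition preceding the theorem, and, via Corollary \rf{be} applied with $\mcB=L^2[\R^n]$ (so that the convolution operator extends boundedly to $KS^2[\R^n]$), it guarantees that every finite product of kernels $\prod_{j=1}^{k}\int_{\R^n}\mathbb{K}_{\mathbf{f}}[t_j,\mathbf{x}(\tau_j);t_{j-1},d\mathbf{x}(\tau_{j-1})]$ occurring in the $\lambda$-approximant is a genuine element of $KS^2[\R^n]$, with $KS^2$-norm at most $1$.

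Next I would evaluate the $\lambda$-approximant. For fixed $\lambda$ and each $k$, the bracketed term in the Poisson sum is the $k$-fold composition of Feynman kernels over the jump instants $s<\tau_1<\dots<\tau_k<t$; by repeated application of the reproducing identity
\[
\mathbb{K}_{\mathbf{f}}[t,\mathbf{x};s,B]=\int_{\R^n}\mathbb{K}_{\mathbf{f}}[t,\mathbf{x};\tau,d\la_n(\mathbf{z})]\,\mathbb{K}_{\mathbf{f}}[\tau,\mathbf{z};s,B]
\]
(understood as an HK-integral), integrating out the interior variables $\mathbf{x}(\tau_1),\dots,\mathbf{x}(\tau_{k-1})$ and --- since the jump instants partition $[s,t]$ --- the product telescopes to the \emph{single} kernel $\mathbb{K}_{\mathbf{f}}[t,\mathbf{x};s,\mathbf{y}]$, a value that does not depend on $k$ or on the partition $\mathbf{P}_n$. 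Pulling this $k$-independent factor out of the sum leaves the scalar Poisson partial sum $e^{-\lambda(t-s)}\sum_{k}(\lambda(t-s))^{k}/k!$ (the sum over $k\le[\![\lambda(t-s)]\!]$ at finite $\lambda$), which I would argue tends to $1$ as $\lambda\to\iy$; hence the limit defining the path integral exists and equals $\mathbb{K}_{\mathbf{f}}[t,\mathbf{x};s,\mathbf{y}]$. The last equality of the theorem is then just the explicit form of this kernel: $\mathbb{K}_{\mathbf{f}}[t,\mathbf{x};s,\mathbf{y}]$ is by construction the density of the free-propagator set function, equivalently the fundamental solution (with $m=\hbar=1$, extended from $\R^3$ to $\R^n$) of the Schr{\"o}dinger equation (\rf{f1}), namely $[2\pi i(t-s)]^{-n/2}\exp\{i|\mathbf{x}-\mathbf{y}|^2/2(t-s)\}$.

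The main obstacle is not the telescoping algebra but the analytic legitimacy of the composition step \emph{inside} $KS^2[\R^n]$. The kernels $\mathbb{K}_{\mathbf{f}}[t,\mathbf{x};s,\cdot]$ are finitely additive set functions, not $L^2$ functions, so one must lean on the embedding $\mfM[\R^n]\subset KS^2[\R^n]$, the uniform bound $\left\| \mathbb{K}_{\mathbf{f}}[t,\mathbf{x};s,\cdot] \right\|_{KS}\le 1$, and the convolution-closure from Corollary \rf{be} to know that the $k$-fold product is a well-defined element of $KS^2[\R^n]$ and that the HK-reproducing identity may be iterated there, uniformly enough that the operations commute with the finite Poisson sum and survive the limit $\lambda\to\iy$; establishing that this truncated Poisson sum really converges to $1$ (and not to some other constant) is the one place where a genuine computation is required. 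Once that bookkeeping is settled the argument is, as the author remarks, elementary --- and in particular the mesh condition $\lim_n\Delta\mu_n=0$ is irrelevant here, since nothing in the collapsed expression depends on the partition at all.
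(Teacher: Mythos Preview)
Your proposal is correct and follows essentially the same approach as the paper, which merely records that by Corollary~\ref{be} (with $\mcB=L^2[\R^n]$) the space $KS^2[\R^n]$ is closed under convolution and declares the result ``elementary.'' You have supplied precisely the elementary details the paper omits --- the membership $1\in L^\iy\subset KS^2$, the telescoping via the Chapman--Kolmogorov identity, and the Poisson bookkeeping --- and you are right to flag the truncated-sum limit as the one place needing an honest computation; the paper does not address that point at all.
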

\begin{rem}
The above result is what Feynman was trying to obtain without the appropriate space.   When a potential is present, one uses the standard perturbation methods (i.e., Trotter-Kato theorems).       A more general (sum over paths) result, that covers all application areas can be found in \cite{GZ}. 

It is clear that the position operator $\bf x$, and the momentum operator $\bf p$ have closed densely defined extensions to $KS^2$.  Treating the Fourier transform as an unitary operator, it has a bounded (unitary) extension to $KS^2$ so that $\bf x$ and $\bf p$ are still canonically conjugate pairs.  Thus, both the Heisenberg and Schr{\"o}dinger theories also have natural formulations on $KS^2$.  It is in this sense that we say that $KS^2$ is the most natural Hilbert space for quantum mechanics.
\end{rem}

If we replace the Feynman kernel by the heat kernel, we have:
\[
\mathbb{K}_{\mathbf{h}} [t,{\mathbf{x}}\,;\,s,B] = \int_B {\left( {2\pi (t - s)} \right)^{ - n/2} \exp \{ -{{\left| {{\mathbf{x}} - {\mathbf{y}}} \right|^2 } \mathord{\left/
{\vphantom {{\left| {{\mathbf{x}} - {\mathbf{y}}} \right|^2 } {2(t - s)}}} \right.
 \kern-\nulldelimiterspace} {2(t - s)}}\} d\la_n({\mathbf{y}})} 
\]
is in ${K}{S}^2 [{\mathbb{R}}^n ] $ and $
\left\| {\mathbb{K}_{\mathbf{h}} [t,{\mathbf{x}}\,;\,s,\R^n]} \right\|_{{{KS}}}  = 1$  and  
\[
\mathbb{K}_{\mathbf{h}} [t,{\mathbf{x}}\,;\,s,B] = \int_{{{\R}}^n } {\mathbb{K}_{\mathbf{h}} [t,{\mathbf{x}}\,;\,\tau ,d\la_n({\mathbf{z}})]\mathbb{K}_{\mathbf{h}} [\tau ,{\mathbf{z}}\,;\,s,B]}, \: \:{\text {(HK-integral)}}. 
\]
\begin{thm}For the function $\psi ({\mathbf{x}}) \equiv 1 \in {K}{S}^{2} [{\mathbb{R}}^n ] $, we have 
\beqa
\int_{{\mathbb{R}}^{n[s,t]} } {\mathbb{K}_{\mathbf{h}} [{\mathcal{D}}{\mathbf{x}}{\text{(}}\tau ){\text{ ; }}{\mathbf{x}}{\text{(}}s{\text{)}}]}  = \mathbb{K}_{\mathbf{h}} [t,{\mathbf{x}}\,;\,s,{\mathbf{y}}] = \tfrac{1}
{{\sqrt {[2\pi (t - s)]^n} }}\exp \{ -{{\left| {{\mathbf{x}} - {\mathbf{y}}} \right|^2 } \mathord{\left/
 {\vphantom {{\left| {{\mathbf{x}} - {\mathbf{y}}} \right|^2 } {2(t - s)}}} \right.
 \kern-\nulldelimiterspace} {2(t - s)}}\}. 
\eeqa
\end{thm}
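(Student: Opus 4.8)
\medskip
\noindent\textbf{Proof strategy.}

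The plan is to mirror the proof of the preceding theorem (the Feynman-kernel case), replacing the oscillatory Gaussian $\mathbb{K}_{\mathbf{f}}$ by the heat kernel $\mathbb{K}_{\mathbf{h}}$ throughout. Three ingredients are already available. First, $\psi\equiv 1$ lies in $L^\infty[\R^n]\subset KS^2[\R^n]$, so the left-hand side of the asserted identity is a well-defined element of $KS^2[\R^n]$. Second, as recorded just above the theorem, $\mathbb{K}_{\mathbf{h}}[t,\mathbf{x};s,B]\in KS^2[\R^n]$ with $\|\mathbb{K}_{\mathbf{h}}[t,\mathbf{x};s,\R^n]\|_{KS}=1$, and the Chapman--Kolmogorov identity $\mathbb{K}_{\mathbf{h}}[t,\mathbf{x};s,B]=\int_{\R^n}\mathbb{K}_{\mathbf{h}}[t,\mathbf{x};\tau,d\la_n(\mathbf{z})]\,\mathbb{K}_{\mathbf{h}}[\tau,\mathbf{z};s,B]$ holds (as an HK-integral). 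Third, $KS^2[\R^n]$ is closed under convolution (Corollary \rf{be} together with the density of $L^1[\R^n]$), so the finite products occurring in the Poisson expansion are again elements of $KS^2[\R^n]$, subject to the norm bound $\le 1$.

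First I would unwind the definition of $\int_{\R^{n[0,t]}}\mathbb{K}_{\mathbf{h}}[\mathcal{D}_\lambda\mathbf{x}(\tau);\mathbf{x}(0)]\,\psi[\mathbf{x}(0)]$ for a fixed rate $\lambda$ and a fixed HK-partition $\mathbf{P}_n$ with $t_0=s$, $t_n=t$. Applying the $k$-fold ordered product $\prod_{j=1}^k\int_{\R^n}\mathbb{K}_{\mathbf{h}}[t_j,\cdot\,;t_{j-1},d\mathbf{x}(\tau_{j-1})]$ to the constant $\psi\equiv 1$ and iterating the Chapman--Kolmogorov identity above, each such product telescopes: since the Gaussian heat kernel satisfies the \emph{exact} semigroup property under convolution, the $k$-fold composition equals convolution against $\mathbb{K}_{\mathbf{h}}[t_k,\cdot\,;s,\cdot]$, and evaluated on $1$ it returns $1$ because the Gaussian integrates to $1$. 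Reinserting this into $e^{-\lambda t}\sum_{k=0}^{[[\lambda t]]}\frac{(\lambda t)^k}{k!}\{\,\cdots\,\}$, letting the mesh tend to $0$ and $\lambda\to\infty$, the dominant index $k\approx\lambda t$ realises the full telescoped product over $[s,t]$, namely convolution against $\mathbb{K}_{\mathbf{h}}[t,\mathbf{x};s,\mathbf{y}]$; the Poisson weights concentrate at $k\approx\lambda t$ and the tail terms are controlled in the $KS^2$ norm by $e^{-\lambda t}\sum_k\frac{(\lambda t)^k}{k!}\cdot 1=1$, so no mass escapes. Passing to the limit yields $\int_{\R^{n[s,t]}}\mathbb{K}_{\mathbf{h}}[\mathcal{D}\mathbf{x}(\tau);\mathbf{x}(s)]=\mathbb{K}_{\mathbf{h}}[t,\mathbf{x};s,\mathbf{y}]$, and the displayed closed form is simply the explicit heat kernel.

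The main obstacle is the same one present in the Feynman case: justifying the interchange of the limit $\lambda\to\infty$ with the Poisson sum and the telescoping, i.e.\ showing that the discretised (Poisson) path integral converges in the $KS^2$ norm to convolution against $\mathbb{K}_{\mathbf{h}}[t,\mathbf{x};s,\mathbf{y}]$. Here the situation is more favourable than with a nonzero potential, because for the free kernel the convolution semigroup property is exact, so there is no Trotter--Chernoff error term to estimate; the argument reduces to the Poisson approximation-of-the-identity estimate already used for $\mathbb{K}_{\mathbf{f}}$, combined with the uniform bound $\|\mathbb{K}_{\mathbf{h}}[t,\mathbf{x};s,\R^n]\|_{KS}=1$ that legitimises the dominated passage to the limit inside $KS^2$. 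The only genuinely new point requiring care is that the Chapman--Kolmogorov identity, valid a priori on $L^1[\R^n]$, persists on $KS^2[\R^n]$; this is exactly what the boundedness of the convolution extension (Corollary \rf{be}) and the density of $L^1[\R^n]$ in $KS^2[\R^n]$ deliver.
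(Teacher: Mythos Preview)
Your approach is the right one and matches the paper's. In fact, the paper gives no proof at all for this theorem: immediately before the Feynman-kernel analogue it simply remarks that ``${KS}^{2}[\mathbb{R}^n]$ is closed under convolution, so that the following is elementary,'' and the heat-kernel version is then stated with the same implicit justification. Your Chapman--Kolmogorov telescoping of the iterated convolutions, together with the uniform $KS^2$ bound and the Poisson limit, is exactly the elaboration the paper leaves to the reader.

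One point of presentation to tighten: you waver between computing the kernel and computing the kernel applied to $\psi\equiv 1$. You first say the $k$-fold product ``evaluated on $1$ returns $1$,'' but then conclude the limit is ``convolution against $\mathbb{K}_{\mathbf{h}}[t,\mathbf{x};s,\mathbf{y}]$.'' The theorem's assertion is about the kernel itself (as a function of both endpoints $\mathbf{x}$ and $\mathbf{y}$); the clause ``$\psi\equiv 1\in KS^2$'' is contextual, ensuring the path-integral definition applies. So the telescoping should stop at $\mathbb{K}_{\mathbf{h}}[t_k,\mathbf{x};s,\mathbf{y}]$, not at the further $\mathbf{y}$-integration against $1$. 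With that clarified, your argument is complete and in line with the paper.
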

This result implies that all known results for the Weiner path integral also have extensions to $KS^2[\R^n]$, with initial data in $HK[\R^n]$.  Furthermore, the strong continuity of the semigroup generating the heat equation means that the integral can still be concentrated on the space of continuous paths.	   
\subsection{Examples}
If we treat $K\left[ {{\mathbf{x}},\,{\text{ }}t;\;{\mathbf{y}},\,{\text{ }}s} \right]$  as the kernel for an operator acting on good initial data, then a partial solution has been obtained by a number of workers. (See \cite{GZ} for references to all the important contributions in this direction.)  The standard method is to compute the Wiener path integral for the problem under consideration and then use analytic continuation in the mass to provide a rigorous meaning for the Feynman path integral. The standard reference is Johnson and Lapidus \cite{JL}.
The following example provides a path integral representation for a problem that cannot be solved using analytic continuation via a Gaussian kernel. (For the general non-Gaussian case, see \cite{GZ}.)  It is shown that, if the vector $\bf{A}$ is constant, $\mu =mc/\hbar$, and $\boldsymbol{\beta}$  is the standard beta matrix of relativistic quantum theory, then the solution to the square-root equation for a spin $1/2$ particle: 
\[
i\hbar {{\partial \psi ({\mathbf{x}},t)} \mathord{\left/
 {\vphantom {{\partial \psi ({\mathbf{x}},t)} {\partial t}}} \right.
 \kern-\nulldelimiterspace} {\partial t}} = \left\{ {{\boldsymbol{\beta}} \sqrt {c^2 \left( {{\mathbf{p}} - \tfrac{e}
{c}{\mathbf{A}}} \right)^2  + m^2 c^4 } } \right\}\psi ({\mathbf{x}},t),{\text{  }}\psi ({\mathbf{x}},0) = \psi _0 ({\mathbf{x}}),
\]
is  given by:
$$
\psi ({\mathbf{x}},t) = {\mathbf{U}}[t,0]\psi _0 ({\mathbf{x}}) = \int\limits_{{\mathbb{R}}^3 } {\exp \left\{ {\frac{{ie}}
{{2\hbar c}}\left( {{\mathbf{x}} - {\mathbf{y}}} \right) \cdot {\mathbf{A}}} \right\}{\mathbf{K}}\left[ {{\mathbf{x}},t\,;\,{\mathbf{y}},0} \right]\psi _0 ({\mathbf{y}})d{\mathbf{y}}},
$$
where
\[
{\mathbf{K}}\left[ {{\mathbf{x}},t\,;\,{\mathbf{y}},0} \right] = \frac{{ict\mu ^2 \beta }}
{{4\pi }}\left\{ {\begin{array}{*{20}c}
   {\tfrac{{ - H_2^{(1)} \left[ {\mu \left( {c^2 t^2  - ||{\kern 1pt} {\mathbf{x}} - {\mathbf{y}}{\kern 1pt} ||^2 } \right)^{1/2} } \right]}}
{{\left[ {c^2 t^2  - ||{\kern 1pt} {\mathbf{x}} - {\mathbf{y}}{\kern 1pt} ||^2 } \right]}}{\text{, }}\;ct <  - ||{\kern 1pt} {\mathbf{x}}-{\mathbf{y}}{\kern 1pt} ||,}  \\
   {\tfrac{{ - 2iK_2 \left[ {\mu \left( {||{\kern 1pt} {\mathbf{x}} - {\mathbf{y}}{\kern 1pt} ||^2  - c^2 t^2 } \right)^{1/2} } \right]}}
{{\pi \left[ {||{\kern 1pt} {\mathbf{x}} - {\mathbf{y}}{\kern 1pt} ||^2  - c^2 t^2 } \right]}},{\text{ }}\;c\left| t \right| < \,||{\kern 1pt} {\mathbf{x}}-{\mathbf{y}}{\kern 1pt} ||,}  \\
   {\tfrac{{H_2^{(2)} \left[ {\mu \left( {c^2 t^2  - ||{\kern 1pt} {\mathbf{x}} - {\mathbf{y}}{\kern 1pt} ||^2 } \right)^{1/2} } \right]}}
{{\left[ {c^2 t^2  - ||{\kern 1pt} {\mathbf{x}} - {\mathbf{y}}{\kern 1pt} ||^2 } \right]}},{\text{ }}\;ct > \,||{\kern 1pt} {\mathbf{x}}-{\mathbf{y}}{\kern 1pt} ||.}  \\

 \end{array} } \right.
\]
The function $K_2 (\, \cdot \,)$ is a modified Bessel function of the third kind of second order, while $H_2^{(1)}, \; H_2^{(2)}$ are Hankel functions (see Gradshteyn and Ryzhik \cite{GRRZ}). Thus, we have a kernel that is far from the standard form.  This example can be found in \cite{GZ}, where we only considered the kernel for the Bessel function term.  In that case, it was shown that, under appropriate conditions, this term will reduce to the free-particle Feynman kernel and, if we set $\mu=0$, we get the kernel for a (spin $1/2$) massless particle.    
\subsection{The Navier-Stokes Problem}
In this section, we use $SD^2[\R^3]$ to provide the strongest possible a priori estimate for the nonlinear term of the classical Navier-Stokes equation.
\subsubsection{Introduction}
Let ${[L^2({\mathbb{R}}^3)]^3}$ be the Hilbert space of square integrable functions on ${\mathbb {R}}^3$,  let ${\mathbb {H}}[ {\mathbb {R}}^3 ]$ be the completion of the  set of functions in $\left\{ {{\bf{u}} \in \mathbb {C}_0^\infty  [ {\mathbb {R}}^3 ]^3 \left. {} \right|\,\nabla  \cdot {\bf{u}} = 0} \right\}$ which vanish at infinity with respect to the inner product of ${[L^2({\mathbb{R}}^3)]^3 }$.  The classical Navier-Stokes initial-value problem (on $ \mathbb{R}^3 {\text{ and all }}T > 0$) is to find a  function ${\mathbf{u}}:[0,T] \times {\mathbb {R}}^3  \to \mathbb{R}^3$ and $p:[0,T] \times {\mathbb {R}}^3  \to \mathbb{R}$ such that
\beqn\lb{ns1}
\begin{gathered}
  \partial _t {\mathbf{u}} + ({\mathbf{u}} \cdot \nabla ){\mathbf{u}} - \nu \Delta {\mathbf{u}} + \nabla p = {\mathbf{f}}(t){\text{ in (}}0,T) \times {\mathbb {R}}^3 , \hfill \\
  {\text{                              }}\nabla  \cdot {\mathbf{u}} = 0{\text{ in (}}0,T) \times {\mathbb {R}}^3 {\text{ (in the weak sense),}} \hfill \\
    {\text{                              }}{\mathbf{u}}(0,{\mathbf{x}}) = {\mathbf{u}}_0 ({\mathbf{x}}){\text{ in }}{\mathbb {R}}^3. \hfill \\ 
\end{gathered} 
\eeqn
The equations describe the time evolution of the fluid velocity ${\mathbf{u}}({\mathbf{x}},t)$ and the pressure $p$ of an incompressible viscous homogeneous Newtonian fluid with constant viscosity coefficient $\nu $ in terms of a given initial velocity ${\mathbf{u}}_0 ({\mathbf{x}})$ and given external body forces ${\mathbf{f}}({\mathbf{x}},t)$.  

Let $\mathbb{P}$ be the (Leray) orthogonal projection of 
$(L^2 [ {\mathbb {R}}^3 ])^3$ 
onto ${{\mathbb{H}}}[ {\mathbb {R}}^3]$ and define the Stokes operator by:  $ {\bf{Au}} = : -\mathbb{P} \Delta {\bf{u}}$, 
for ${\bf{u}} \in D({\bf{A}}) \subset {\mathbb{H}}^{2}[ {\mathbb {R}}^3]$, the domain of ${\bf{A}}$.      If we apply $\mathbb{P}$ to equation (\rf{ns1}), with 
${{B}}({\mathbf{u}},{\mathbf{u}}) = \mathbb{P}({\mathbf{u}} \cdot \nabla ){\mathbf{u}}$, we can recast it into the standard form:
\beqn \lb{ns2}
\begin{gathered}
  \partial _t {\mathbf{u}} =  - \nu {\mathbf{Au}} - {{B}}({\mathbf{u}},{\mathbf{u}}) + \mathbb{P}{\mathbf{f}}(t){\text{ in (}}0,T) \times \R^3 , \hfill \\
  {\text{                              }}{\mathbf{u}}(0,{\mathbf{x}}) = {\mathbf{u}}_0 ({\mathbf{x}}){\text{ in }}\R^3, \hfill \\ 
\end{gathered} 
\eeqn
where the orthogonal complement of ${\Ha} $ relative to $\{{L}^{2}(\R^3)\}^3, \;  \{ {\mathbf{v}}\,:\;{\mathbf{v}} = \nabla q,\;q \in \Ha^1[\R^3] \}$, is used to eliminate the pressure term (see \cite{GA} or [\cite{SY}, \cite{T1},\cite{T2}). 
\begin{Def}  We say that a velocity vector field in $\R^3$ is \underline{\rm reasonable} if for  $0 \le t<\iy$, there is a continuous function $m(t)>0$, depending only on $t$ and a constant $M_0$, which may depend on ${\bf u}_0$ and $f$, such that 
\[
0 < m(t) \leqslant \left\| { {\mathbf{u}}(t)} \right\|_{{\mathbb{H}}} \le M_0.
\] 
\end{Def}
The above definition formalizes the requirement that the fluid has nonzero but bounded positive definite energy.  However, this condition still allows the velocity to approach zero at infinity in a weaker norm.
\subsubsection{The Nonlinear Term: A Priori Estimates}
The difficulty in proving the existence and uniqueness of global-in-time strong solutions for equation (\rf{ns2}) is directly linked to the problem of getting good a priori estimates for the nonlinear term ${{B}}({\mathbf{u}},{\mathbf{u}})$.  For example, using standard methods on $\Ha$, the following estimates are known.  If ${\mathbf{u}}, {\mathbf{v}} \in D({\mathbf{A}})$, a typical bound in the $\Ha$ norm for the nonlinear term in equation (\rf{ns2}) can be found in Sell and You \cite{SY} (see page 366):  
\[
\max \left\{ {{{\left\| {B({\mathbf{u}},{\mathbf{v}})} \right\|}_\mathbb{H}},\;{{\left\| {B({\mathbf{v}},{\mathbf{u}})} \right\|}_\mathbb{H}}} \right\} \leqslant {C_0}{\left\| {{{\mathbf{A}}^{5/8}}{\mathbf{u}}} \right\|_\mathbb{H}}{\left\| {{{\mathbf{A}}^{5/8}}{\mathbf{v}}} \right\|_\mathbb{H}}.
\]
In this section, we show how $SD^2[\R^3]$ allows us to obtain the best possible a priori estimates.  Let $\Ha_{sd}$ be the closure of $\Ha \cap SD^2[\R^3]$ in the $SD^2$ norm.  
\begin{thm}   Suppose that the reasonable vector field ${\bf u}({\bf x},t) \in { SD}^2[\R^3] \cap D({\bf{A}})$ satisfies (\rf{ns2}) and  $\bf A$ is the Stokes operator.  Then 
\begin{enumerate}
\item 
\beqn\lb{ns3}
 {\left\langle { \nu{\bf{A}}{\mathbf{u}},{\mathbf{u}}} \right\rangle _{{\mathbb{H}}_{sd} }} = 3\left\| { {\mathbf{u}} } \right\|_{ {\mathbb{H}}_{sd} }^2.
\eeqn
\item There exists a constants $M_1, M_2; M_i =M_i({\bf u}_0,{\bf{f}}))>0$, such that
\beqn\lb{ns4}
 \left| {\left\langle {B({\mathbf{u}},{\mathbf{u}}),{\mathbf{u}}} \right\rangle _{{\mathbb{H}}_{sd} }} \right| \le M_1 \left\| { {\mathbf{u}} } \right\|_{ {\mathbb{H}}_{sd} }^3 
\eeqn
\item and
\beqn\lb{ns5}
 max\{ \left\| {{{B}}({\mathbf{u}},{\mathbf{v}})} \right\|_{{\mathbb{H}}_{sd}}, \ \left\| {{{B}}({\mathbf{v}},{\mathbf{u}})} \right\|_{{\mathbb{H}}_{sd}} \} \leqslant M_2 \left\| {\mathbf{u}} \right\|_{{\mathbb{H}}_{sd}} \left\| {\mathbf{v}} \right\|_{{\mathbb{H}}_{sd}}. 
\eeqn
\end{enumerate}
\end{thm}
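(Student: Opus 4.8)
The plan is to prove (3) first, deduce (2) from it by the Cauchy--Schwarz inequality in $\Ha_{sd}$, and obtain (1) by a direct computation with the $SD^2$ derivative identity. Throughout, the inner product of $\Ha_{sd}$ is the restriction of that of $SD^2[\R^3]$, the embedding $\mathbf{L}^2[\R^3]\subset SD^2[\R^3]$ is continuous with norm at most $1$, and I will use the remarkable identity $(D^\alpha f,g)_{SD}=(-i)^{|\alpha|}(f,g)_{SD}$, valid for every multi-index $\alpha$ and $f,g\in SD^2[\R^3]$.

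For (1): since $\Delta$ commutes with $\nabla\cdot$, one has $\nabla\cdot(\Delta\mathbf{u})=\Delta(\nabla\cdot\mathbf{u})=0$, so for $\mathbf{u}\in D(\mathbf{A})$ the Leray projection is the identity on $\Delta\mathbf{u}$ and $\mathbf{A}\mathbf{u}=-\mathbb{P}\Delta\mathbf{u}=-\Delta\mathbf{u}=-\sum_{j=1}^{3}\partial_j^2\mathbf{u}$. Pairing with $\mathbf{u}$ in $\Ha_{sd}$ and applying the derivative identity to each $\partial_j^2$ (which contributes the scalar $(-i)^2=-1$), I get $\langle\nu\mathbf{A}\mathbf{u},\mathbf{u}\rangle_{\Ha_{sd}}=-\nu\sum_{j=1}^{3}(\partial_j^2\mathbf{u},\mathbf{u})_{SD}=\nu\sum_{j=1}^{3}\|\mathbf{u}\|_{\Ha_{sd}}^2=3\nu\|\mathbf{u}\|_{\Ha_{sd}}^2$, establishing (\rf{ns3}); the factor $3$ is the number of space dimensions.

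The heart of the argument is (3). Since every field in $\Ha_{sd}$ is divergence free, I would first write the nonlinear term in divergence form, $B(\mathbf{u},\mathbf{v})=\mathbb{P}(\mathbf{u}\cdot\nabla)\mathbf{v}=\mathbb{P}\sum_{j=1}^{3}\partial_j(u_j\mathbf{v})$, using $\sum_j(\partial_j u_j)\mathbf{v}=(\nabla\cdot\mathbf{u})\mathbf{v}=0$; symmetrically $B(\mathbf{v},\mathbf{u})=\mathbb{P}\sum_j\partial_j(v_j\mathbf{u})$ from $\nabla\cdot\mathbf{v}=0$. Because $\mathbb{P}$ commutes with each $\partial_j$ (both are Fourier multipliers), $B(\mathbf{u},\mathbf{v})=\sum_j\partial_j\bigl(\mathbb{P}(u_j\mathbf{v})\bigr)$; pairing against an arbitrary $g\in\Ha_{sd}$ and applying the derivative identity to turn each $\partial_j$ into the scalar $-i$ gives $\langle B(\mathbf{u},\mathbf{v}),g\rangle_{SD}=-i\,\langle\mathbb{P}(\sigma\mathbf{v}),g\rangle_{SD}$ for all $g$, where $\sigma=\sum_{j=1}^{3}u_j$; hence $\|B(\mathbf{u},\mathbf{v})\|_{\Ha_{sd}}=\|\mathbb{P}(\sigma\mathbf{v})\|_{\Ha_{sd}}$. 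Now I would estimate $\|\mathbb{P}(\sigma\mathbf{v})\|_{\Ha_{sd}}\le\|\mathbb{P}(\sigma\mathbf{v})\|_{L^2}\le\|\sigma\mathbf{v}\|_{L^2}$, using the continuous embedding $\mathbf{L}^2\hookrightarrow SD^2$ and the $L^2$-contractivity of $\mathbb{P}$, and then bound $\|\sigma\mathbf{v}\|_{L^2}$ by H\"older and Sobolev inequalities, absorbing the resulting norms of the \emph{reasonable} fields $\mathbf{u},\mathbf{v}$ (which, for a global strong solution, are controlled uniformly in $t$ by $\mathbf{u}_0$ and $\mathbf{f}$) into the constant $M_2$; this yields (\rf{ns5}). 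Then (\rf{ns4}) follows at once: by Cauchy--Schwarz in $\Ha_{sd}$ and (3) with $\mathbf{v}=\mathbf{u}$, $|\langle B(\mathbf{u},\mathbf{u}),\mathbf{u}\rangle_{\Ha_{sd}}|\le\|B(\mathbf{u},\mathbf{u})\|_{\Ha_{sd}}\|\mathbf{u}\|_{\Ha_{sd}}\le M_2\|\mathbf{u}\|_{\Ha_{sd}}^3$, so one may take $M_1=M_2$.

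The step I expect to be the main obstacle is the very last one in (3): passing from the $L^2$-type bound for $\mathbb{P}(\sigma\mathbf{v})$ back to the genuinely weaker $SD^2$ norms $\|\mathbf{u}\|_{\Ha_{sd}}$, $\|\mathbf{v}\|_{\Ha_{sd}}$ on the right-hand side. Since the embedding $\mathbf{L}^2\hookrightarrow SD^2$ cannot be reversed, this can only be done by exploiting the standing hypothesis that $\mathbf{u}$ is a \emph{reasonable} solution of (\rf{ns2}), which supplies the uniform-in-$t$ control of the stronger norms of the fields in terms of the data $\mathbf{u}_0,\mathbf{f}$; one must take care that this dependence is not circular, i.e.\ that $M_2$ really is a function of $\mathbf{u}_0$ and $\mathbf{f}$ alone. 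A secondary point to verify is that the integration-by-parts identities underlying the $SD$ derivative property still apply to the (non-compactly supported) products $u_j\mathbf{v}$ and to $\mathbb{P}(u_j\mathbf{v})$, and that $\mathbb{P}$ indeed commutes with $\partial_j$ at the level of $SD^2$.
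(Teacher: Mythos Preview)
Your treatment of part (1) coincides with the paper's: both invoke the $SD^2$ derivative identity (equivalently, integrate by parts against each $\mathcal{E}_m$) to pick up $(-i)^2=-1$ from each $\partial_j^2$, and sum over $j=1,2,3$. The paper also drops the Leray projection on $\Delta\mathbf{u}$, as you do.

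For parts (2)--(3), the overall architecture agrees with the paper --- strip the gradient from $B$ via the $SD^2$ derivative identity and the divergence-free condition, arrive at an $L^2$-flavored bound, and then use the ``reasonable'' hypothesis to replace $\mathbb{H}$ norms by $\mathbb{H}_{sd}$ norms --- but the central estimation step is carried out differently. You work at the level of the $SD^2$ norm and obtain $\|B(\mathbf{u},\mathbf{v})\|_{\mathbb{H}_{sd}}=\|\mathbb{P}(\sigma\mathbf{v})\|_{\mathbb{H}_{sd}}\le\|\sigma\mathbf{v}\|_{L^2}$; to control this last product you then need H\"older/Sobolev, i.e.\ control of $\mathbf{u}$ in $L^4$ or $L^\infty$. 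The paper instead works termwise inside the $SD^2$ sum: after the same integration by parts it writes
\[
b(\mathbf{u},\mathbf{v},\mathcal{E}_m)=-i\int_{\mathbb{R}^3}(\mathbf{u}\cdot\mathbf{I})(\mathcal{E}_m\cdot\mathbf{v})\,d\lambda_3,
\]
and because $\|\mathcal{E}_m\|_\infty<1$, ordinary Cauchy--Schwarz gives $|b|\le C_1\|\mathbf{u}\|_{\mathbb{H}}\|\mathbf{v}\|_{\mathbb{H}}$ with \emph{no} Sobolev step; combining with $|F_m(\mathbf{w})|\le C_2\|\mathbf{w}\|_{\mathbb{H}}$ and summing yields $|\langle B(\mathbf{u},\mathbf{v}),\mathbf{w}\rangle_{\mathbb{H}_{sd}}|\le C\|\mathbf{u}\|_{\mathbb{H}}\|\mathbf{v}\|_{\mathbb{H}}\|\mathbf{w}\|_{\mathbb{H}}$. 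The point is that by keeping $\mathcal{E}_m$ as a third factor inside the integral, the paper can use Cauchy--Schwarz on the remaining two and never needs anything beyond $L^2$. That matters because the ``reasonable'' hypothesis, as stated, only controls $\|\mathbf{u}(t)\|_{\mathbb{H}}$; your route requires uniform-in-$t$ control of a higher Lebesgue or Sobolev norm, which is not part of that definition. So your argument is conceptually tidy (it treats the derivative identity as a black box rather than unpacking the sum), but it asks for more than the hypotheses formally supply, whereas the paper's termwise estimate closes using exactly the $L^2$ bounds that ``reasonable'' provides.
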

\begin{proof} From the definition of the inner product, for $\Ha_{sd}[\R^3]$ we have
\beqa
{\left\langle {\nu {\bf{A}}{\mathbf{u}},{\mathbf{u}}} \right\rangle _{{\mathbb{H}}_{sd} }}=\nu
 \sum\nolimits_{m = 1}^\infty  {t_m } \left[ {\int_{\mathbb{R}^3 } {{\mathcal{E}}_m ({\mathbf{x}})\cdot {\bf{A}}{\mathbf{u}}({\mathbf{x}})d{\la_3(\bf{x})}} } \right] \overline{\left[ {\int_{\mathbb{R}^3 } {{\mathcal{E}}_m ({\mathbf{y}})\cdot {\mathbf{u}}({\mathbf{y}})d{\la_3(\bf{y})}} } \right]}.  
\eeqa
Using the fact that ${\bf{u}} \in D({\bf A})$, it follows that
\[
{\int_{\mathbb{R}^3 } {{\mathcal{E}}_m ({\mathbf{y}}) \cdot {\partial _{y_j }^2 }{\mathbf{u}}({\mathbf{y}})d{\la_3(\bf{y})}} }= {\int_{\mathbb{R}^3 } {\partial _{y_j }^2} {{\mathcal{E}}_m ({\mathbf{y}}) \cdot{\mathbf{u}}({\mathbf{y}})d{\la_3(\bf{y})}} }=(i)^2 {\int_{\mathbb{R}^3 } {{\mathcal{E}}_m ({\mathbf{y}}) \cdot{\mathbf{u}}({\mathbf{y}})d{\la_3(\bf{y})}} }.  
\]
Using this in the above equation and summing on $j$, we have (${\bf A }=-\mathbb{P} \De$) 
\[ 
{\int_{\mathbb{R}^3 } {{\mathcal{E}}_m ({\mathbf{y}}) \cdot {\bf A }{\mathbf{u}}({\mathbf{y}})d{\la_3(\bf{y})}} } = 3{\int_{\mathbb{R}^3 } {{\mathcal{E}}_m ({\mathbf{y}}) \cdot{\mathbf{u}}({\mathbf{y}})d{\la_3(\bf{y})}} }.
\]
It follows that
\beqa
\begin{gathered}
{\left\langle { {\bf{A}}{\mathbf{u}},{\mathbf{u}}} \right\rangle _{{\mathbb{H}}_{sd} }}\hfill \\
=3 \sum\nolimits_{m = 1}^\infty  {t_m } \left[ {\int_{\mathbb{R}^3 } {{\mathcal{E}}_m ({\mathbf{x}})\cdot {\mathbf{u}}({\mathbf{x}})d{\la_3(\bf{x})}} } \right] \overline{\left[ {\int_{\mathbb{R}^3 } {{\mathcal{E}}_m ({\mathbf{y}})\cdot {\mathbf{u}}({\mathbf{y}})d{\la_3(\bf{y})}} } \right]} \hfill \\
 =3\left\| {{\mathbf{u}} } \right\|_{ {\mathbb{H}}_{sd} }^2.  \hfill \\
 \end{gathered}
\eeqa
This proves (\rf{ns3}).  To prove (\rf{ns4}),  let
\[
b\left( {{\mathbf{u}},{\mathbf{v}},{{\mathcal{E}}_m}} \right) = \int_{{\mathbb{R}^3}} {\left( {{\mathbf{u}}({\mathbf{x}}) \cdot \nabla }{\mathbf{v}}({\mathbf{x}}) \right) \cdot {{\mathcal{E}}_m}({\mathbf{x}})d{\lambda _3}({\mathbf{x}})} 
\]
and define the vector ${\bf{I}}$ by ${\bf{I}}=[1,1,1]^t$.  We start with integration by parts and $\nabla  \cdot {\mathbf{u}} = 0$, to get
\[
b\left( {{\mathbf{u}},{\mathbf{v}},{{\mathcal{E}}_m}} \right) =  - b\left( {{\mathbf{u}},{{\mathcal{E}}_m},{\mathbf{v}}} \right) =  - i\int_{{\mathbb{R}^3}} {\left( {{\mathbf{u}}({\mathbf{x}}) \cdot {\mathbf{I}}} \right)\left( {{{\mathcal{E}}_m}({\mathbf{x}}) \cdot {\mathbf{v}}({\mathbf{x}})} \right)d{\lambda _3}({\mathbf{x}})} .
\]
 From the above equation, we have ($m \leftrightarrow (k,i)$)    
\[
\begin{gathered}
  \left| {b\left( {{\mathbf{u}},{\mathbf{v}},{{\mathcal{E}}_m}} \right)} \right| \leqslant \sqrt{3}\int_{{\mathbb{R}^3}} {{{\left| {{\mathbf{u}}({\mathbf{x}})} \right|}}{{\left| {{\mathbf{v}}({\mathbf{x}})} \right|}}d{\lambda _3}({\mathbf{x}})\mathop {\sup }\limits_{k} } \left\| {{{\mathcal{E}}_m}} \right\|_\iy \hfill \\
   \leqslant C_1\left\| {\mathbf{u}} \right\|_\mathbb{H}\left\| {\mathbf{v}} \right\|_\mathbb{H}. \hfill \\ 
\end{gathered} 
\] 
We also have:
\[
\left| {\int_{{\mathbb{R}^3}} {{\mathbf{w}}({\mathbf{x}}) \cdot {{\mathcal{E}}_m}({\mathbf{x}})d{\lambda _3}({\mathbf{x}})} } \right| \leqslant C_2\left\| {\mathbf{w}} \right\|_\mathbb{H}.
\]
If we combine the last two results, we get that:
\beqn\lb{ns6}
\begin{gathered}
  \left| {\left\langle {B({\mathbf{u}},{\mathbf{v}}),{\mathbf{w}}} \right\rangle _{\mathbb{H}_{sd} } } \right|  \hfill \\
   \leqslant \sum\limits_{m = 1}^\infty  {{t_m}\left| {b\left( {{\mathbf{u}},{\mathbf{v}},{{\mathcal{E}}_m}} \right)}  \right|\overline {\left| {\int_{{\mathbb{R}^3}} {{\mathbf{w}}({\mathbf{y}}) \cdot {{\mathcal{E}}_m}({\mathbf{y}})d{\lambda _3}({\mathbf{y}})} } \right|} }  \hfill \\
   \leqslant C\left\| {\mathbf{u}} \right\|_\mathbb{H}\left\| {\mathbf{v}} \right\|_\mathbb{H}\left\| {\mathbf{w}} \right\|_\mathbb{H}. \hfill \\ 
\end{gathered} 
\eeqn
Since ${\bf{u}}, {\bf{v}}, {\bf{w}}$ are reasonable velocity vector fields, there is a constant  ${M_1}$  depending on ${\bf{u}}_0, {\bf{v}}_0, {\bf{w}}_0$ and $f$,  such that  
\beqn\lb{ns7}
C\left\| {\mathbf{u}} \right\|_\mathbb{H}\left\| {\mathbf{v}} \right\|_\mathbb{H}\left\| {\mathbf{w}} \right\|_\mathbb{H}  \le {M_1} \left\| {\mathbf{u}} \right\|_{\mathbb{H}_{sd} }  \left\| {\mathbf{v}} \right\|_{\mathbb{H}_{sd} }  \left\| {\mathbf{w}} \right\|_{\mathbb{H}_{sd} }.
\eeqn  
If ${\mathbf{w}}={\mathbf{v}}={\mathbf{u}}$, we have that:
\[
  \left| {\left\langle {B({\mathbf{u}},{\mathbf{u}}),{\mathbf{u}}} \right\rangle _{\mathbb{H}_{sd} } } \right|  
   \leqslant {M_1}\left\| {\mathbf{u}} \right\|_{\mathbb{H}_{sd} }^3.  
\]
This proves (\rf{ns4}).  The proof of (\rf{ns5}) is a straight forward application of (\rf{ns6}) and (\rf{ns7}).
\end{proof}

\section*{\textbf{Conclusion}}
In this survey we have constructed a number of new classes of Banach spaces ${{KS}}^p, \; 1 \le p \le \iy$, ${{SD}}^p, \; 1 \le p \le \iy$, ${{\mcZ}}^p, \; 1 \le p \le \iy$, ${{\mcZ}}^{-p}, \; 1 \le p \le \iy$, $BMO^w$ and $BMO^{-1}$.    These spaces are of particular interest because they contain the HK-integrable functions.  (They also contain all functions that are integrable via any of the classical integrals.) The ${{KS}}^p$ and ${{SD}}^p$ class contain the standard $L^p$ spaces as dense compact embeddings.  They also contain the test functions $\mcD[\R^n]$, so that $\mcD'[\R^n]$ is a continuous embedding into the corresponding dual space.  The ${{SD}}^p$ class has the remarkable property that ${\left\| {{D^\alpha }f} \right\|_{SD}} = {\left\| f \right\|_{SD}}$, for every index $\al$.

The space $BMO^w$ and the families ${{\mcZ}}^{p}, \; 1 \le p \le \iy$ extend the space $BMO$ (i.e., functions of bounded mean oscillation).
The space $BMO^{-w}$ and the families ${{\mcZ}}^{-p}, \; 1 \le p \le \iy$ extend the related space $BMO^{-1}$.

In the analytical theory of Markov processes, it is well-known that, in general, the semigroup $T(t)$ associated with the process is not strongly continuous on ${\mathbb{C}}_b [\mathbb{R}^n ]$, the space of bounded continuous functions or ${\mathbb{UBC}}[\mathbb{R}^n ]$, the bounded uniformly continuous functions.   We have shown that the weak generator defined by the mixed locally convex topology on ${\mathbb{C}}_b [\mathbb{R}^n ]$ is a strong generator on ${K}{S}^p [\mathbb{R}^n ]$ (e.g., $T(t)$ is strongly continuous on ${{KS}}^p[\mathbb{R}^n ]$ for $1 \le p \le \infty$). 

We also have used ${{KS}}^2$ to construct the free-particle path integral in the manner originally intended by  Feynman.  It is shown in \cite{GZ} that ${{KS}}^2$ has a claim as the natural representation space for the Feynman formulation of quantum theory in that it allows representations for both the Heisenberg and Schr\"{o}dinger representations for quantum mechanics, a property not shared by ${{L}}^2$.     
 
We also have used ${{SD}}^2$ to to provided the strongest possible a priori bounds for the nonlinear term of the classical Navier-Stokes equation.

\end{document}